\documentclass[12pt,reqno]{amsart}

\usepackage[all]{xypic}
\usepackage{enumerate}
\usepackage{hyperref}
\usepackage{a4}

\DeclareMathOperator*{\res}{res}
\DeclareMathOperator{\sign}{sign}
\DeclareMathOperator{\spec}{spec}
\DeclareMathOperator{\img}{img}
\DeclareMathOperator{\rk}{rk}
\DeclareMathOperator{\tr}{tr}
\DeclareMathOperator{\Li}{Li}
\DeclareMathOperator*{\FP}{FP}
\DeclareMathOperator{\Td}{Td}
\DeclareMathOperator{\ch}{ch}
\DeclareMathOperator{\Tch}{Tch}
\newcommand\grr{\textrm{gr}}
\newcommand\Hurw{\mathrm{Hurw}}
\newcommand\Riem{\mathrm{Riem}}
\newcommand\triv{\mathrm{triv}}
\newcommand\goe{\mathfrak g}
\newcommand\noe{\mathfrak n}
\newcommand\zoe{\mathfrak z}

\DeclareMathOperator{\GL}{GL}
\DeclareMathOperator{\OO}{O}
\DeclareMathOperator{\Aut}{Aut}
\DeclareMathOperator{\eend}{end}

\theoremstyle{plain}
  \newtheorem{theorem}{Theorem}
  \newtheorem{corollary}{Corollary}
  \newtheorem{lemma}{Lemma}
  \newtheorem{proposition}{Proposition}
\theoremstyle{definition}

\theoremstyle{remark}
  \newtheorem{remark}{Remark}

\begin{document}

\title{On the eta invariant of (2,3,5) distributions}

\author{Stefan Haller}

\address{Stefan Haller,
         Department of Mathematics,
         University of Vienna,
         Oskar-Morgenstern-Platz 1,
         1090 Vienna,
         Austria.
	 \href{https://orcid.org/0000-0002-7064-2215}{ORCID 0000-0002-7064-2215}}

\email{stefan.haller@univie.ac.at}

\begin{abstract}
	We consider the Rumin complex associated with a generic rank two distribution on a closed 5-manifold.
	The Rumin differential in middle degrees gives rise to a self-adjoint differential operator of Heisenberg order two.
	We study the eta function and the eta invariant of said operator, twisted by unitary flat vector bundles.
	For (2,3,5) nilmanifolds this eta invariant vanishes but the eta function is nontrivial, in general.
	We establish a formula expressing the eta function of (2,3,5) nilmanifolds in terms of more elementary functions.
\end{abstract}

\keywords{Eta invariant; Rumin complex; Generic rank two distribution in dimension five; (2,3,5) distribution; Sub-Riemannian geometry; Nilmanifold}

\subjclass[2010]{58J28 (primary) and 11M35, 11M41, 53C17, 58A30, 58J10, 58J42}


\maketitle

\section{Introduction}\label{S:intro}

	The eta invariant of the odd signature operator has been introduced by Atiyah, Patodi, and Singer \cite{APS73,APS75} 
	in order to extend the Hirzebruch signature theorem to compact manifolds with boundary.
	The eta invariant of a contact analogue of the odd signature operator has been studied by Biquard, Herzlich, and Rumin \cite{BHR07,R00}.
	The latter is a pseudohermitian invariant, that is, it depends on a contact form and an almost complex structure on the contact hyperplane field.
	In dimension three, and up to a local quantity, the contact eta invariant coincides \cite[Thm~1.1]{BHR07} with the $\nu$-invariant introduced by Biquard and Herzlich \cite{BH05} 
	which is a CR-invariant, i.e., independent of the contact form.
	More recently, Albin and Quan have shown that on contact manifolds of arbitrary dimension the relative contact eta invariant coincides with the Riemannian analogue \cite[Cor~2]{AQ22}.

	In this paper we study the eta invariant of generic rank two distributions on closed 5-manifolds.
	Recall that a generic rank two distribution in dimension five, a.k.a.\ (2,3,5) distribution, is a rank two subbundle $\mathcal D\subseteq TM$ in the tangent bundle of a smooth 5-manifold $M$
	such that Lie brackets of sections of $\mathcal D$ span a rank three subbundle $[\mathcal D,\mathcal D]\subseteq TM$ 
	and triple brackets of sections of $\mathcal D$ span all of the tangent bundle, $[\mathcal D,[\mathcal D,\mathcal D]]=TM$.
	Clearly, this is a $C^2$-open condition on $\mathcal D$, whence the name \emph{generic rank two distribution.}
	Such a distribution gives rise to a filtration of the tangent bundle by smooth subbundles,
	\begin{equation}\label{E:TM.filt}
		TM\supseteq[\mathcal D,\mathcal D]\supseteq\mathcal D.
	\end{equation}
	The ranks of these filtration bundles are 2, 3, and 5, respectively, whence the name \emph{(2,3,5) distribution.}
	We will denote the associated graded bundle by 
	\begin{equation}\label{E:tM}
		\mathfrak tM=\mathfrak t_{-3}M\oplus\mathfrak t_{-2}M\oplus\mathfrak t_{-1}M
	\end{equation}
	where $\mathfrak t_{-1}M=\mathcal D$, $\mathfrak t_{-2}M=\frac{[\mathcal D,\mathcal D]}{\mathcal D}$, and $\mathfrak t_{-3}M=\frac{TM}{[\mathcal D,\mathcal D]}$.
	The Lie bracket of vector fields induces the structure of a graded nilpotent Lie algebra on the fibers of $\mathfrak tM$.
	Each fiber is isomorphic to the graded nilpotent Lie algebra
	\begin{equation}\label{E:goe.intro}
		\goe=\goe_{-3}\oplus\goe_{-2}\oplus\goe_{-1}	
	\end{equation}
	with graded basis $X_1,X_2\in\goe_{-1}$, $X_3\in\goe_{-2}$, $X_4,X_5\in\goe_{-3}$ and nontrivial brackets
	\begin{equation}\label{E:brackets.intro}
		[X_1,X_2]=X_3,\quad[X_1,X_3]=X_4,\quad[X_2,X_3]=X_5.
	\end{equation}
	This turns $\mathfrak tM$ into a locally trivial bundle of Lie algebras called the bundle of osculating algebras.
	The fiberwise Lie bracket provides canonical isomorphisms
	\begin{equation}\label{E:tM123}
		\Lambda^2\mathfrak t_{-1}M=\mathfrak t_{-2}M
		\quad\text{and}\quad
		\mathfrak t_{-1}M\otimes\mathfrak t_{-2}M=\mathfrak t_{-3}M.
	\end{equation}
	
	We let $G$ denote the simply connected Lie group with Lie algebra $\goe$.
	The left invariant distribution $\mathcal D_G$ spanned by the vector fields $X_1$ and $X_2$ provides a basic example of a (2,3,5) distribution on $G$.
	Up to an (in general ungraded) automorphism every left invariant (2,3,5) distribution on $G$ is of this form, cf.~\cite[Lemma~4.1]{H22}.

	The study of generic rank two distributions in dimension five reaches back to the work of \'Elie Cartan.
	In his seminal paper \cite{C10} Cartan constructed what is nowadays known as canonical Cartan connection.
	This leads to an equivalent description of a (2,3,5) distribution 
	as a parabolic geometry \cite{CS09} associated with the split real form of the exceptional Lie group $G_2$ and a particular parabolic subgroup, see also \cite[\S4.3.2]{CS09} and \cite{S08}.
	In particular, (2,3,5) distributions do have local geometry --- unlike contact and Engel structures \cite{P16}.
	Cartan's harmonic curvature tensor, $\kappa_H\in\Gamma(S^4\mathcal D^*)$, provides a complete obstruction to local flatness, 
	i.e., a $(2,3,5)$ distribution is locally diffeomorphic to the left invariant distribution $\mathcal D_G$ on $G$ if and only if $\kappa_H=0$.
	Generic rank two distributions in dimension five have attracted quite some attention recently, 
	cf.~\cite{AN14,AN18,BHN18,BM09,BH93,CN09,CSa09,DH19,GPW17,H22,H23a,H23b,HS09,HS11,LNS17,N05,S06,S08,SW17,SW17b}.

	The Rumin complex \cite{R99,R01,R05} associated with a (2,3,5) distribution is sequence of natural differential operators 
	\[
		\cdots\to\Gamma\bigl(\mathcal H^q(\mathfrak tM)\bigr)\xrightarrow{D_q}\Gamma\bigl(\mathcal H^{q+1}(\mathfrak tM)\bigr)\to\cdots
	\]
	satisfying $D_{q+1}D_q=0$ and computing de~Rham cohomology. 
	Here the vector bundles $\mathcal H^q(\mathfrak tM)$ are obtained from the bundle of osculating algebras $\mathfrak tM$
	by passing to fiberwise Lie algebra cohomology with trivial real coefficients.
	There exist (non-canonical, in this formulation) injective differential operators $L_q\colon\Gamma(\mathcal H^q(\mathfrak tM))\to\Omega^q(M)$, 
	intertwining the Rumin differential with the de~Rham differential and inducing canonical isomorphisms in cohomology.
	Rumin has shown that this complex becomes exact on the level of the Heisenberg principal symbol.
	Hence, the Rumin complex is a Rockland \cite{R78} complex \cite[\S2.3]{DH22}, the analogue of an elliptic complex in the Heisenberg calculus \cite{M82,EY19}.

	A fiberwise graded Euclidean inner product $g$ on $\mathfrak tM$ and an orientation $\mathfrak o$ of $M$ give rise to a Hodge star operator 
	\[
		\star_q=\star_{\mathcal D,g,\mathfrak o,q}^M\colon\mathcal H^q(\mathfrak tM)\to\mathcal H^{5-q}(\mathfrak tM)
	\]
	see \cite[\S2]{R99}, \cite[Prop~2.8]{R01}, or \cite[\S3.3]{H22}.
	Let $F$ be a unitary flat vector bundle over $M$, i.e., a flat complex vector bundle that admits a parallel fiberwise Hermitian metric $h$.
	Twisting $\mathbf i\star_3 D_2$ with $F$ we obtain an operator 
	\begin{equation}\label{E:S.def.intro}
		S=S_{\mathcal D,g,\mathfrak o}^{M,F}=\bigl(\mathbf i\star_3D_2\bigr)^F
	\end{equation}
	acting on sections of $\mathcal H^2(\mathfrak tM)\otimes F$.
	This operator is formally self-adjoint with respect to the standard $L^2$ inner product provided by $g$, $h$, and the volume density on $M$ associated with $g$ via the canonical isomorphism of lines bundles
	\begin{equation}\label{E:tTM5}
		\Lambda^5TM=\Lambda^5\mathfrak tM.
	\end{equation}
	Note that $S$ does not depend on the choice of $h$.
	
	\begin{theorem}\label{T:eta.S.hinv}
		Let $\mathcal D$ be a generic rank two distribution on a closed 5-manifold $M$,
		let $g$ be a fiberwise graded Euclidean inner product on $\mathfrak tM$,
		let $\mathfrak o$ be an orientation of $M$,
		and let $F$ be a unitary flat vector bundle over $M$.
		Then the associated operator $S=S_{\mathcal D,g,\mathfrak o}^{M,F}$ in~\eqref{E:S.def.intro} is essentially self-adjoint with infinite dimensional kernel.
		The non-zero part of its spectrum, $\spec_*(S)$, consists of isolated real eigenvalues with finite multiplicities only.
		The eta function 
		\begin{equation}\label{E:etaS.intro}
			\eta_S(s):=\sum_{\lambda\in\spec_*(S)}\sign(\lambda)|\lambda|^{-s}
		\end{equation}
		converges absolutely for $\Re s>5$.
		This function extends to an entire function on the complex plane that vanishes at negative odd integers, i.e.,
		\begin{equation}\label{E:eta.S.odd}
			\eta_S(-2l-1)=0,\qquad l=0,1,2,\dotsc
		\end{equation}
		Moreover, $\eta_S(0)$ is independent of $g$ and constant on the connected component of $\mathcal D$ in the space of generic rank two distributions on $M$.
	\end{theorem}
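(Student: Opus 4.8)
The plan is to analyze $S$ entirely within the Heisenberg pseudodifferential calculus \cite{EY19}, where, by the Rockland property of the Rumin complex \cite{R78,R99}, it is a formally self-adjoint operator of Heisenberg order two with invertible Heisenberg symbol off its kernel. First I would dispose of the spectral statements. The Laplacian-type operator $S^2$ is a non-negative Rockland operator of Heisenberg order four; the parametrix construction in the calculus makes it maximally hypoelliptic and shows that its resolvent is compact on the orthogonal complement of $\ker S^2=\ker S$. This yields essential self-adjointness of $S$ and identifies $\spec_*(S)$ as a set of isolated real eigenvalues of finite multiplicity. The kernel is infinite dimensional because $\star_3$ is invertible, so $\ker S=\ker D_2$; as $D_{2}D_{1}=0$ this kernel contains the infinite-dimensional image of the preceding Rumin differential $D_1$, the only finite-dimensional complement being the de~Rham cohomology $H^2(M;F)$.

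Next I would encode $\eta_S$ through the heat semigroup. Since $S$ vanishes on its kernel, the nonzero spectrum controls everything through
\[
	\eta_S(s)=\frac1{\Gamma(\frac{s+1}2)}\int_0^\infty t^{(s-1)/2}\,\tr\bigl(Se^{-tS^2}\bigr)\,dt ,
\]
the integrand equalling $\sum_{\lambda\in\spec_*(S)}\lambda e^{-t\lambda^2}$. The Weyl law for the order-four Rockland operator $S^2$, with $Q=10$ the homogeneous dimension of the osculating algebra $\goe$, gives $N_S(\lambda)\sim c\,\lambda^{Q/2}=c\,\lambda^{5}$, whence absolute convergence of the defining sum for $\Re s>5$. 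For large $t$ the integrand decays exponentially by the spectral gap on $(\ker S)^\perp$, so the entire analytic behavior of $\eta_S$ is governed by the small-$t$ asymptotics of $\tr(Se^{-tS^2})$, which the Heisenberg heat calculus expands as $\sum_k c_k\,t^{(k-Q-2)/4}$ with each $c_k=\int_M\omega_k$ the integral of a universal local density built from the full symbol of $S$. The Mellin integral then continues meromorphically with at worst simple poles at $s=5-k/2$ and residues proportional to $c_k$.

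The crux, and the step I expect to be the main obstacle, is to show that in fact $\tr(Se^{-tS^2})=O(t^\infty)$ as $t\to0^+$, i.e.\ that every local density $\omega_k$ vanishes; granting this, the small-$t$ integral becomes entire in $s$, the whole Mellin integral is entire, and dividing by the zero-free function $\Gamma(\frac{s+1}2)$ shows $\eta_S$ is entire, while the poles of $\Gamma(\frac{s+1}2)$ at $s=-1,-3,-5,\dotsc$ force $\eta_S(-2l-1)=0$. To prove the vanishing I would use a \emph{parity} argument at the level of the osculating structure: the assignment $X_1\mapsto-X_1$, $X_2\mapsto X_2$, $X_3\mapsto-X_3$, $X_4\mapsto X_4$, $X_5\mapsto-X_5$ is an automorphism of $\goe$ with determinant $-1$ on $\goe_{-3}\oplus\goe_{-2}\oplus\goe_{-1}$, hence orientation reversing. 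Because $S=(\mathbf i\star_3D_2)^F$ flips sign under $\mathfrak o\mapsto-\mathfrak o$ through $\star_3$ while $D_2$ is natural, this symmetry should conjugate the Heisenberg symbol of $Se^{-tS^2}$ to its negative; as the local densities $\omega_k$ are universal expressions in that symbol, evaluated in the osculating model that always carries this symmetry, each $\omega_k$ equals its own negative and thus vanishes. Implementing this parity cancellation through the full symbolic (curvature-dependent) expansion, rather than only for the flat leading term, is the genuinely technical point.

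Finally, for $\eta_S(0)$ I would invoke the standard variation argument. Regularity of the Mellin integral at $s=0$ gives $\eta_S(0)=I(0)/\sqrt\pi$, and for a smooth family $S_u$ obtained by varying $g$, or by moving $\mathcal D$ within a connected component of the space of generic rank two distributions (identifying the bundles $\mathcal H^2(\mathfrak tM)$ along the family), the derivative $\tfrac{d}{du}\eta_{S_u}(0)$ equals, up to a universal constant, the constant term in the small-$t$ expansion of $\tr(\dot S_u e^{-tS_u^2})$. Since $\dot S_u$ again changes sign under the orientation-reversing automorphism above, this local term vanishes by the same parity cancellation, so $\eta_{S_u}(0)$ is locally constant; together with the fact that the excluded harmonic kernel $\ker D_2$ has locally constant size along the family, this yields that $\eta_S(0)$ is independent of $g$ and constant on the connected component of $\mathcal D$.
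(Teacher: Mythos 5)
Your proposal has two genuine gaps, and the first one is fatal to the entire analytic framework you set up. The operator $S^2=D_2^*D_2$ is \emph{not} a Rockland operator: in a nontrivial irreducible unitary representation of the osculating group, the Heisenberg symbol of the Rumin complex is exact but not injective in degree two, so the symbol of $D_2$ (hence of $D_2^*D_2$) has nontrivial kernel, namely the image of the symbol of $D_1$. Indeed, your own (correct) observation that $\ker S\supseteq\img D_1$ is infinite dimensional already contradicts your claim: an operator admitting a parametrix in the Heisenberg calculus on a closed manifold has finite-dimensional kernel. Consequently the parametrix construction, maximal hypoellipticity, compact resolvent on $(\ker S)^\perp$, the Weyl law $N_S(\lambda)\sim c\lambda^{5}$, and above all the small-time expansion of $\tr\bigl(Se^{-tS^2}\bigr)$ are all unjustified as stated. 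The paper circumvents exactly this problem by introducing the Rumin--Seshadri operator $\Delta=(D_1D_1^*)^2+(D_2^*D_2)^3$, which \emph{is} Rockland of Heisenberg order $12$, and by exploiting the identities $S(D_1D_1^*)=0=(D_1D_1^*)S$ and $\Delta=(D_1D_1^*)^2+S^6$: these give $\spec_*(\Delta)=\spec_*(D_1D_1^*)^2\sqcup\spec_*(S^6)$, trace-class of $|S|^{-s}$ for $\Re s>5$, and the representation $\eta_S(s)=\tr\bigl(S^{1+2r}\Delta^{-(s+1+2r)/6}\bigr)$, whose Mellin transform against $\tr\bigl(S^{1+2r}e^{-t\Delta}\bigr)$ (a legitimate heat expansion, since $\Delta$ is Rockland) carries the whole analysis; the freedom in $r\in\mathbb N_0$ is what reaches \emph{all} negative odd integers.

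The second gap is your parity argument for the vanishing of the local heat coefficients. The graded automorphism $X_1\mapsto-X_1$, $X_2\mapsto X_2$, $X_3\mapsto-X_3$, $X_4\mapsto X_4$, $X_5\mapsto-X_5$ is indeed an orientation-reversing automorphism of $\goe$, but the heat coefficient densities are \emph{not} determined by the osculating model: they are universal expressions in the jets of the curved structure $(\mathcal D,g)$, and a generic (2,3,5) distribution has nontrivial local invariants ($\kappa_H\neq0$) and no local symmetries whatsoever, so there is no local diffeomorphism preserving $(\mathcal D,g)$ that realizes your sign flip. Your cancellation therefore only controls the flat-model contribution, which is precisely the ``genuinely technical point'' you flag but do not resolve. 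The paper's mechanism is different and does close this gap: for the \emph{trivial} flat bundle $F_\triv$ the untwisted operator $\star_3D_2$ is real and skew-adjoint, so the spectrum of $S_\triv=\mathbf i\star_3D_2$ is symmetric about zero and $\eta_{S_\triv}$ vanishes \emph{identically} as a function of $s$; via the residue and special-value formulas this forces all coefficients $a_j(S^{1+2r},\Delta)$ (and likewise $a_j(\dot S_u,\Delta_u)$ in the variation) to vanish for $F_\triv$, and since any unitary flat bundle is locally indistinguishable from $F_\triv$ while the coefficients are locally computable, they vanish for every unitary flat $F$. This comparison-with-$F_\triv$ step is the key idea missing from your proposal; your final variation argument for the $g$- and $\mathcal D$-independence of $\eta_S(0)$ is structurally the same as the paper's Proposition on variations, but it inherits both defects above (it needs the heat expansion for $\tr(\dot S_ue^{-t\Delta_u})$ with $\Delta_u$ Rockland, and the trivial-bundle comparison rather than parity, to conclude that the variation vanishes).
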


	In view of Theorem~\ref{T:eta.S.hinv}, the eta invariant 
	\begin{equation}\label{E:eta.S.def.intro}
		\eta(S):=\eta_S(0)
	\end{equation}
	only depends on the diffeomorphism type of $M$, the orientation $\mathfrak o$ of $M$, the unitary flat vector bundle $F$ over $M$,
	and the connected component of $\mathcal D$ in the space of generic rank two distributions on $M$.
	We denote this invariant by
	\begin{equation}\label{E:eta.MFDo.intro}
		\eta_F(M,\mathcal D,\mathfrak o):=\eta(S).
	\end{equation}

	Formally putting $s=0$ is \eqref{E:etaS.intro} we see that $\eta(S)$ is, in a zeta regularized sense, the difference between the number of positive and negative eigenvalues of $S$.
	Hence, $\eta(S)$ may be regarded as a measure for the spectral asymmetry of $S$.

	To state our next result let $\Gamma$ be a (cocompact) lattice in the simply connected nilpotent Lie group $G$ with Lie algebra $\goe$, cf.~\eqref{E:goe.intro}.
	This Lie group admits many different lattices which can all be described explicitly, cf.~\cite[Lemma~2]{H23b}.
	The left invariant (2,3,5) distribution $\mathcal D_G$ on $G$ descends to a (2,3,5) distribution on the nilmanifold $M=\Gamma{\setminus}G$ that will be denoted by $\mathcal D_\Gamma$.
	Moreover, let $\mathfrak o$ be an orientation of $\goe$.
	The corresponding orientation $\mathfrak o_G$ of $G$ descends to an orientation on $\Gamma{\setminus}G$ that will be denoted by $\mathfrak o_\Gamma$.

	\begin{theorem}\label{T:eta.nilmf.entire}
		For any lattice $\Gamma\subseteq G$, either orientation $\mathfrak o$ of $\goe$,
		and every unitary flat vector bundle $F$ over the nilmanifold $M=\Gamma{\setminus}G$ we have
		\begin{equation}\label{E:eta.nilmf.S}
			\eta_F(\Gamma{\setminus}G,\mathcal D_\Gamma,\mathfrak o_\Gamma)=0.
		\end{equation}
	\end{theorem}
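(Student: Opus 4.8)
The plan is to use that $\eta_S(0)$ is independent of $g$ by Theorem~\ref{T:eta.S.hinv}, and to work with the left invariant inner product for which $X_1,\dotsc,X_5$ is orthonormal, so that $S$ becomes a left invariant operator on $\Gamma{\setminus}G$. I would then evaluate its eta function by harmonic analysis on the nilpotent group $G$. Decomposing $L^2(\Gamma{\setminus}G;F)$ into unitary irreducible representations of $G$ by Kirillov theory, the left invariance of $S$ lets it act blockwise through the infinitesimal representations, giving a formal identity
\begin{equation*}
  \eta_S(s)=\sum_\pi m_F(\pi)\,\eta_{d\pi(S)}(s),
\end{equation*}
where $\pi$ runs over the coadjoint orbits and the multiplicities $m_F(\pi)$ are governed by a Poisson type summation over the orbit space twisted by the holonomy of $F$.

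Before computing I would record two symmetries. Since $D_2$ and $\star_3$ are real operators, $\star_3D_2$ is formally skew-adjoint and $S=\mathbf i\star_3D_2$ satisfies $\overline S=-S$; conjugating the twisted operator by the antilinear isomorphism $\mathcal H^2(\mathfrak tM)\otimes F\to\mathcal H^2(\mathfrak tM)\otimes\bar F$ therefore intertwines $S^F$ with $-S^{\bar F}$, so that $\spec_*(S^{\bar F})=-\spec_*(S^F)$ and $\eta_{S^{\bar F}}(s)=-\eta_{S^F}(s)$. In particular \eqref{E:eta.nilmf.S} holds for every self-conjugate $F$, and for the trivial bundle in particular. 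Moreover the graded automorphism $\psi$ of $\goe$ with $\psi(X_1)=-X_1$, $\psi(X_3)=-X_3$, $\psi(X_5)=-X_5$ and $\psi(X_2)=X_2$, $\psi(X_4)=X_4$ is orthogonal, preserves $\mathcal D_G$, and is orientation reversing; integrating it and passing to the quotient (for the lattices described in \cite{H23b}, and in general up to replacing $\Gamma$ by $\psi(\Gamma)$) relates $S$ to $-S$ and yields $\eta_{\Psi^*F}(s)=-\eta_F(s)$. These relations constrain $\eta_S$ but do not by themselves force $\eta_S(0)=0$, since for a generic unitary flat $F$ neither $\bar F$ nor $\Psi^*F$ is isomorphic to $F$; this is consistent with the eta function being nontrivial.

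To finish I would carry out the explicit computation. Using the canonical identifications~\eqref{E:tM123} to write $S$ concretely, for the generic, two-dimensional coadjoint orbits $d\pi(S)$ is an explicit Heisenberg order two operator of harmonic-oscillator type on $L^2(\mathbb R)$, whose spectrum and spectral asymmetry can be determined in closed form as functions of the orbit parameter. Summing $\eta_{d\pi(S)}(s)$ against the multiplicity data and carrying out the analytic continuation should express $\eta_S$ through elementary (Hurwitz zeta and polylogarithm) functions, and I would read off the vanishing~\eqref{E:eta.nilmf.S} by evaluating this expression at $s=0$.

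The main obstacle is this final step. Determining the spectral asymmetry of the model operators $d\pi(S)$ uniformly in the orbit parameter, controlling convergence and analytic continuation of the orbit sum, and isolating the factor responsible for the zero at $s=0$ are the substantive tasks; note that $s=0$ is an even integer and hence is not covered by the vanishing at negative odd integers already furnished by Theorem~\ref{T:eta.S.hinv}. The bookkeeping of the twist $F$, which shifts the occurring orbits and their multiplicities, is a further difficulty, for which the identity $\eta_{S^{\bar F}}=-\eta_{S^F}$ serves as a useful consistency check: the resulting closed form must be odd under $F\mapsto\bar F$ and yet vanish at $s=0$.
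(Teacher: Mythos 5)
Your overall skeleton --- fix a convenient left invariant metric using the $g$-independence from Theorem~\ref{T:eta.S.hinv}, decompose $L^2(\Gamma{\setminus}G;F)$ into irreducible unitary $G$-representations, and evaluate the multiplicity-weighted sum of eta functions at $s=0$ --- is the strategy the paper uses to prove Theorem~\ref{T:eta.nilmf.values}, and from there this theorem. But your proposal has a genuine gap at both ends. At the front end, nothing in your plan handles unitary flat bundles of rank $N>1$ with irreducible nonabelian holonomy $\chi\colon\Gamma\to U(N)$: the explicit multiplicity data (your ``Poisson type summation twisted by the holonomy'') is available for characters, not for such $\chi$. The paper's proof of this theorem consists precisely of the missing reduction: additivity of the eta invariant under direct sums reduces to irreducible $\chi$, and Brown's theorem \cite{B73} says every irreducible $\chi$ is induced from a character $\tilde\chi$ on a finite-index sublattice $\tilde\Gamma\subseteq\Gamma$, which gives $S^{\Gamma{\setminus}G,F_\chi}=S^{\tilde\Gamma{\setminus}G,F_{\tilde\chi}}$, cf.~\cite[Lemma~4.3(b)]{H22}, and hence reduces everything to line bundles, i.e.\ to Theorem~\ref{T:eta.nilmf.values}. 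Without this step your argument cannot reach the general statement.

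At the back end, your computational step misidentifies where closed-form spectra exist. Among the representations on $L^2(\mathbb R)$ there are two families: the Schr\"odinger representations $\rho_\hbar$ (trivial on the center $C$), whose operators $\rho_\hbar(S)$ have the explicit spectrum \eqref{E:spec.S.h}, and the generic representations $\rho_{\lambda,\mu,\nu}$ (nontrivial on $C$), whose operators involve quartic potential terms and admit no closed-form diagonalization. For the latter the paper uses exactly the reflection symmetry you wrote down, but representation-wise: when $b_g$ is proportional to $g|_{\goe_{-1}}$, an orientation-reversing graded isometry $\phi$ satisfies $\rho_{\lambda,\mu,\nu}\circ\phi\sim\rho_{\lambda,\mu,\nu}$, so $\rho(S)$ and $(\rho\circ\phi)(S)=-\rho(S)$ are isospectral and $\eta_{\rho(S)}(s)\equiv0$, see Lemma~\ref{L:eta.generic}(b); you correctly observe that your global symmetries ($F\mapsto\bar F$ and the automorphism $\psi$) are insufficient, but then drop the symmetry idea exactly where it is decisive --- note that when $\chi|_{\Gamma\cap C}$ is nontrivial only generic representations occur, so that entire case is settled by symmetry alone. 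Finally, the vanishing at $s=0$ is not visible term by term: each Schr\"odinger contribution satisfies $\eta_{\rho_\hbar(S)}(0)=\pm\bigl(2-\tfrac{5\sqrt2}4\bigr)\neq0$ by \eqref{E:etaS.rhoh}, and $\eta_S(0)=0$ only emerges because the multiplicity-weighted sum over $\hbar$ continues analytically to $r\cdot\bigl(2\pi/\sqrt{g(\gamma,\gamma)}\bigr)^{-s}\,\tilde\eta\bigl(s,\tfrac54\bigr)\,\eta_\Hurw\bigl(s-1,\tfrac cr\bigr)$, cf.~\eqref{E:eta.S.formula}, whose Hurwitz eta factor vanishes at $s=0$ by \eqref{E:teta.Hurw.odd}. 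This arithmetic cancellation across the Schr\"odinger tower is precisely what you defer as the ``main obstacle,'' so even in the line bundle case the proposal is a program rather than a proof.
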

	
	The results in \cite{APS75,APS75b} imply that the Riemannian analogue, i.e., the eta invariant of the twisted odd signature operator on a (2,3,5) nilmanifold $\Gamma{\setminus}G$ vanishes too.
	This holds for every Riemannian metric on $\Gamma{\setminus}G$ after twisting with any unitary flat vector bundle $F$, see Proposition~\ref{P:APS235} below for details.
	
	For (2,3,5) nilmanifolds $M=\Gamma{\setminus}G$ and fiberwise graded Euclidean inner products of special form the whole eta function $\eta_S(s)$ can be worked out explicitly.
	To this end, let $g$ be a graded Euclidean inner product on $\goe$.
	The corresponding left invariant fiberwise graded Euclidean inner product $g_G$ on $\mathfrak tG$
	descends to a fiberwise graded Euclidean inner product on $\mathfrak t(\Gamma{\setminus}G)$ which we will denote by $g_\Gamma$.
	The graded Euclidean inner product $g$ on $\goe$ determines a number $a_g>0$ and a Euclidean inner product $b_g$ on $\goe_{-1}$ such that
	\begin{align}\label{E:ag}
		g([X,Y],[X,Y])
		&=a_g\cdot\bigl(g(X,X)g(Y,Y)-g(X,Y)^2\bigr)
		\\\label{E:bg}
		g\bigl([X,Z],[Y,Z]\bigr)
		&=b_g(X,Y)\cdot g(Z,Z)
	\end{align}
	for $X,Y\in\goe_{-1}$ and $Z\in\goe_{-2}$.
	In Theorem~\ref{T:eta.nilmf.values} below we identify the whole eta function of the operator $S=S^{\Gamma{\setminus}G,F}_{\mathcal D_\Gamma,g_\Gamma,\mathfrak o_\Gamma}$ explicitly, 
	provided $b_g$ is proportional to $g|_{\goe_{-1}}$.
	To formulate this result we need to introduce more notation.

	For $a\in\mathbb R\setminus\mathbb Z$ we put 
	\begin{equation}\label{E:eta.Hurw}
		\eta_\Hurw(s,a)
		:=\sum_{n=-\infty}^\infty\sign(n+a)|n+a|^{-s}.
	\end{equation}
	Clearly, this is odd and periodic with period one in $a$. 
	If $0<a<1$ then
	\begin{equation}\label{E:eta.zeta.Hurw}
		\eta_\Hurw(s,a)
		=\zeta_\Hurw(s,a)-\zeta_\Hurw(s,1-a)
	\end{equation}
	where
	\begin{equation}\label{E:Hurw}
		\zeta_\Hurw(s,a)=\sum_{n=0}^\infty(n+a)^{-s}
	\end{equation}
	denotes the Hurwitz zeta function \cite[\href{https://dlmf.nist.gov/25.11.i}{\S 25.11(i)}]{NIST:DLMF}.
	Note that $\eta_\Hurw(s,a)$ is an entire function for $\zeta_\Hurw(s,a)$ has a single simple pole at $s=1$ with residue 1.

	Moreover, for $a\notin\{\pm\lambda_n:n=0,1,2,\dots\}$ we put
	\begin{equation}\label{E:tetaa.def}
		\tilde\eta(s,a)
		:=\sum_{n=0}^\infty\sign(a+\lambda_n)|a+\lambda_n|^{-s}
		+\sum_{n=0}^\infty\sign(a-\lambda_n)|a-\lambda_n|^{-s}
	\end{equation}
	with
	\begin{equation}\label{E:lambdan}
		\lambda_n
		:=\frac{\sqrt{8(2n+1)^2+9}}4,
		\qquad n=0,1,2,\dotsc
	\end{equation}
	These values $\lambda_n$ are closely related to the spectrum of the operator $S$ in Schr\"o\-din\-ger representations of $G$, cf.~\eqref{E:spec.S.h} below.
		
	\begin{lemma}\label{L:tetaa}
		The series in \eqref{E:tetaa.def} converge absolutely for $\Re s>1$ and $\tilde\eta(s,a)$ extends to a meromorphic function on the entire complex plane
		that has only simple poles located at $s=-2l$ with $l=1,2,3,\dotsc$ and residues
		\begin{equation}\label{E:tetaa.res}
			\res_{s=-2l}\tilde\eta(s,a)
			=\sqrt2\cdot\sum_{j=0}^{l-1}\binom{2l}{2j+1}\binom{2l-2j}{l-j}\left(\frac9{8\cdot8}\right)^{l-j}a^{2j+1}.
		\end{equation}
		Moreover,
		\begin{equation}\label{E:tetaa.zero}
			\tilde\eta(0,a)=2\sign(a)\cdot\#\bigl\{n\in\mathbb N_0:\lambda_n<|a|\bigr\}-\sqrt2\cdot a
		\end{equation}
		and 
		\begin{equation}\label{E:tetaa.odd}
			\tilde\eta(-2l-1,a)=0,\qquad l=0,1,2,\dots
		\end{equation}
	\end{lemma}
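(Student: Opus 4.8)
The plan is to reduce all four assertions to elementary properties of the Riemann zeta function $\zeta$. First I would record the identity $\sqrt2\,\lambda_n=\sqrt{(2n+1)^2+9/8}$, which is immediate from \eqref{E:lambdan} and gives $\lambda_n=\sqrt2\,n+O(1)$ as $n\to\infty$; hence $|a\pm\lambda_n|^{-s}$ is of size $n^{-\Re s}$ and each of the two series in \eqref{E:tetaa.def} converges absolutely for $\Re s>1$. Since every summand is odd under $a\mapsto-a$, the function $\tilde\eta(s,a)$ is odd in $a$, so I may assume $a>0$. Setting $N:=\#\{n\in\mathbb N_0:\lambda_n<a\}$ and isolating the finitely many indices with $\lambda_n<a$, I would write $\tilde\eta(s,a)=P(s)+g(s)$, where
\[
 P(s):=\sum_{n=0}^{N-1}\bigl[(a+\lambda_n)^{-s}+(a-\lambda_n)^{-s}\bigr]
\]
is entire in $s$ and $g(s):=\sum_{n\ge N}\bigl[(\lambda_n+a)^{-s}-(\lambda_n-a)^{-s}\bigr]$.

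The analytic heart is the auxiliary Dirichlet series $L(w):=\sum_{n\ge0}\lambda_n^{-w}$. Using the binomial series $(1+x)^{-s}-(1-x)^{-s}=2\sum_{j\ge0}\binom{-s}{2j+1}x^{2j+1}$ with $x=a/\lambda_n<1$, truncating at order $J$ and estimating the tail (which is holomorphic for $\Re s>-2J-2$), I would reduce $g$ to
\[
 g(s)=2\sum_{j=0}^{J}\binom{-s}{2j+1}a^{2j+1}\Bigl(L(s+2j+1)-\sum_{n=0}^{N-1}\lambda_n^{-s-2j-1}\Bigr)+G_J(s),
\]
so that the continuation of $\tilde\eta$ is governed entirely by that of $L$. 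A second binomial expansion gives $L(w)=2^{w/2}\sum_{k\ge0}\binom{-w/2}{k}\bigl(\tfrac98\bigr)^k\zeta_{\mathrm{odd}}(w+2k)$, where $\zeta_{\mathrm{odd}}(w):=\sum_{n\ge0}(2n+1)^{-w}=(1-2^{-w})\zeta(w)$ has its only pole, a simple one with residue $\tfrac12$, at $w=1$. Thus $L$ is meromorphic with simple poles exactly at the odd integers $w=1-2k$, $k\ge0$, and a direct computation yields $\res_{w=1-2k}L(w)=\tfrac1{\sqrt2}\binom{2k}{k}\bigl(\tfrac9{64}\bigr)^k$. Since $s+2j+1$ is odd, every pole of $L(s+2j+1)$ sits at an even value $s=-2(k+j)$; the would-be pole at $s=0$ is killed by the zero of $\binom{-s}{1}$, and the $j=l$ contribution at $s=-2l$ by the zero $\binom{2l}{2l+1}=0$. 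This establishes that $\tilde\eta(\cdot,a)$ is entire apart from simple poles at $s=-2l$, $l\ge1$.

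Collecting the surviving terms ($0\le j\le l-1$, $k=l-j$, and using that $\binom{-s}{2j+1}$ equals $\binom{2l}{2j+1}$ at $s=-2l$) I would obtain
\[
 \res_{s=-2l}\tilde\eta(s,a)
 =2\sum_{j=0}^{l-1}\binom{2l}{2j+1}a^{2j+1}\res_{w=1-2(l-j)}L(w),
\]
which equals the right-hand side of \eqref{E:tetaa.res}. For the value at $s=0$ I would note that $\binom{-s}{2j+1}$ vanishes at $s=0$ for every $j$, so only the $j=0$ term of $g$ contributes, paired against the simple pole of $L(s+1)$ at $s=0$; with $\res_{w=1}L=\tfrac1{\sqrt2}$ this term tends to $-\sqrt2\,a$, while $P(0)=2N$. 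Restoring the oddness in $a$ then gives \eqref{E:tetaa.zero}.

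Finally, the vanishing \eqref{E:tetaa.odd} is the main point. Taking $J=l$ and evaluating at $s=-2l-1$, the tail $G_l(-2l-1)$ vanishes identically because there the truncated expansion is exact for the polynomial $(\lambda_n+a)^{2l+1}-(\lambda_n-a)^{2l+1}$, and the finite correction recombines so as to cancel $P(-2l-1)$ term by term, leaving
\[
 \tilde\eta(-2l-1,a)=2\sum_{j=0}^{l}\binom{2l+1}{2j+1}a^{2j+1}\,L(2j-2l).
\]
It therefore suffices to show $L(-2m)=0$ for every $m\ge0$. Here $L(-2m)=2^{-m}\sum_{k=0}^{m}\binom{m}{k}\bigl(\tfrac98\bigr)^k\zeta_{\mathrm{odd}}(2k-2m)$, and each factor $\zeta_{\mathrm{odd}}(2k-2m)=(1-2^{2m-2k})\zeta(2k-2m)$ vanishes: for $k<m$ because $\zeta$ has a trivial zero at the negative even integer $2k-2m$, and for $k=m$ because $\zeta_{\mathrm{odd}}(0)=(1-2^0)\zeta(0)=0$. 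I expect the main obstacle to be the careful bookkeeping in this last step and in the value at $s=0$, namely tracking the finite part $P$ and the truncation remainder $G_J$ so that the cancellations are exact, whereas the conceptual crux, the vanishing of $\zeta_{\mathrm{odd}}$ at the non-positive even integers, is elementary.
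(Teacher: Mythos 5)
Your proposal follows essentially the same route as the paper's proof: isolate the finitely many indices with $\lambda_n<|a|$, expand the tail binomially in powers of $a/\lambda_n$ so that everything reduces to the Dirichlet series $L(w)=\sum_{n\ge0}\lambda_n^{-w}$ (the paper's $\zeta_0$), expand $L$ once more in powers of $\tfrac98(2n+1)^{-2}$ to reduce to the Riemann zeta function, and deduce \eqref{E:tetaa.odd} from the trivial zeros of $\zeta$ together with the vanishing of the factor $1-2^{-w}$ at $w=0$. Your residue bookkeeping (including the zeros of $\binom{-s}{2j+1}$ killing the would-be poles at $s=0$ and at $j=l$), the cancellation of $P(-2l-1)$ against the finite correction terms, and the final numerics all agree with the paper.

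One step, however, is false as written: the identity $L(w)=2^{w/2}\sum_{k\ge0}\binom{-w/2}{k}\bigl(\tfrac98\bigr)^k\zeta_{\mathrm{odd}}(w+2k)$ cannot hold as a convergent series. The binomial expansion of $\bigl(1+\tfrac98(2n+1)^{-2}\bigr)^{-w/2}$ diverges for the $n=0$ term, where $\tfrac98(2n+1)^{-2}=\tfrac98>1$; correspondingly, since $\zeta_{\mathrm{odd}}(w+2k)\to1$, the terms of the $k$-sum grow like $\bigl(\tfrac98\bigr)^k$ up to powers of $k$, so the right-hand side diverges for every $w$. The repair is exactly the device you already use for the first expansion (and which the paper uses, its remainder $h_L$): truncate at order $K$ and estimate the tail termwise, giving $L(w)=2^{w/2}\sum_{k=0}^{K}\binom{-w/2}{k}\bigl(\tfrac98\bigr)^k\zeta_{\mathrm{odd}}(w+2k)+H_K(w)$ with $H_K$ holomorphic for $\Re w>-2K-1$. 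Moreover $H_K(-2m)=0$ for $0\le m\le K$, because at $w=-2m$ the binomial series terminates, $(1+z)^m=\sum_{k=0}^{m}\binom{m}{k}z^k$ for \emph{all} $z$, including $z=\tfrac98$; the same termination argument gives the vanishing at $s=0$ and $s=-2l-1$ of your first-stage remainder $G_J$, which your evaluation of $\tilde\eta(0,a)$ tacitly uses. With these (purely technical) corrections, all your pole, residue, and special-value statements survive unchanged, and the argument coincides with the paper's.
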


	Every lattice $\Gamma$ in $G$ gives rise to a canonical short exact sequence of abelian groups \cite[\S2]{H23b}
	\begin{equation}\label{E:ses}
		0\to\frac{\Gamma\cap[G,G]}{[\Gamma,\Gamma]\cdot(\Gamma\cap C)}\to\frac\Gamma{[\Gamma,\Gamma]\cdot(\Gamma\cap C)}\to\frac\Gamma{\Gamma\cap[G,G]}\to0
	\end{equation}
	where $C$ denotes the center of $G$, $\frac\Gamma{\Gamma\cap[G,G]}\cong\mathbb Z^2$, and $\frac{\Gamma\cap[G,G]}{[\Gamma,\Gamma]\cdot(\Gamma\cap C)}\cong\mathbb Z/r\mathbb Z$ 
	is a finite cyclic group whose order shall be denoted by $r=r_\Gamma$.
	There are no restrictions on this order; for each $r\in\mathbb N$ there exists a lattice giving rise to a group of order $r$, see \cite[Lemma~1]{H23b}.
	As the sequence in \eqref{E:ses} splits we obtain the following description of unitary characters $\chi\colon\Gamma\to U(1)$ that vanish on $\Gamma\cap C$:
	\[
		\hom\bigl(\tfrac\Gamma{\Gamma\cap C},U(1)\bigr)
		\cong U(1)\times U(1)\times\hom\bigl(\mathbb Z/r\mathbb Z,U(1)\bigr).
	\]
	In particular, every unitary character of $\frac{\Gamma\cap[G,G]}{[\Gamma,\Gamma]\cdot(\Gamma\cap C)}\cong\mathbb Z/r\mathbb Z$ 
	can be extended to a unitary character of $\Gamma$ that vanishes on $\Gamma\cap C$.

	\begin{theorem}\label{T:eta.nilmf.values}
		In the situation of Theorem~\ref{T:eta.nilmf.entire} assume, moreover, 
		that $F=F_\chi$ is the unitary flat line bundle over the nilmanifold $M=\Gamma{\setminus}G$ associated with a unitary character $\chi\colon\Gamma\to U(1)$.
		Furthermore, let $g_\Gamma$ denote the fiberwise graded Euclidean inner product on $\mathfrak t(\Gamma{\setminus}G)$ 
		associated with a graded Euclidean inner product $g$ on $\goe$ for which $b_g$ is proportional to $g|_{\goe_{-1}}$, cf.~\eqref{E:bg}.
		Then the eta function of the associated operator $S=S^{\Gamma{\setminus}G,F}_{\mathcal D_\Gamma,g_\Gamma,\mathfrak o_\Gamma}$ in~\eqref{E:S.def.intro} has the following properties:

		(a) If $\chi|_{\Gamma\cap C}$ is nontrivial, then $\eta_S(s)=0$ for all $s$.

		(b) If $\chi|_{\Gamma\cap[G,G]}$ is trivial, then $\eta_S(s)=0$ for all $s$.

		(c) If $\chi|_{\Gamma\cap C}$ is trivial and $\chi|_{\Gamma\cap[G,G]}$ is nontrivial, then
		\begin{equation}\label{E:eta.S.formula.intro}
			\eta_S(s)
			=r\cdot\left(\frac{2\pi}{\sqrt{g(\gamma,\gamma)}}\right)^{-s}
			\cdot\eta_\Hurw\bigl(s-1,\tfrac cr\bigr)
			\cdot\tilde\eta\bigl(s,\tfrac54\bigr)
		\end{equation}
		where $r=r_\Gamma$ denotes the order of $\frac{\Gamma\cap[G,G]}{[\Gamma,\Gamma]\cdot(\Gamma\cap C)}\cong\mathbb Z/r\mathbb Z$, 
		$\gamma=\gamma_{\Gamma,\mathfrak o}\in\goe_{-2}=\frac{[G,G]}C\cong\mathbb R$ is the positive (with respect to the orientation of $\goe_{-2}$ induced by $\mathfrak o$) 
		generator of $\frac{\Gamma\cap[G,G]}{\Gamma\cap C}\cong\mathbb Z$, 
		and $c=c_{\Gamma,\mathfrak o,\chi}$ denotes an integer (unique and non-zero mod $r$) such that $e^{2\pi\mathbf ic/r}=\chi(\gamma)$.
		Moreover, $\eta_S(0)=0$ and for $l=1,2,3,\dotsc$ we have
		\begin{multline}\label{E:eta.S.2l}
			\eta_S(-2l)
			=\frac{(-1)^l\sqrt2\cdot r}{2\pi\cdot g(\gamma,\gamma)^l}
			\cdot(2l+1)!\cdot\Im\Li_{2l+2}\bigl(e^{2\pi\mathbf ic/r}\bigr)
			\\
			\cdot\sum_{j=0}^{l-1}\binom{2l}{2j+1}\binom{2l-2j}{l-j}\left(\frac9{8\cdot8}\right)^{l-j}\left(\frac54\right)^{2j+1}
		\end{multline}
		where $\Li_s(z)=\sum_{n=1}^\infty\frac{z^n}{n^s}$ denotes the polylogarithm \cite[\href{https://dlmf.nist.gov/25.12.ii}{\S25.12(ii)}]{NIST:DLMF}. 
	\end{theorem}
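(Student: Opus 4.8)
The plan is to compute $\eta_S(s)$ by decomposing the twisted $L^2$-space over the nilmanifold into irreducible unitary representations of $G$ and diagonalizing $S$ representation by representation. Since $\mathcal D_\Gamma$, $g_\Gamma$, $\mathfrak o_\Gamma$ and the flat connection on $F_\chi$ all descend from left invariant data on $G$, the operator $S$ commutes with the right regular representation and hence preserves each isotypic component of $L^2(\Gamma{\setminus}G,F_\chi)$, restricting on each irreducible $\pi$ to a self-adjoint operator $S_\pi$ built from the infinitesimal action of $\goe$. By the Kirillov orbit method the coadjoint orbits fall into three types: generic orbits, on which the central functional on $\goe_{-3}=\langle X_4,X_5\rangle$ is non-zero; orbits with vanishing $\goe_{-3}$-component but non-zero $\goe_{-2}=\langle X_3\rangle$-component, which factor through the Heisenberg quotient $G/C$; and flat orbits carrying the one-dimensional representations. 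Which type occurs in $L^2(\Gamma{\setminus}G,F_\chi)$ is governed exactly by the restrictions $\chi|_{\Gamma\cap C}$ and $\chi|_{\Gamma\cap[G,G]}$, so the three types match cases (a), (b) and (c).

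In cases (a) and (b) only generic, respectively one-dimensional, representations occur, and I would show that in each the non-zero spectrum of $S_\pi$ is symmetric about the origin, so that $\eta_{S_\pi}(s)$ vanishes term by term and $\eta_S\equiv0$. This refines the vanishing of Theorem~\ref{T:eta.nilmf.entire} to the whole eta function and rests on a symmetry of the relevant Schr\"odinger models under which $S_\pi$ is conjugate to $-S_\pi$ (for instance the orientation-reversing involution induced by the graded automorphism $X_1\leftrightarrow X_2$ of $\goe$, which sends $X_3\mapsto-X_3$ and reverses the sign of the Hodge star). Summing the vanishing contributions over the discrete family of occurring representations yields the identically zero eta function asserted in (a) and (b).

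Case (c) is the substantial one. Here $\chi|_{\Gamma\cap C}$ is trivial but $\chi|_{\Gamma\cap[G,G]}$ is not, so only Heisenberg-type representations occur, and I would invoke the explicit (twisted) decomposition of $L^2$ over the Heisenberg nilmanifold $\tfrac\Gamma{\Gamma\cap C}{\setminus}(G/C)$: the Schr\"odinger representation with central parameter $h_m=\tfrac{2\pi}{\sqrt{g(\gamma,\gamma)}}\bigl(m+\tfrac cr\bigr)$, $m\in\mathbb Z$, occurs with multiplicity $r\,\bigl|m+\tfrac cr\bigr|$, the shift $c/r$ being forced by $\chi(\gamma)=e^{2\pi\mathbf ic/r}$ and the factor $r$ by the order of the cyclic group in \eqref{E:ses}. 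On each such representation the hypothesis $b_g\propto g|_{\goe_{-1}}$ makes the Heisenberg symbol of $S$ diagonalize into harmonic-oscillator blocks, and the spectrum of $S_\pi$ computed in \eqref{E:spec.S.h} consists of the eigenvalues $h_m\bigl(\tfrac54\pm\lambda_n\bigr)$, $n\ge0$, with the $\lambda_n$ of \eqref{E:lambdan} and all remaining metric data collapsing into the single scale $h_m$. Writing $\eta_S(s)=\sum_m r\bigl|m+\tfrac cr\bigr|\,\eta_{S_{\pi_m}}(s)$ (valid by the absolute convergence of Theorem~\ref{T:eta.S.hinv}) and pulling the scalar $\sign(h_m)|h_m|^{-s}$ out of each $\eta_{S_{\pi_m}}(s)$, the inner sum over $n$ reproduces precisely $\tilde\eta(s,\tfrac54)$ of \eqref{E:tetaa.def}, while the outer sum over $m$, whose surviving weight is $\sign\bigl(m+\tfrac cr\bigr)\bigl|m+\tfrac cr\bigr|^{1-s}$, reproduces $r\bigl(\tfrac{2\pi}{\sqrt{g(\gamma,\gamma)}}\bigr)^{-s}\eta_\Hurw(s-1,\tfrac cr)$ of \eqref{E:eta.Hurw}, giving \eqref{E:eta.S.formula.intro}. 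As a check, replacing $\chi$ by $\bar\chi$ sends $c\mapsto-c$ and, $\eta_\Hurw$ being odd in its second argument, flips the sign of $\eta_S$, matching the orientation-reversing symmetry above; this asymmetry persists precisely because $c\not\equiv0\pmod r$, the boundary case $c\equiv0$ being exactly case (b).

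Finally I would read off the special values from the product structure of \eqref{E:eta.S.formula.intro}. Since $\eta_\Hurw(\,\cdot\,,a)$ vanishes at negative odd integers (as noted after \eqref{E:Hurw}), the factor $\eta_\Hurw(s-1,\tfrac cr)$ has a simple zero at $s=0$ and at each $s=-2l$, whereas by Lemma~\ref{L:tetaa} the factor $\tilde\eta(s,\tfrac54)$ is regular at $s=0$ and has a simple pole at each $s=-2l$; their product is therefore regular everywhere, gives $\eta_S(0)=0$, and at $s=-2l$ equals $\tfrac{d}{ds}\eta_\Hurw(s-1,\tfrac cr)\big|_{s=-2l}\cdot\res_{s=-2l}\tilde\eta(s,\tfrac54)$. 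For the derivative I would use the Hurwitz functional equation in the form
\[
\eta_\Hurw(s,a)=4\,\Gamma(1-s)\,(2\pi)^{s-1}\cos\bigl(\tfrac{\pi s}2\bigr)\,\Im\Li_{1-s}\bigl(e^{2\pi\mathbf ia}\bigr),
\]
whose simple zero comes from the cosine: differentiating it at $s=-2l-1$ yields $(-1)^l(2l+1)!\,(2\pi)^{-2l-1}\,\Im\Li_{2l+2}(e^{2\pi\mathbf ic/r})$, which combined with the residue \eqref{E:tetaa.res} at $a=\tfrac54$ and the scale factor $(2\pi)^{2l}/g(\gamma,\gamma)^l$ produces exactly \eqref{E:eta.S.2l}. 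I expect the principal obstacle to be the spectral input \eqref{E:spec.S.h}: identifying the universal constant $\tfrac54$ and the family $\lambda_n$ by diagonalizing the matrix of oscillator operators representing $S$, checking that under $b_g\propto g|_{\goe_{-1}}$ all metric dependence beyond the overall scale $h_m$ drops out, and pinning down the multiplicity $r\bigl|m+\tfrac cr\bigr|$ in the twisted nilmanifold decomposition. Once these are in hand, the assembly of the product formula and the residue extraction of the special values are routine.
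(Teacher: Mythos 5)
Your proposal takes essentially the same route as the paper: decompose $\Gamma(F_\chi)$ into irreducibles via Howe--Richardson, argue that only Schr\"odinger representations contribute, insert the spectrum \eqref{E:spec.S.h} and the multiplicities $r\bigl|m+\tfrac cr\bigr|$, and resum into \eqref{E:eta.S.formula.intro}; your extraction of the special values --- the zero of $\eta_\Hurw(s-1,\tfrac cr)$ against the simple pole of $\tilde\eta(s,\tfrac 54)$, and the derivative $\eta_\Hurw'(-2l-1,a)=(-1)^l(2\pi)^{-2l-1}(2l+1)!\,\Im\Li_{2l+2}(e^{2\pi\mathbf ia})$ obtained from the functional equation (your form of it is equivalent to \eqref{E:eta.HUrw.Lis}) --- is precisely the paper's computation \eqref{E:eta.S.2l.pre}--\eqref{E:eta.dHurw.odd}. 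Case (c), the substantial part, is in order.

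Two steps, however, are wrong as stated. First, your accounting of which representations occur: in case (b) it is false that only one-dimensional representations appear --- your own multiplicity formula at $c=0$ shows that the Schr\"odinger representations with parameter proportional to $n\in\mathbb Z\setminus\{0\}$ occur with multiplicity $r|n|>0$, and generic representations occur as well (a representation occurring need only agree with $\chi$ on the lattice points $\Gamma\cap C$, not be trivial on all of $C$). The vanishing in case (b) is therefore not term-by-term but comes from cancellation in pairs, $m(\rho_\hbar)=m(\rho_{-\hbar})$ together with $\eta_{\rho_{-\hbar}(S)}(s)=-\eta_{\rho_\hbar(S)}(s)$, which is exactly what your case (c) sum gives when $c=0$; similarly, in case (c) generic representations do occur, so \eqref{E:eta.S.formula.intro} requires knowing $\eta_{\rho(S)}\equiv0$ for every generic $\rho$, not their absence. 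Second, and this is the genuine gap, the symmetry mechanism you propose for generic representations fails: the graded automorphism swapping $X_1\leftrightarrow X_2$ sends $X_3\mapsto-X_3$, $X_4\mapsto-X_5$, $X_5\mapsto-X_4$, hence carries $\rho_{\lambda,\mu,\nu}$ to a representation with central parameters $(-\mu,-\lambda)$, which by Dixmier's classification is not equivalent to $\rho_{\lambda,\mu,\nu}$ unless $\mu=-\lambda$. What is needed is, for each $(\lambda,\mu)$, an orientation-reversing $g$-orthogonal graded automorphism whose induced action on $\goe_{-3}^*$ fixes $(\lambda,\mu)$; such a reflection exists for every $(\lambda,\mu)$ precisely because $b_g$ is proportional to $g|_{\goe_{-1}}$, so that all of $\OO_2(\mathbb R)$ acts by isometric graded automorphisms, cf.~\eqref{E:Aut.gr.g} and Lemma~\ref{L:eta.generic}(b). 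This symmetry is needed in all three cases, including (a), where your argument otherwise rests entirely on it. With these two repairs your proof coincides with the paper's.
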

	
	Theorem~\ref{T:eta.nilmf.values} clearly shows that on a (2,3,5) nilmanifold the eta function of $S$ is nontrivial in general and, hence, 
	the spectrum of $S$ need not be symmetrical about zero.
	More specifically, we have:

	\begin{corollary}
		In the situation of Theorem~\ref{T:eta.nilmf.values}(c) and for $l=1,2,\dotsc$ we have 
		\begin{align*}
			(-1)^l\eta_S(-2l)>0&\qquad\text{if $\tfrac cr\in(0,\tfrac12)+\mathbb Z$ and}\\
			(-1)^l\eta_S(-2l)<0&\qquad\text{if $\tfrac cr\in(\tfrac12,1)+\mathbb Z$.}
		\end{align*}
		If $\frac cr\in\frac12+\mathbb Z$, then $\eta_S(s)$ vanishes identically.
	\end{corollary}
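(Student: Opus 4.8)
The plan is to extract the sign of $(-1)^l\eta_S(-2l)$ from the closed formula~\eqref{E:eta.S.2l} by isolating the one factor whose sign is not already constant in $l$, namely the imaginary part of the polylogarithm. First I would note that, for $l\geq1$, multiplying~\eqref{E:eta.S.2l} by $(-1)^l$ turns every factor except $\Im\Li_{2l+2}(e^{2\pi\mathbf ic/r})$ into a manifestly positive quantity: the prefactor $\tfrac{(-1)^l\sqrt2\,r}{2\pi\,g(\gamma,\gamma)^l}$ contributes exactly the sign $(-1)^l$ since $r\geq1$ and $g(\gamma,\gamma)>0$; the factorial $(2l+1)!$ is positive; and the sum over $j$ in~\eqref{E:eta.S.2l} is, as $l\geq1$, a nonempty sum of strictly positive terms. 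Hence $(-1)^l\eta_S(-2l)$ has the same sign as $\Im\Li_{2l+2}(e^{2\pi\mathbf ic/r})$, and the whole corollary reduces to determining the sign of this number as $\tfrac cr$ varies.

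The heart of the matter is the following positivity statement for
\[
	f_s(\theta):=\Im\Li_s\bigl(e^{2\pi\mathbf i\theta}\bigr)=\sum_{n=1}^\infty\frac{\sin(2\pi n\theta)}{n^s},
\]
valid for every even integer $s\geq2$: one has $f_s(\theta)>0$ for $0<\theta<\tfrac12$, $f_s(\theta)<0$ for $\tfrac12<\theta<1$, and $f_s(\tfrac12)=0$. I would prove the assertion on $(0,\tfrac12)$ by induction over the even integers $s$. For every such $s$ the boundary values $f_s(0)=f_s(\tfrac12)=0$ are immediate, since $\sin(2\pi n\theta)$ vanishes for all $n$ at $\theta\in\{0,\tfrac12\}$. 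In the base case $s=2$ I would differentiate termwise (justified on compact subsets of $(0,1)$ by the Dirichlet test) to get $f_2'(\theta)=2\pi\sum_{n\geq1}\tfrac{\cos(2\pi n\theta)}{n}=-2\pi\ln\bigl(2\sin(\pi\theta)\bigr)$, which is positive on $(0,\tfrac16)$ and negative on $(\tfrac16,\tfrac12)$; since $f_2$ vanishes at both endpoints this forces $f_2>0$ on $(0,\tfrac12)$. For the inductive step with $s\geq4$, termwise double differentiation is legitimate because the resulting series $\sum_{n\geq1}\tfrac{\sin(2\pi n\theta)}{n^{s-2}}$ converges uniformly ($s-2\geq2$), giving $f_s''=-(2\pi)^2f_{s-2}$. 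The hypothesis $f_{s-2}>0$ on $(0,\tfrac12)$ then yields $f_s''<0$ there, so $f_s$ is strictly concave on $(0,\tfrac12)$ and vanishes at both endpoints, whence $f_s>0$ on the open interval. The sign on $(\tfrac12,1)$ and the value at $\tfrac12$ follow from $f_s(1-\theta)=-f_s(\theta)$, which is the oddness and $1$-periodicity of $f_s$ in $\theta$.

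With this claim I would conclude as follows. In the situation of Theorem~\ref{T:eta.nilmf.values}(c) the integer $c$ is nonzero modulo $r$, so $\tfrac cr\notin\mathbb Z$ and its fractional part lies in $(0,\tfrac12)$, equals $\tfrac12$, or lies in $(\tfrac12,1)$. Since $f_{2l+2}$ is $1$-periodic, applying the claim with $s=2l+2$ and $\theta=\tfrac cr$ gives $\Im\Li_{2l+2}(e^{2\pi\mathbf ic/r})>0$ when $\tfrac cr\in(0,\tfrac12)+\mathbb Z$ and $<0$ when $\tfrac cr\in(\tfrac12,1)+\mathbb Z$; combined with the first paragraph this yields the two displayed inequalities for $(-1)^l\eta_S(-2l)$. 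For $\tfrac cr\in\tfrac12+\mathbb Z$ I would appeal to the stronger identity~\eqref{E:eta.S.formula.intro} rather than~\eqref{E:eta.S.2l}: by the oddness and $1$-periodicity of $\eta_\Hurw$ in its second argument one has $\eta_\Hurw(s-1,\tfrac12)=\eta_\Hurw(s-1,-\tfrac12)=-\eta_\Hurw(s-1,\tfrac12)$, hence $\eta_\Hurw(s-1,\tfrac cr)=0$ for all $s$, and therefore $\eta_S(s)$ vanishes identically.

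The step I expect to be the main obstacle is the positivity of $f_s$, since it asks for sign control of an oscillatory sine series whose summands change sign with $n$. The concavity induction circumvents a direct estimate, but it rests on two points I would verify with care: the correct location of the single sign change of $f_2'$ at $\theta=\tfrac16$ (equivalently, $2\sin(\pi\theta)=1$), which pins down the unimodality giving the base case; and the legitimacy of termwise differentiation, which is why I split off $s=2$ (handled via the conditionally convergent cosine series) from the steps $s\geq4$ (handled via the absolutely and uniformly convergent twice-differentiated series).
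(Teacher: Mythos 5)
Your proposal is correct, and its reduction steps coincide with the paper's: the same positivity bookkeeping in \eqref{E:eta.S.2l} showing that $(-1)^l\eta_S(-2l)$ has the sign of $\Im\Li_{2l+2}(e^{2\pi\mathbf ic/r})$, and the same treatment of the case $\tfrac cr\in\tfrac12+\mathbb Z$ via oddness and periodicity of $\eta_\Hurw$ in \eqref{E:eta.S.formula.intro}. Where you genuinely diverge is the key sign determination. The paper quotes the integral representation $\Li_s(z)=\frac z{\Gamma(s)}\int_0^\infty\frac{x^{s-1}dx}{e^x-z}$ and reads off in one line that $(2l+1)!\cdot\Im\Li_{2l+2}(e^{2\pi\mathbf ia})=\sin(2\pi a)\int_0^\infty\frac{x^{2l+1}e^x\,dx}{|e^x-e^{2\pi\mathbf ia}|^2}$, so the sign is that of $\sin(2\pi a)$ for every $a\notin\frac12\mathbb Z$ and every $l\geq0$ simultaneously. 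You instead work with the Fourier series $\Im\Li_s(e^{2\pi\mathbf i\theta})=\sum_{n\geq1}\sin(2\pi n\theta)/n^s$ and prove positivity on $(0,\tfrac12)$ by induction on even $s$, using $f_s''=-(2\pi)^2f_{s-2}$ (strict concavity plus vanishing boundary values) with the base case $s=2$ settled by the classical log-sine series $f_2'(\theta)=-2\pi\ln(2\sin(\pi\theta))$ and a unimodality argument; all analytic justifications you give (Dirichlet test for $s=2$, absolute uniform convergence for $s\geq4$, oddness $f_s(1-\theta)=-f_s(\theta)$) are sound. Your route is more elementary and self-contained, avoiding the DLMF integral formula at the cost of an induction and of being restricted to even integer orders, which is all the corollary needs; the paper's route is shorter and yields the sign statement uniformly in $s$.
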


	\begin{proof}
		If $\frac cr\in\frac12+\mathbb Z$, then $\eta_\Hurw(s,\frac cr)=0$ in view of $\eta_\Hurw(s,a+1)=\eta_\Hurw(s,a)$ and $\eta_\Hurw(s,-a)=-\eta_\Hurw(s,a)$.
		Hence, in this case $\eta_S(s)$ vanishes identically by \eqref{E:eta.S.formula.intro}.
		The integral formula for the polylogarithm in \cite[\href{https://dlmf.nist.gov/25.12.E11}{Eq.~(25.12.11)}]{NIST:DLMF},
		\[
			\Li_s(z)=\frac z{\Gamma(s)}\int_0^\infty\frac{x^{s-1}dx}{e^x-z},\qquad\Re s>0,\, z\in\mathbb C\setminus[1,\infty)
		\]
		yields
		\begin{equation}\label{E:Im.Li.even}
			(2l+1)!\cdot\Im\Li_{2l+2}(e^{2\pi\mathbf ia})
			=\sin(2\pi a)\int_0^\infty\frac{x^{2l+1}e^xdx}{|e^x-e^{2\pi\mathbf i a}|^2}
		\end{equation}
		for $a\in\mathbb R\setminus\mathbb Z$ and $l=0,1,2,\dotsc$
		Hence, $(2l+1)!\cdot\Im\Li_{2l+2}(e^{2\pi\mathbf ia})$ has the same sign as $\sin(2\pi a)$ for $a\notin\frac12\mathbb Z$.
		The remaining assertions thus follow from \eqref{E:eta.S.2l}.
	\end{proof}

	Let us close this introduction with a few preliminary remarks concerning the proofs of the aforementioned theorems.

	The operator $S$ in \eqref{E:S.def.intro} is closely related to the Rockland operator 
	\[
		\Delta=\Delta^{M,F}_{\mathcal D,g}=\bigl((D_1D_1^*)^2+(D_2^*D_2)^3\bigr)^F.
	\]
	The short time asymptotic expansion of $\tr(Se^{-t\Delta})$ provides the analytic continuation of $\eta_S(s)$ via Mellin transform,
	\begin{equation}\label{E:eta.cont.intro}
		\eta_S(s)
		=\tr\left(S\Delta^{-(s+1)/6}\right)
		=\frac1{\Gamma\left(\frac{s+1}6\right)}\int_0^\infty t^{(s+1)/6}\tr\bigl(Se^{-t\Delta}\bigr)\frac{dt}t,\quad\Re s>5.
	\end{equation}
	The coefficients in this expansion give all residues of $\eta_S(s)$ and the values at $s=-1,-7,-13,\dotsc$
	For the values at the remaining negative odd integers we replace $S$ with odd powers of $S$, cf.~Proposition~\ref{P:etaSs}.
	As the coefficients in the asymptotic expansion of $\tr(S^{1+2r}e^{-t\Delta})$ are locally computable, 
	they coincide with the coefficients associated with the trivial flat vector bundle, $F_\triv=M\times\mathbb C^{\rk F}$.
	However, the eta function associated with $F_\triv$ vanishes identically since the spectrum of $S$ is symmetrical about zero in this case, cf.~\cite[Remark~3a, p~61]{APS75}.
	Hence, the coefficients in the aforementioned expansion vanish, for all unitary flat bundles $F$, see Proposition~\ref{P:loc.quant} below.
	This yields everything but the last part of Theorem~\ref{T:eta.S.hinv}.
	For the remaining statement we show that the variation of the eta invariant with respect to $g$ and $\mathcal D$ is locally computable
	and argue as before to see that this variation must vanish too.
	
	The proof of Theorem~\ref{T:eta.nilmf.values} is based on the decomposition 
	\begin{equation}\label{E:S.deco}
		S=\bigoplus_\rho m(\rho)\cdot\rho\bigl(S_G\bigr),
	\end{equation}
	where $\rho$ runs through the irreducible unitary representations of $G$, and 
	\[
		S_G:=S^{G.\mathbb C}_{\mathcal D_G,g_G,\mathfrak o_G}=\mathbf i\star_3D_2\in\mathcal U(\goe)\otimes\eend\bigl(H^2(\goe)\bigr)
	\]
	denotes the left invariant differential operator on $G$ associated with the left invariant (2,3,5) distribution $\mathcal D_G$.
	The multiplicities $m(\rho)$ are known explicitly through a formula of Howe \cite{H71} and Richardson \cite{R71}.
	The eta function decomposes accordingly,
	\begin{equation}\label{E:S.eta.deco}
		\eta_S(s)=\sum_\rho m(\rho)\cdot\eta_{\rho(S_G)}(s).
	\end{equation}
	All irreducible unitary representations of $G$ have been described explicitly by Dixmier \cite[Prop~8]{D58}, see also \cite{K62,K04}.
	There are three types of these representations: scalar representations, Schr\"odinger representations, and generic representations.
	For scalar representations $\rho$, the spectrum of $\rho(S_G)$ is symmetrical about zero, hence $\eta_{\rho(S_G)}(s)=0$, see Lemma~\ref{L:eta.scalar} below.
	The same is true for generic representations $\rho$, see Lemma~\ref{L:eta.generic}(b) below.
	For Schr\"odinger representations $\rho$, the spectrum of $\rho(S_G)$ has been determined in \cite[Lemma~3]{H23a} and this permits to compute
	residues and special values of $\eta_{\rho(S_G)}$, see Lemma~\ref{L:eta.Schroedinger}(b) below.
	Combining this with \eqref{E:S.eta.deco} and the known multiplicities $m(\rho)$ we obtain the explicit formula \eqref{E:eta.S.formula.intro} stated in Theorem~\ref{T:eta.nilmf.values}(c).
	In case (a) of Theorem~\ref{T:eta.nilmf.values} no Schr\"odinger representations appear in the decomposition \eqref{E:S.deco} and $\eta_S$ vanishes identically.
	In case (b) the contributions from the Schr\"odinger representations cancel in pairs.

	Theorem~\ref{T:eta.nilmf.entire} can be derived from Theorems~\ref{T:eta.S.hinv} and \ref{T:eta.nilmf.values} using the fact that 
	every irreducible unitary representation $\rho\colon\Gamma\to U(N)$ is induced from a unitary character on a sublattice \cite{B73}.

	There are well-understood topological obstructions to the existence of (2,3,5) distributions with prescribed Euler class, cf.~\cite[Thm~1]{DH19}.
	One motivation for our study of the Rumin complex is the natural question: Are there further, geometrical obstructions to the existence of (2,3,5) distributions on closed 5-manifolds?
	More specifically: To what extent do (2,3,5) distributions abide by an h-principle \cite{EM02,G86}?
	Similar questions have led to interesting discoveries in contact and Engel topology \cite{BEM15,CPdPP17,CdPP20,E89,G91,dPV20,V09}.
	The analytic torsion of the Rumin complex associated with a (2,3,5) distribution has been discussed in \cite{H22}.
	For nilmanifolds this analytic torsion coincides with the Ray--Singer torsion, see \cite{H23b} and \cite[Thm~1.3]{H22}.

	The remaining part of this paper is organized as follows.
	In Section~\ref{S:eta} we discuss basic properties of the eta function of self-adjoint Rockland differential operators on closed filtered manifolds.
	In Section~\ref{S:eta235} we specialize to the operator $S$ associated with a (2,3,5) distribution and give a proof of Theorem~\ref{T:eta.S.hinv}.
	In Section~\ref{S:reps} we discuss the eta function of $S_G$ in irreducible unitary representations of $G$.
	In Section~\ref{S:nilmf} we combine these results to prove Theorems~\ref{T:eta.nilmf.entire} and \ref{T:eta.nilmf.values}.

\section{The eta invariant on filtered manifolds}\label{S:eta}

	The analysis available in the literature permits to give a rigorous definition of the eta invariant \cite{APS73,APS75} 
	of (formally) self-adjoint Rockland differential operators on general closed filtered manifolds
	by generalizing the approach used in \cite[\S9]{BHR07} and \cite[Chapter~1.10]{G84}.
	In this section we will recall these arguments and point to the required subelliptic analysis \cite{M82,EY19,DH20,DH22}.

	To this end let $A=A^*$ be a formally self-adjoint differential operator of Heisenberg order $\kappa>0$ acting on 
	sections of a complex vector bundle $E$ over a closed filtered manifold $M$ of homogeneous dimension $n$.
	Here the adjoint is with respect to a standard $L^2$ inner product on sections of $E$ constructed using a fiberwise Hermitian inner product on $E$ and a volume density on $M$.
	Assume that $A$ satisfies the (pointwise) Rockland condition \cite[Def~2.5]{DH22}.
	Then $A$ admits a parametrix in the Heisenberg calculus \cite[Thm~A]{DH22}.
	Consequently, $A$ is essentially self-adjoint with compact resolvent \cite[Lemma~1]{DH20} and, hence, has a spectrum that consists of isolated eigenvalues with finite multiplicities.

	Clearly, $A^2=A^*A=|A|^2$ is a Rockland differential operator of Heisenberg order $2\kappa>0$.
	Moreover, $A|A|^{-s-1}=A(A^2)^{-(s+1)/2}$ is a pseudodifferential operator of Heisenberg order $-\kappa s$ in the Heisenberg calculus \cite[Thm~2]{DH20}, 
	whence trace class for $\Re s>n/\kappa$, see \cite[Prop~3.7(d)]{DH22}.
	Throughout this paper the complex powers of an operator are understood to vanish on its kernel.
	Put
	\begin{equation}\label{E:eta.def}
		\eta_A(s):=\tr\left(A|A|^{-s-1}\right)=\sum_{\lambda\in\spec_*(A)}\sign(\lambda)|\lambda|^{-s},\qquad\Re s>n/\kappa
	\end{equation}
	where $\spec_*(A)$ denotes the nonzero spectrum of $A$, i.e., the sum is over all nonzero eigenvalues $\lambda$ of $A$, repeated according to their multiplicity.

	The Schwartz kernel of $Ae^{-tA^2}$ admits an asymptotic expansion along the diagonal of the form
	\[
		\bigl(Ae^{-tA^2}\bigr)(x,x)
		\sim\sum_{j=0}^\infty t^{(j-\kappa-n)/2\kappa}\mathbf a_j(A,A^2)(x),\qquad\text{as $t\to0$,}
	\]
	where $\mathbf a_j(A,A^2)\in\Gamma^\infty\bigl(\eend(E)\otimes|\Lambda|_M\bigr)$ is locally computable, and $\mathbf a_j(A,A^2)=0$ if $j-\kappa$ is odd, cf.~\cite[Lemma~2.18]{H22}.
	Here $|\Lambda|_M$ denotes the bundle of densities on $M$.
	Taking the pointwise trace and integrating over $M$ we obtain the expansion 
	\begin{equation}\label{E:trAe-tA2}
		\tr\bigl(Ae^{-tA^2}\bigr)
		\sim\sum_{j=0}^\infty t^{(j-\kappa-n)/2\kappa}a_j(A,A^2),\qquad\text{as $t\to0$,}
	\end{equation}
	with coefficients $a_j(A,A^2)=\int_M\tr_E(\mathbf a_j(A,A^2))$ and $a_j(A,A^2)=0$ if $j-\kappa$ is odd.
	
	\begin{proposition}\label{P:eta.cont}
		The function $\eta_A(s)$ extends meromorphically to the entire complex plane.
		Its poles are all simple and they can only be located at $s=(n-j)/\kappa$ with $j\in\mathbb N_0$ such that $j-\kappa$ is even.
		The residues are
		\begin{equation}\label{E:etaA.res}
			\res_{s=\frac{n-j}\kappa}\eta_A(s)=2\cdot\frac{a_j(A,A^2)}{\Gamma\left(\frac{n+\kappa-j}{2\kappa}\right)},
		\end{equation}
		where $a_j(A,A^2)$ are the coefficients in the expansion \eqref{E:trAe-tA2}.
		In particular, 
		\begin{equation}\label{E:eta.A.res0}
			\res_{s=0}\eta_A(s)=2\cdot\frac{a_n(A,A^2)}{\sqrt\pi}
		\end{equation}
		and the eta function is regular at $s=0$ if $n-\kappa$ is odd.
		If $l\in\mathbb N_0$, then the eta function is regular at $s=-2l-1$,
		\begin{equation}\label{E:etaA.value}
			\eta_A(-2l-1)=(-1)^ll!\cdot a_{n+(2l+1)\kappa}(A,A^2),
		\end{equation}
		and this vanishes for odd $n$.
	\end{proposition}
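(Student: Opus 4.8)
The plan is to derive every assertion from a single Mellin-transform identity relating $\eta_A(s)$ to the heat-type trace $\tr\bigl(Ae^{-tA^2}\bigr)$. First I would record the elementary integral $\int_0^\infty t^{(s-1)/2}e^{-t\lambda^2}\,dt=\Gamma\bigl(\tfrac{s+1}2\bigr)|\lambda|^{-s-1}$, valid for $\Re s>-1$ and $\lambda\neq0$, multiply by $\lambda$, and sum over the spectrum. Since both the complex powers and $Ae^{-tA^2}$ vanish on $\ker A=\ker A^2$ by the standing convention, the sum runs over $\spec_*(A)$ only, and the trace-class statement preceding the proposition gives absolute convergence for $\Re s>n/\kappa$, justifying the interchange of sum and integral. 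Using $\lambda|\lambda|^{-s-1}=\sign(\lambda)|\lambda|^{-s}$ this yields
\[
	\Gamma\bigl(\tfrac{s+1}2\bigr)\,\eta_A(s)=\int_0^\infty t^{(s-1)/2}\tr\bigl(Ae^{-tA^2}\bigr)\,dt,\qquad\Re s>n/\kappa.
\]

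Next I would split the integral at $t=1$. The tail $\int_1^\infty$ converges for every $s$ and defines an entire function, since the nonzero eigenvalues are bounded away from $0$, forcing exponential decay of the integrand. For $\int_0^1$ I insert the asymptotic expansion \eqref{E:trAe-tA2}: subtracting the first $N$ terms leaves a remainder integrable against $t^{(s-1)/2}$ and holomorphic for $\Re s>(n-N-1)/\kappa$, while each subtracted monomial $t^{(j-\kappa-n)/2\kappa}$ contributes $\int_0^1 t^{(s-1)/2+(j-\kappa-n)/2\kappa}\,dt$, a function with a single simple pole. A direct computation places this pole at $s=(n-j)/\kappa$ with residue $2a_j(A,A^2)$ (the slope $\tfrac12$ in $s$ of the denominator producing the factor $2$). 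Letting $N\to\infty$ gives the meromorphic continuation of the right-hand side, with simple poles exactly at $s=(n-j)/\kappa$ for $a_j\neq0$, i.e.\ $j-\kappa$ even. Dividing by the entire factor $1/\Gamma\bigl(\tfrac{s+1}2\bigr)$ transfers these to $\eta_A$, and evaluating $\Gamma\bigl(\tfrac{(n-j)/\kappa+1}2\bigr)=\Gamma\bigl(\tfrac{n-j+\kappa}{2\kappa}\bigr)$ gives \eqref{E:etaA.res}. Taking $j=n$ and $\Gamma(\tfrac12)=\sqrt\pi$ yields \eqref{E:eta.A.res0}, and regularity at $s=0$ when $n-\kappa$ is odd follows because then $a_n=0$.

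The values at $s=-2l-1$ are where the matching of zeros against poles must be tracked carefully, and I expect this to be the main point of the argument. The reciprocal Gamma factor $1/\Gamma\bigl(\tfrac{s+1}2\bigr)$ has a simple zero at $s=-2l-1$, coming from the pole of $\Gamma$ at $w=\tfrac{s+1}2=-l$; from $\Gamma(w)\sim\frac{(-1)^l/l!}{w+l}$ I get $\tfrac{d}{dw}\tfrac1{\Gamma(w)}\big|_{w=-l}=(-1)^l l!$, hence $\tfrac{d}{ds}\tfrac1{\Gamma((s+1)/2)}\big|_{s=-2l-1}=\tfrac12(-1)^l l!$. The integral factor can have a pole at $s=-2l-1$ only through $j=n+(2l+1)\kappa$, with residue $2a_{n+(2l+1)\kappa}$. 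Multiplying the simple zero against this at-worst-simple pole produces the finite value
\[
	\eta_A(-2l-1)=2a_{n+(2l+1)\kappa}(A,A^2)\cdot\tfrac12(-1)^l l!=(-1)^l l!\,a_{n+(2l+1)\kappa}(A,A^2),
\]
which is \eqref{E:etaA.value}. Finally, $j-\kappa=n+2l\kappa\equiv n\pmod2$, so for odd $n$ the coefficient $a_{n+(2l+1)\kappa}$ vanishes, the integral factor is regular at $s=-2l-1$, and the zero of $1/\Gamma$ forces $\eta_A(-2l-1)=0$. The two delicate points to carry out rigorously are the term-by-term integration of the expansion (via the explicit remainder estimate) and this zero/pole cancellation; both are standard once the Mellin identity is in place.
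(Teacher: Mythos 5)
Your proposal is correct and follows essentially the same route as the paper: the Mellin-transform identity $\Gamma\bigl(\tfrac{s+1}2\bigr)\eta_A(s)=\int_0^\infty t^{(s-1)/2}\tr\bigl(Ae^{-tA^2}\bigr)\,dt$, exponential decay at infinity, term-by-term integration of the small-time expansion to get simple poles at $s=(n-j)/\kappa$ with residues $2a_j$, division by the Gamma factor for \eqref{E:etaA.res}, and the zero/pole cancellation against $1/\Gamma\bigl(\tfrac{s+1}2\bigr)$ via \eqref{E:Gamma.res} for the values \eqref{E:etaA.value}. The parity bookkeeping ($a_j=0$ for $j-\kappa$ odd, and $j-\kappa\equiv n\pmod 2$ when $j=n+(2l+1)\kappa$) matches the paper's as well.
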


	\begin{proof}
		Via Mellin transform we get from \eqref{E:eta.def}
		\begin{equation}\label{E:Mellin}
			\eta_A(s)=\frac1{\Gamma\left(\frac{s+1}2\right)}\int_0^\infty t^{(s+1)/2}\tr\left(Ae^{-tA^2}\right)\frac{dt}t,\qquad\Re s>n/\kappa.
		\end{equation}
		The integral converges at infinity since there exists $c>0$ such that $\tr(Ae^{-tA^2})=O(e^{-ct})$ as $\to\infty$.
		For $\Re s>n/\kappa$ the convergence at zero follows from the asymptotic expansion in \eqref{E:trAe-tA2}.
		Moreover,
		\begin{equation}\label{E:eta.cont}
			\eta_A(s)=\frac1{\Gamma\left(\frac{s+1}2\right)}\left(\sum_{j=0}^{J-1}\frac{2\cdot a_j(A,A^2)}{s+(j-n)/\kappa}+h_J(s)\right)
		\end{equation}
		where $h_J(s)$ is holomorphic for $\Re s>(n-J)/\kappa$.
		This provides the analytic continuation of $\eta_A(s)$ with simple poles and residues as in \eqref{E:etaA.res}.
		The special values in \eqref{E:etaA.value} can be read off using 
		\begin{equation}\label{E:Gamma.res}
			\frac1{\Gamma(s)}=(-1)^ll!\cdot(s+l)+O\bigl((s+l)^2\bigr),\qquad\text{as $s\to-l$.}
		\end{equation}
		Using $\Gamma(\frac12)=\sqrt\pi$ we obtain \eqref{E:eta.A.res0} from \eqref{E:etaA.res} by putting $j=n$.
	\end{proof}

	The eta function might not be regular at $s=0$ in this generality, contrary to the Riemannian case, cf.~\cite{APS76} or \cite[\S1.10]{G84}.
	We define the eta invariant of $A$ as the constant part in the Laurent expansion at the origin, 
	\begin{equation}\label{E:etaA.def}
		\eta(A):=\FP_{s=0}\eta_A(s).
	\end{equation}

	We have a variation formula analoguous to the one in \cite[Prop~2.10]{APS76}, \cite[\S9]{BHR07} or \cite[Lemma~1.10.2]{G84}.

	\begin{proposition}\label{P:dot.A}
		Suppose $A_u$ is a smooth family of (formally) self-adjoint Rockland differential operators of Heisenberg order $\kappa>0$ 
		such that $\ker(A_u)$ has constant dimension where $u\in\mathbb R$.
		Then
		\begin{equation}\label{E:eta.dot}
			\tfrac\partial{\partial u}\eta_{A_u}(s)=-s\cdot\tr\left(\dot A_u|A_u|^{-s-1}\right)
		\end{equation}
		where $\dot A_u:=\frac\partial{\partial u}A_u$.
		In particular, $\res_{s=0}\eta_{A_u}(s)$ is constant in $u$ and 
		\[
			\tfrac\partial{\partial u}\eta(A_u)=-2\cdot\frac{a_n(\dot A_u,A_u^2)}{\sqrt\pi}
		\]
		where $a_j(\dot A_u,A_u^2)$ denote the constants in the asymptotic expansion 
		\begin{equation}\label{E:Adot.asymp}
			\tr\bigl(\dot A_ue^{-tA^2_u}\bigr)
			\sim\sum_{j=0}^\infty t^{(j-\kappa-n)/2\kappa}a_j(\dot A_u,A_u^2),\qquad\text{as $t\to0$,}
		\end{equation}
		and $a_j(\dot A_u,A_u^2)=0$ if $j-\kappa$ is odd.
	\end{proposition}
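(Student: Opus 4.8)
The plan is to follow the heat-kernel strategy of \cite[\S1.10]{G84} and \cite[\S9]{BHR07}, carrying out the trace manipulations in the Heisenberg calculus and exploiting that every operator built out of $A_u$ commutes with $A_u$. Throughout I fix $u$ and abbreviate $A=A_u$, $\dot A=\dot A_u$, $B=A^2$, and let $P_0$ denote the orthogonal projection onto $\ker A=\ker B$. Since $\ker(A_u)$ has constant (finite) dimension and $A_u$ has compact resolvent, the eigenvalue $0$ stays isolated with constant multiplicity, so standard perturbation theory shows that $P_0$ depends smoothly on $u$. Differentiating $A_uP_0=0$ gives $\dot A_uP_0=-A_u\dot P_0$, whence, using $P_0A_u=0$ and cyclicity,
\[
	\tr\bigl(\dot A_uP_0\bigr)=\tr\bigl(P_0\dot A_uP_0\bigr)=-\tr\bigl(P_0A_u\dot P_0\bigr)=0.
\]
This is the single place where the constant-kernel hypothesis is used, and it is what renders the large-time behaviour of the heat traces below harmless.

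First I would start from the Mellin representation \eqref{E:Mellin} of $\eta_{A_u}(s)$, valid for $\Re s>n/\kappa$, and differentiate under the integral. Duhamel's formula $\partial_ue^{-tB}=-\int_0^te^{-(t-\sigma)B}\dot B\,e^{-\sigma B}\,d\sigma$ with $\dot B=\dot A\,A+A\,\dot A$, together with cyclicity of the (trace-class) trace and $[A,e^{-\sigma B}]=0$, collapses the $\sigma$-integral and yields
\[
	\partial_u\tr\bigl(Ae^{-tB}\bigr)=\tr\bigl(\dot A\,e^{-tB}\bigr)-2t\cdot\tr\bigl(A^2e^{-tB}\dot A\bigr).
\]
Writing $\phi(t):=\tr\bigl(\dot A(e^{-tB}-P_0)\bigr)$, the vanishing above gives $\tr(\dot A\,e^{-tB})=\phi(t)$, while the second summand equals $2t\,\phi'(t)$; moreover $\phi(t)\to0$ exponentially as $t\to\infty$ and $\phi$ has the small-time expansion \eqref{E:Adot.asymp} (the analogue of \eqref{E:trAe-tA2} with $\dot A$ in front, a local heat-calculus fact). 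The trace-class and smoothing properties needed to legitimise differentiation under the integral and the trace, Duhamel, and cyclicity are exactly those supplied by the Heisenberg parametrix for $B$ \cite{DH22,DH20}, which also ensure smooth dependence on $u$.

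Next I would substitute into \eqref{E:Mellin}, split the integral, and integrate by parts in $t$. The boundary terms vanish for $\Re s>n/\kappa$, and $2\int_0^\infty t^{(s+1)/2}\phi'(t)\,dt=-(s+1)\int_0^\infty t^{(s+1)/2}\phi(t)\,\tfrac{dt}t$. Since $\tfrac1{\Gamma((s+1)/2)}\int_0^\infty t^{(s+1)/2}\phi(t)\tfrac{dt}t=\tr\bigl(\dot A|A|^{-s-1}\bigr)=:\zeta_u(s)$, the two contributions combine to $\bigl(1-(s+1)\bigr)\zeta_u(s)=-s\,\zeta_u(s)$, which establishes \eqref{E:eta.dot} first for $\Re s>n/\kappa$ and then for all $s$ by analytic continuation, both sides being meromorphic. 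The continuation and pole structure of $\zeta_u(s)$ follow verbatim from the proof of Proposition~\ref{P:eta.cont} with $\dot A_u$ in place of $A_u$; in particular $\res_{s=0}\zeta_u(s)=2\,a_n(\dot A_u,A_u^2)/\sqrt\pi$.

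Finally, the two consequences follow by comparing Laurent expansions at $s=0$. Owing to the explicit factor $s$, the right-hand side $-s\,\zeta_u(s)$ is regular at $s=0$ with value $-\res_{s=0}\zeta_u(s)$. Matching principal parts of $\partial_u\eta_{A_u}(s)$ forces $\partial_u\res_{s=0}\eta_{A_u}(s)=0$, so the residue is constant in $u$; matching constant terms and recalling $\eta(A_u)=\FP_{s=0}\eta_{A_u}(s)$ from \eqref{E:etaA.def} gives $\partial_u\eta(A_u)=-2\,a_n(\dot A_u,A_u^2)/\sqrt\pi$, in agreement with \eqref{E:eta.A.res0}. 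The interchange of $\partial_u$ with the extraction of Laurent coefficients is justified by the smooth $u$-dependence of the locally computable coefficients $a_j(\,\cdot\,,A_u^2)$. I expect the main obstacle to be the rigorous bookkeeping of these trace manipulations in the subelliptic setting --- verifying that each operator product above is trace class, that Duhamel and cyclicity apply, and that differentiation under the integral and the trace is legitimate --- rather than the essentially formal algebra, which mirrors the Riemannian case.
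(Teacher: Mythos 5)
Your proposal is correct and follows essentially the same route as the paper's proof: the kernel projection argument giving $\tr(\dot A_u\Pi_u)=0$ (the paper derives the slightly stronger operator identity $\Pi_u\dot A_u\Pi_u=0$), Duhamel plus cyclicity and $[A_u,e^{-tA_u^2}]=0$ to get $\partial_u\tr(A_ue^{-tA_u^2})=\tr(\dot A_ue^{-tA_u^2})+2t\,\partial_t\tr(\dot A_ue^{-tA_u^2})$, integration by parts in the Mellin integral to produce the factor $-s$, and then analytic continuation and comparison of Laurent expansions at $s=0$ exactly as in Proposition~\ref{P:eta.cont}. The only differences are notational (your $\phi(t)=\tr(\dot A_u(e^{-tA_u^2}-\Pi_u))$ versus the paper's $\tr(\dot A_ue^{-tA_u^2})$, which coincide), so no further comment is needed.
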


	\begin{proof}
		The asymptotic expansion in \eqref{E:Adot.asymp} follows from \cite[Lemma~2.18]{H22}.
		Let $\Pi_u$ denote the orthogonal projection onto $\ker(A_u)$.
		Note that $\Pi_u$ depends smoothly on $u$ since we assumed that $\ker(A_u)$ has constant dimension.
		Differentiating $A_u\Pi_u=0$ we obtain $\dot A_u\Pi_u+A_u\dot\Pi_u=0$ and then $\Pi_u\dot A_u\Pi_u=0$, for we also have $\Pi_uA_u=0$ by symmetry of $A_u$.
		Hence, there exists $c>0$ such that $\tr\bigl(\dot A_ue^{-tA^2_u}\bigr)=O(e^{-ct})$ as $t\to\infty$.

		In view of the latter estimate Duhamel's formula applied to \eqref{E:Mellin} yields
		\begin{align*}
			\tfrac\partial{\partial u}\eta_{A_u}(s)
			&=\frac1{\Gamma\left(\frac{s+1}2\right)}\int_0^\infty t^{(s+1)/2}\left(\tr\left(\dot A_ue^{-tA^2_u}\right)
			-2t\tr\left(\dot A_uA_u^2e^{-tA^2_u}\right)\right)\frac{dt}t
			\\&=\frac1{\Gamma\left(\frac{s+1}2\right)}\int_0^\infty t^{(s+1)/2}\left(\tr\left(\dot A_ue^{-tA^2_u}\right)
			+2t\frac\partial{\partial t}\tr\left(\dot A_ue^{-tA^2_u}\right)\right)\frac{dt}t
		\end{align*}
		for $\Re s>n/\kappa$, and integration by parts leads to, cf.~\cite[Lemma~1.10.2]{G84},
		\[
			\tfrac\partial{\partial u}\eta_{A_u}(s)
			=\frac{-s}{\Gamma\left(\frac{s+1}2\right)}\int_0^\infty t^{(s+1)/2}\tr\left(\dot A_ue^{-tA^2_u}\right)\frac{dt}t
			=-s\cdot\tr\left(\dot A_u|A_u|^{-s-1}\right),
		\]
		whence \eqref{E:eta.dot} for $\Re s>n/\kappa$.

		Proceeding as in the proof of Proposition~\ref{P:eta.cont} we see that $\tr\bigl(\dot A_u|A_u|^{-s-1}\bigr)$ 
		extends to a meromorphic function with simple poles on the entire complex plane and 
		\begin{equation}\label{E:res.dot.0}
			\res_{s=0}\tr\bigl(\dot A_u|A_u|^{-s-1}\bigr)
			=2\cdot\frac{a_n(\dot A_u,A_u^2)}{\sqrt\pi}.
		\end{equation}
		By analyticity \eqref{E:eta.dot} remains true for all $s$.
		Combining \eqref{E:res.dot.0} with \eqref{E:eta.dot} we obtain
		\[
			\tfrac\partial{\partial u}\eta_{A_u}(s)=-2\cdot\frac{a_n(\dot A_u,A_u^2)}{\sqrt\pi}+O(s),\qquad\text{as $s\to0$.}
		\]
		Whence, the remaining assertions in the propositions, cf.~\eqref{E:etaA.def}.
	\end{proof}

	\begin{remark}\label{R:resA.Wod}
		The residue $\res_{s=0}\eta_A(s)$ is locally computable in light of~\eqref{E:eta.A.res0}.
		Using Proposition~\ref{P:dot.A} one readily shows that this residue only depends on the homotopy class of the Heisenberg principal symbol of $A$.
		Moreover,
		\begin{equation}\label{E:etaA.Wod}
			\kappa\cdot\res_{s=0}\eta_A(s)
			=\tau(\sign(A))
		\end{equation}
		where the right hand side denotes the analogue of Wodzicki's noncommutative residue \cite{W87} in the Heisenberg calculus of the operator 
		\[
			\sign(A)=A|A|^{-1}=A(A^2)^{-1/2}
		\]
		cf.~\cite{P07} and \cite[\S7]{DH20} or \cite{CY24} for the generality required here.
		To see this note that $\sign(A)$ is a pseudodifferential operator of Heisenberg order zero and recall that its noncommutative residue can be expressed in the form \cite[p.~377]{DH20}
		\[
			\tau(\sign(A))
			=2\kappa\cdot\res_{s=0}\tr\bigl(\sign(A)(A^2)^{-s}\bigr).
		\]
		Moreover, from~\eqref{E:etaA.def} we have $\eta_A(s)=\tr\bigl(\sign(A)(A^2)^{-s/2}\bigr)$, whence \eqref{E:etaA.Wod}.
	\end{remark}

\section{The eta invariant of a (2,3,5) distribution}\label{S:eta235}

	Let $\mathcal D\subseteq TM$ be a generic rank two distribution on a closed 5-manifold $M$.
	Recall that such a distribution induces a filtration of the tangent bundle as indicated in \eqref{E:TM.filt}.
	The associated graded bundle $\mathfrak tM$, cf.~\eqref{E:tM}, is a bundle of graded nilpotent Lie algebras when equipped with the fiberwise bracket induced by the Lie bracket of vector fields.
	The osculating algebras, i.e., the fibers of $\mathfrak tM$ are all isomorphic to the graded nilpotent Lie algebra $\goe$ in \eqref{E:goe.intro} with brackets as in \eqref{E:brackets.intro}.

	Let $F$ be a flat complex vector bundle over $M$.
	Twisting the Rumin complex \cite{R99,R01,R05} with $F$ we obtain a sequence of differential operators $D_q=D_{\mathcal D,q}^F$,
	\begin{equation}\label{E:Rumin.complex}
		\Gamma(E_0)\xrightarrow[k_0=1]{D_0}
		\Gamma(E_1)\xrightarrow[k_1=3]{D_1}
		\Gamma(E_2)\xrightarrow[k_2=2]{D_2}
		\Gamma(E_3)\xrightarrow[k_3=3]{D_3}
		\Gamma(E_4)\xrightarrow[k_4=1]{D_4}
		\Gamma(E_5)
	\end{equation}
	where $E_q:=\mathcal H^q(\mathfrak tM)\otimes F$ and $\mathcal H^q(\mathfrak tM)$ denotes the real vector bundle obtained from the bundle of osculating algebras, 
	$\mathfrak tM$, by passing to fiberwise Lie algebra cohomology with trivial real coefficients.
	The Betti numbers are $\rk\mathcal H^q(\mathfrak tM)=\dim H^q(\mathfrak g)=1,2,3,3,2,1$ for $q=0,\dotsc,5$, 
	and the Rumin differential $D_q$ has Heisenberg order $k_q=1,3,2,3,1$ for $q=0,\dotsc,4$, as indicated below the arrows in \eqref{E:Rumin.complex}.
	For more details we refer to \cite[\S5]{BEGN19}, \cite[Example~4.21]{DH22}, and \cite[\S3.1]{H22}.

	The Rumin complex in \eqref{E:Rumin.complex} computes de~Rham cohomology \cite{R99,R01,R05}.
	More precisely, we have $D_{q+1}D_q=0$ and there exist (non-canonical, in this formulation) differential operators $L_q\colon\Gamma(E_q)\to\Omega^q(M;F)$
	intertwining the Rumin differential with the de~Rham differential and inducing canonical isomorphisms \cite[Lemma~3.2]{H22}
	\begin{equation}\label{E:HRumin}
		\frac{\ker D_q}{\img D_{q-1}}=H^q(M;F).
	\end{equation}

	Let $g$ be a graded fiberwise Euclidean metric $g$ on $\mathfrak tM$, i.e., the sum in \eqref{E:tM} is assumed to be orthogonal.
	Let $\partial_q^*\colon\Lambda^{q+1}\mathfrak t^*M\to\Lambda^q\mathfrak t^*M$ denote the fiberwise adjoint of 
	the Chevalley--Eilenberg Lie algebra codifferential $\partial_q\colon\Lambda^q\mathfrak t^*M\to\Lambda^{q+1}\mathfrak t^*M$
	with respect to the induced fiberwise Euclidean metrics on $\Lambda^q\mathfrak t^*M$.
	Fiberwise Hodge decomposition yields an induced fiberwise Euclidean metric on
	\begin{equation}\label{E:HtM}
		\mathcal H^q(\mathfrak tM)
		=\frac{\ker\partial_q}{\img\partial_{q-1}}
		=\ker(\partial_{q-1}^*)\cap\ker(\partial_q).
	\end{equation}
	Moreover, $g$ provides a volume density on $M$ via the canonical isomorphism in~\eqref{E:tTM5}.
	Together with a fiberwise Hermitian metric $h$ on $F$ we obtain fiberwise Hermitian metrics on the bundles $E_q$ and, in turn, 
	standard $L^2$-inner products on $\Gamma(E_q)$ which gives rise to formal adjoints of the Rumin differentials denoted by $D_q^*=D_{\mathcal D,F,g,h,q}^*$.

	Let $\mathfrak o$ be an orientation of $M$.
	Note that the isomorphisms in \eqref{E:tM}, \eqref{E:tM123}, and \eqref{E:tTM5} provide a canonical one-to-one correspondence between  
	orientations of $\mathfrak t_{-1}M$, 
	orientations of $\mathfrak t_{-2}M$, 
	orientations of $\mathfrak t_{-3}M$, 
	orientations of $\mathfrak tM$, and
	orientations of $M$.
	The Hodge star operator associated with $g$ and $\mathfrak o$ preserves the subbundle on the right hand side in \eqref{E:HtM}.
	Twisting with $F$ we obtain fiberwise isometries $\star_q=\star_{\mathcal D,g,\mathfrak o,q}^F\colon E_q\to E_{5-q}$ 
	satisfying $\star^2=1$, since $\dim M=5$ is odd, see \cite[\S2]{R99}, \cite[Prop~2.8]{R01} or \cite[\S3.3]{H22} for details.
	Hence, 
	\[
		\star_q^{-1}=\star_q^*=\star_{5-q}.
	\]
	Assuming, moreover, $\nabla h=0$ we have $D_2^*=-\star_3D_2\,\star_3$.
	Hence, the operator
	\begin{equation}\label{E:S}
		S=S^{M,F}_{\mathcal D,g,\mathfrak o}
		=\mathbf i\star_3^FD_2^F
		=\bigl(\mathbf i\star_3D_2\bigr)^F
	\end{equation}
	acting on $\Gamma(E)$, where we abbreviate $E:=E_2$, satisfies
	\begin{equation}\label{E:S*S}
		S^*=S\qquad\text{as well as}\qquad S^2=D_2^*D_2.
	\end{equation}
	Note that $S$ and $D_q^*$ do not depend on $h$ for we assumed $h$ to be parallel.

	Proposition~\ref{P:eta.cont} is not directly applicable to $S$ since this is not a Rockland operator.
	However, $S$ is closely related to the Rumin--Seshadri \cite{RS12} type operator 
	\begin{equation}\label{E:Delta2.def}
		\Delta:=(D_1D_1^*)^2+(D_2^*D_2)^3
	\end{equation}
	which is a Rockland operator, cf.~\cite[Lemma~2.14]{DH22}.

	\begin{proposition}\label{P:etaSs}
		The operator $S$ has an infinite dimensional kernel but the remaining spectrum consists of isolated eigenvalues of finite multiplicity only.
		Moreover, $|S|^{-s}$ is trace class for $\Re s>5$ and the eta function,
		\begin{equation}\label{E:etaS}
			\eta_S(s):=\tr\left(S|S|^{-s-1}\right)=\sum_{\lambda\in\spec_*(S)}\sign(\lambda)|\lambda|^{-s},\qquad\Re s>5,
		\end{equation}
		extends to a meromorphic function on the entire complex plane with isolated simple poles that can only be located at $s=(10-j)/2$ with $j=0,2,4,\dotsc$
		The residues are
		\begin{equation}\label{E:etaS.res}
			\res_{s=\frac{10-j}2}\eta_S(s)
			=6\cdot\frac{a_j(S^{1+2r},\Delta)}{\Gamma\left(\frac{12+4r-j}{12}\right)},\qquad j,r\in\mathbb N_0
		\end{equation}
		where $a_j(S^{1+2r},\Delta)$ denote the coefficients in the asymptotic expansion
		\begin{equation}\label{E:SDelta.asym}
			\tr\left(S^{1+2r}e^{-t\Delta}\right)
			\sim\sum_{j=0}^\infty t^{(j-12-4r)/12}a_j(S^{1+2r},\Delta),\qquad\text{as $t\to0$,}
		\end{equation}
		and $a_j(S^{1+2r},\Delta)=0$ for odd $j$.
		Furthermore, $\eta_S(s)$ is regular at negative odd integers and
		\begin{equation}\label{E:etaS.value}
			\eta_S(-1-2r-6l)=(-1)^ll!\cdot a_{12+4r+12l}(S^{1+2r},\Delta),\qquad l,r\in\mathbb N_0.
		\end{equation}
	\end{proposition}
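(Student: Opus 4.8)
The plan is to reduce everything to the Rockland operator $\Delta$ of \eqref{E:Delta2.def}, since $S$ is not Rockland but $S^2=D_2^*D_2$ ties the two together. First I would record the orthogonal Hodge-type decomposition $\Gamma(E)=\overline{\img D_1}\oplus\ker\Delta\oplus\overline{\img D_2^*}$, where $\ker\Delta=\ker D_1^*\cap\ker D_2$ is the finite dimensional space of harmonic sections and $\ker D_2=\overline{\img D_1}\oplus\ker\Delta$. Both $S$ and $\Delta$ preserve this decomposition. On $\ker D_2$ the operator $S=\mathbf i\star_3D_2$ vanishes, which produces the infinite dimensional kernel. On the complement $(\ker S)^\perp=\overline{\img D_2^*}$ the relation $D_1^*D_2^*=(D_2D_1)^*=0$ makes $D_1D_1^*$ vanish, so there $\Delta=(D_2^*D_2)^3=S^6$ while $S$ is self-adjoint and invertible. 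Since $\Delta$ is Rockland of Heisenberg order $12$ on $M$ of homogeneous dimension $n=10$, it has compact resolvent (Section~\ref{S:eta}). Because $\Delta=S^6$ on $(\ker S)^\perp$, the closure of $S$ is self-adjoint with discrete nonzero spectrum of finite multiplicities tending to infinity, which together with the kernel yields the stated spectral description. As $|S|^{-s}$ is the compression of $\Delta^{-s/6}$ to $(\ker S)^\perp$, it is trace class for $\Re(s/6)>n/12=5/6$, that is $\Re s>5$.

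The key point is that $S$ and $\Delta$ commute and that $|S|=\Delta^{1/6}$ on $(\ker S)^\perp$. Writing $S^{1+2r}=S|S|^{2r}$ and using that complex powers vanish on kernels, I obtain for every $r\in\mathbb N_0$ the operator identity
\[
	S^{1+2r}\Delta^{-(s+1+2r)/6}=S|S|^{-s-1}
\]
(both sides zero on $\ker S$), hence $\eta_S(s)=\tr\bigl(S^{1+2r}\Delta^{-(s+1+2r)/6}\bigr)$ for all $r$. Via the Mellin transform this reads
\[
	\eta_S(s)=\frac1{\Gamma\left(\frac{s+1+2r}6\right)}\int_0^\infty t^{(s+1+2r)/6}\tr\bigl(S^{1+2r}e^{-t\Delta}\bigr)\frac{dt}t,\qquad\Re s>5.
\]
Convergence at infinity follows from the positive spectral gap of $\Delta$ on $(\ker S)^\perp$, giving $\tr\bigl(S^{1+2r}e^{-t\Delta}\bigr)=O(e^{-ct})$; convergence at zero for $\Re s$ large comes from the short time expansion \eqref{E:SDelta.asym}, which I would quote from \cite[Lemma~2.18]{H22}, together with the vanishing $a_j(S^{1+2r},\Delta)=0$ for odd $j$ (the Heisenberg order $2+4r$ of $S^{1+2r}$ being even).

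From here I follow the proof of Proposition~\ref{P:eta.cont} almost verbatim. Splitting the integral at $t=1$ and substituting \eqref{E:SDelta.asym}, the $j$-th term contributes a simple pole at $s=(10-j)/2$: since $\int_0^1 t^{(2s+j-10)/12}\frac{dt}t=6/\bigl(s-(10-j)/2\bigr)$, dividing by $\Gamma$ yields the residue $6a_j(S^{1+2r},\Delta)/\Gamma\bigl(\tfrac{12+4r-j}{12}\bigr)$ of \eqref{E:etaS.res}, and poles occur only at even $j$ because $a_j$ vanishes for odd $j$. For the values at negative odd integers I evaluate at $s_0=-1-2r-6l$, where $\tfrac{s_0+1+2r}6=-l$ makes $1/\Gamma$ vanish to first order while the term $j=12+4r+12l$ supplies a simple pole; by \eqref{E:Gamma.res} the two combine into the finite value $(-1)^ll!\,a_{12+4r+12l}(S^{1+2r},\Delta)$ of \eqref{E:etaS.value}. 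Since every negative odd integer is of the form $-1-2r-6l$ (take $l=0$, $r=k$ for $s=-1-2k$), this simultaneously gives regularity there.

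The \emph{main obstacle} is the passage from the non-Rockland operator $S$ to the Rockland operator $\Delta$: I must check with care that the orthogonal decomposition is preserved by both operators, that $S$ and $\Delta$ genuinely commute, and that complex powers and the heat semigroup respect the infinite dimensional kernel of $S$ as opposed to the finite dimensional kernel $\ker\Delta$, so that the restrictions to $(\ker S)^\perp$ are legitimate and trace class. The only genuinely analytic input beyond the existence of the expansion \eqref{E:SDelta.asym} is the parity statement $a_j(S^{1+2r},\Delta)=0$ for odd $j$; I would isolate and derive it from the symmetry of the Heisenberg heat trace coefficients as in \cite[Lemma~2.18]{H22}, exactly as in the analogous claim underlying \eqref{E:trAe-tA2}.
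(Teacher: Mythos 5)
Your proposal is correct and takes essentially the same approach as the paper: both reduce to the Rockland operator $\Delta$ via the relations $S(D_1D_1^*)=0=(D_1D_1^*)S$ and $\Delta=S^6$ on the orthogonal complement of $\ker S$, establish the key identity $\eta_S(s)=\tr\bigl(S^{1+2r}\Delta^{-(s+1+2r)/6}\bigr)$, and then run the Mellin transform and heat-expansion argument of Proposition~\ref{P:eta.cont} to read off the poles, residues \eqref{E:etaS.res}, and special values \eqref{E:etaS.value} from the Gamma factor. The only cosmetic difference is that you package the reduction as a Hodge-type orthogonal decomposition, whereas the paper works with the equivalent operator identities \eqref{E:D1S}, \eqref{E:Delta2.S6}, and the spectral splitting \eqref{E:spec.ASD}.
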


	\begin{proof}
		Combining \eqref{E:S*S} and \eqref{E:Delta2.def} we obtain
		\begin{equation}\label{E:Delta2.S6}
			\Delta=(D_1D_1^*)^2+S^6.
		\end{equation}
		Using $D_2D_1=0$ and \eqref{E:S} we get $\img D_1\subseteq\ker S$ and
		\begin{equation}\label{E:D1S}
			(D_1D_1^*)S=0=S(D_1D_1^*).
		\end{equation}
		In particular, $S$ has infinite dimensional kernel.
		As $\Delta$ is a Rockland operator of Heisenberg order $12$ it has compact resolvent and discrete spectrum \cite[Lemma~1]{DH20}.
		Using \eqref{E:Delta2.S6} and \eqref{E:D1S} we conclude that
		\begin{equation}\label{E:spec.ASD}
			\spec_*(\Delta)=\spec_*(D_1D_1^*)^2\sqcup\spec_*(S^6).
		\end{equation}
		Hence, the nonzero spectrum of $S$ is discrete.
		Moreover, $\Delta^{-s}$ is trace class for $\Re s>10/12$, see \cite[Cor~2]{DH22}.
		Combining this with \eqref{E:spec.ASD} we conclude that $|S|^{-s}$ is trace class for $\Re s>5$.

		On the image of $S$ the operator $\Delta$ coincides with $S^6$ by \eqref{E:Delta2.S6} and \eqref{E:D1S}.
		For any $r\in\mathbb N_0$ we thus get from \eqref{E:etaS}
		\begin{equation}\label{E:etaS.Delta}
			\eta_S(s)
			=\tr\Bigl(S^{1+2r}(S^6)^{-(s+1+2r)/6}\Bigr)
			=\tr\left(S^{1+2r}\Delta^{-(s+1+2r)/6}\right).
		\end{equation}
		The asymptotic expansion in \eqref{E:SDelta.asym} follows from \cite[Lemma~2.18]{H22}.
		Moreover, since $\ker\Delta\subseteq\ker S$ there exists $c>0$ such that $\tr(S^{1+2r}e^{-t\Delta})=O(e^{-ct})$ as $t\to\infty$.
		Hence, via Mellin transform
		\begin{equation}\label{E:etaS.Mellin}
			\eta_S(s)
			=\frac1{\Gamma\left(\frac{s+1+2r}6\right)}\int_0^\infty t^{(s+1+2r)/6}\tr\left(S^{1+2r}e^{-t\Delta}\right)\frac{dt}t,\qquad\Re s>5.
		\end{equation}
		Moreover,
		\begin{equation}\label{E:eta.S.cont}
			\eta_S(s)=\frac1{\Gamma\left(\frac{s+1+2r}6\right)}\left(\sum_{j=0}^{J-1}\frac{6\cdot a_j(S^{1+2r},\Delta)}{s+(j-10)/2}+h_J(s)\right)
		\end{equation}
		where $h_J(s)$ is holomorphic for $\Re s>(10-J)/2$.
		This provides the analytic continuation of $\eta_S$ with simple poles and residues as in \eqref{E:etaS.res}.
		The special values in \eqref{E:etaS.value} can be read off using~\eqref{E:Gamma.res}.
	\end{proof}

	\begin{proposition}\label{P:eta.S.var}
		Suppose $\mathcal D_u$ is a smooth family of (2,3,5) distributions on $M$ and 
		suppose $g_u$ is a smooth family of fiberwise graded Euclidean inner products on the corresponding smooth family of bundles of osculating algebras $\mathfrak t_uM$ where $u\in\mathbb R$.
		Let $S_u$ and $\Delta_u$ denote the operators associated to this data, an orientation of $M$, 
		and a unitary flat complex vector bundle $F$, cf.~\eqref{E:S} and \eqref{E:Delta2.def}.
		Moreover, put $\dot S_u:=\frac\partial{\partial u}S_u$ with respect to a smooth trivialization of the family of bundles of osculating algebras, 
		$\bigsqcup_{u\in\mathbb R}\mathfrak t_uM\cong\mathfrak tM\times\mathbb R$.
		Then
		\begin{equation}\label{E:etaS.dot}
			\tfrac\partial{\partial u}\eta_{S_u}(s)=-s\cdot\tr\left(\dot S_u\Delta_u^{-(s+1)/6}\right).
		\end{equation}
		Moreover,  
		\begin{equation}\label{E:etaS.var}
			\tfrac\partial{\partial u}\eta_{S_u}(s)=-6\cdot\frac{a_{10}(\dot S_u,\Delta_u)}{\Gamma\left(\frac16\right)}+O(s)\quad\text{as $s\to0$}
		\end{equation}
		where $a_j(\dot S_u,\Delta_u)$ denote the coefficients in the asymptotic expansion 
		\begin{equation}\label{E:Sdot.asymp}
			\tr\bigl(\dot S_ue^{-t\Delta_u}\bigr)\sim\sum_{j=0}^\infty t^{(j-12)/12}a_j(\dot S_u,\Delta_u),\qquad\text{as $t\to0$}
		\end{equation}
		and $a_j(\dot S_u,\Delta_u)=0$ for odd $j$.
	\end{proposition}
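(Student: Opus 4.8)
The plan is to adapt the argument of Proposition~\ref{P:dot.A}, accounting for the fact that $S_u$ is not Rockland and that $\Delta_u$ is not the square of $S_u$, but agrees with $S_u^6$ on $\img S_u$, cf.~\eqref{E:Delta2.S6} and \eqref{E:D1S}. The asymptotic expansion \eqref{E:Sdot.asymp}, including the parity vanishing $a_j(\dot S_u,\Delta_u)=0$ for odd $j$, follows from \cite[Lemma~2.18]{H22} as in the proof of Proposition~\ref{P:etaSs}; since $\dot S_u$ has Heisenberg order two, $\dot S_ue^{-t\Delta_u}$ is trace class for $t>0$.

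First I would establish $\tr(\dot S_ue^{-t\Delta_u})=O(e^{-ct})$ as $t\to\infty$, which ensures convergence of the Mellin integral at infinity. Let $\Pi_u$ be the orthogonal projection onto $\ker\Delta_u$; since the nonzero eigenvalues of $\Delta_u$ are bounded away from zero, it suffices to show $\tr(\dot S_u\Pi_u)=0$. By Hodge theory and \eqref{E:HRumin}, $\ker\Delta_u\cong H^2(M;F)$, so $\dim\ker\Delta_u$ is independent of $u$ and $\Pi_u$ is a smooth family of projections. Moreover $\ker\Delta_u\subseteq\ker S_u$ (see the proof of Proposition~\ref{P:etaSs}), hence $S_u\Pi_u=0=\Pi_uS_u$. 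Differentiating $S_u\Pi_u=0$ and multiplying by $\Pi_u$ on the left gives $\Pi_u\dot S_u\Pi_u=0$, whence $\tr(\dot S_u\Pi_u)=\tr(\Pi_u\dot S_u\Pi_u)=0$, just as in Proposition~\ref{P:dot.A}.

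The heart of the matter, and the step I expect to be the main obstacle, is the heat-trace identity
\[
	\tr\bigl(S_u\dot\Delta_ue^{-t\Delta_u}\bigr)=6\cdot\tr\bigl(\dot S_u\Delta_ue^{-t\Delta_u}\bigr).
\]
Abbreviate $P:=D_1D_1^*$, so that $\Delta_u=P^2+S_u^6$ with $PS_u=0=S_uP$ by \eqref{E:Delta2.S6} and \eqref{E:D1S}; in particular $S_u$ commutes with $\Delta_u$, hence with $e^{-t\Delta_u}$. Differentiating $S_uP=0$ gives $\dot S_uP=-S_u\dot P$. The contribution of $\tfrac\partial{\partial u}(S_u^6)=\sum_{k=0}^5S_u^k\dot S_uS_u^{5-k}$ to the left hand side is $\sum_{k=0}^5\tr(S_u^{k+1}\dot S_uS_u^{5-k}e^{-t\Delta_u})=6\tr(\dot S_uS_u^6e^{-t\Delta_u})$ by cyclicity and $[S_u,e^{-t\Delta_u}]=0$, which produces the factor $6$; the contribution of $\tfrac\partial{\partial u}(P^2)$ reduces, via $S_uP=0$ and $\dot S_uP=-S_u\dot P$, to $-\tr(\dot S_uP^2e^{-t\Delta_u})$. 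This last term vanishes:
\[
	\tr\bigl(\dot S_uP^2e^{-t\Delta_u}\bigr)=-\tr\bigl(S_u\dot P\,Pe^{-t\Delta_u}\bigr)=-\tr\bigl(\dot P\,PS_ue^{-t\Delta_u}\bigr)=0,
\]
using $\dot S_uP=-S_u\dot P$, cyclicity with $[S_u,e^{-t\Delta_u}]=0$, and finally $PS_u=0$. Hence the left hand side equals $6\tr(\dot S_uS_u^6e^{-t\Delta_u})$, and the same vanishing turns the right hand side into $6\tr(\dot S_uS_u^6e^{-t\Delta_u})$, establishing the identity.

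With this identity the argument closes as in Proposition~\ref{P:dot.A}. Duhamel's formula for $e^{-t\Delta_u}$, combined with $[S_u,e^{-t\Delta_u}]=0$ and cyclicity, gives $\tfrac\partial{\partial u}\tr(S_ue^{-t\Delta_u})=\tr(\dot S_ue^{-t\Delta_u})-t\tr(S_u\dot\Delta_ue^{-t\Delta_u})$; the identity above and $\Delta_ue^{-t\Delta_u}=-\tfrac\partial{\partial t}e^{-t\Delta_u}$ turn this into
\[
	\tfrac\partial{\partial u}\tr\bigl(S_ue^{-t\Delta_u}\bigr)=\tr\bigl(\dot S_ue^{-t\Delta_u}\bigr)+6t\tfrac\partial{\partial t}\tr\bigl(\dot S_ue^{-t\Delta_u}\bigr).
\]
Substituting into \eqref{E:etaS.Mellin} with $r=0$ and integrating by parts in $t$, cf.~\cite[Lemma~1.10.2]{G84}, yields \eqref{E:etaS.dot} for $\Re s>5$, and this persists for all $s$ by analytic continuation. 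Finally, proceeding as in the proof of Proposition~\ref{P:eta.cont}, the function $\tr(\dot S_u\Delta_u^{-(s+1)/6})$ extends meromorphically with a simple pole at $s=0$ of residue $6\,a_{10}(\dot S_u,\Delta_u)/\Gamma(\tfrac16)$, coming from the $j=10$ term of \eqref{E:Sdot.asymp}; multiplying by $-s$ gives \eqref{E:etaS.var}.
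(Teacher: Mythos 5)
Your proof is correct and follows essentially the same route as the paper: Duhamel's formula combined with the identities $S_uB_u=0=B_uS_u$ (for $B_u=D_1D_1^*$), their $u$-derivatives, commutativity of $S_u$ with $e^{-t\Delta_u}$, and cyclicity of the trace to cancel the $B_u^2$-contribution and extract the factor $6$, followed by Mellin transform and the expansion \eqref{E:Sdot.asymp} to read off the behaviour at $s=0$. The only difference is cosmetic: you carry out the cancellation at the level of heat traces before integrating, whereas the paper Mellin-transforms first and performs the same algebra on complex powers of $\Delta_u$.
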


	\begin{proof}
		Specializing \eqref{E:etaS.Delta} and \eqref{E:etaS.Mellin} to $r=0$ we obtain
		\[
			\eta_{S_u}(s)
			=\tr\left(S_u\Delta_u^{-(s+1)/6}\right)
			=\frac1{\Gamma\left(\frac{s+1}6\right)}\int_0^\infty t^{(s+1)/6}\tr\left(S_ue^{-t\Delta_u}\right)\frac{dt}t,
			\quad\Re s>5.
		\]
		The Hodge decomposition in \cite[Cor~2.16]{DH22} and \eqref{E:HRumin} yield
		\[
			\ker\Delta_u=\ker D_{1,u}^*\cap\ker D_{2,u}=\frac{\ker D_{2,u}}{\img D_{1,u}}=H^2(M;F).
		\]
		Hence, the dimension of $\ker\Delta_u$ is constant in $u$.
		Note that $\Delta_u$ and $S_u$ commute, cf.~\eqref{E:Delta2.S6} and \eqref{E:D1S}. 
		Proceeding as in the proof of Proposition~\ref{P:dot.A} we use Duhamel's formula to obtain
		\begin{equation}\label{E:qwerty1}
			\tfrac\partial{\partial u}\eta_{S_u}(s)
			=\tr\left(\dot S_u\Delta_u^{-(s+1)/6}\right)-\frac{s+1}6\cdot\tr\left(S_u\dot\Delta_u\Delta_u^{-(s+1)/6-1}\right),
			\quad\Re s>5
		\end{equation}
		where $\dot\Delta_u:=\frac\partial{\partial u}\Delta_u$.
		Putting $B_u:=D_{u,1}D_{u,1}^*$ we have, cf.~\eqref{E:Delta2.S6}, 
		\begin{equation}\label{E:Deltau.SB}
			\Delta_u=B_u^2+S_u^6.
		\end{equation}
		Differentiating we obtain
		\[
			\dot\Delta_u=\dot B_uB_u+B_u\dot B_u+\sum_{k=0}^5S_u^k\dot S_uS_u^{5-k}
		\]
		where $\dot B_u:=\frac\partial{\partial u}B_u$.
		Plugging this into \eqref{E:qwerty1}, observing that $B_u$ commutes with $\Delta_u$, and using $B_sS_u=0=S_uB_u$, cf.~\eqref{E:D1S}, we obtain
		\begin{equation}\label{E:qwerty2}
			\tfrac\partial{\partial u}\eta_{S_u}(s)
			=\tr\left(\dot S_u\Delta_u^{-(s+1)/6}\right)-(s+1)\cdot\tr\left(\dot S_uS_u^6\Delta_u^{-(s+1)/6-1}\right),
			\quad\Re s>5.
		\end{equation}
		Differentiating $S_uB_u=0$ we obtain $\dot S_uB_u+S_u\dot B_u=0$ and then $B_u\dot S_uB_u=0$.
		Combining this with \eqref{E:Deltau.SB} and \eqref{E:qwerty2} we obtain \eqref{E:etaS.dot} for $\Re s>5$.
		By analyticity this equation remains true for all $s$.

		The asymptotic expansion in \eqref{E:Sdot.asymp} follows from \cite[Lemma~2.18]{H22}.
		Let $\Pi_u$ denote the orthogonal projection onto $\ker\Delta_u$.
		Note that $\Pi_u$ depends smoothly on $u$ since $\ker\Delta_u$ has constant dimension.
		Differentiating $S_u\Pi_u=0$ we obtain $\dot S_u\Pi_u+S_u\dot\Pi_u=0$ and then $\Pi_u\dot S_u\Pi_u=0$, for we also have $\Pi_uS_u=0$ by symmetry of $S_u$.
		Hence, there exists $c>0$ such that $\tr\bigl(\dot S_ue^{-t\Delta_u}\bigr)=O(e^{-ct})$ as $t\to\infty$.
		From \eqref{E:etaS.dot} we have
		\begin{equation}\label{E:ddu.eta.Su}
			\tfrac\partial{\partial u}\eta_{S_u}(s)=\frac{-s}{\Gamma\left(\frac{s+1}6\right)}\int_0^\infty t^{(s+1)/6}\tr\left(\dot S_ue^{-t\Delta_u}\right)\frac{dt}t.
		\end{equation}
		Combining this with the asymptotic expansion in \eqref{E:Sdot.asymp} we get
		\begin{equation}\label{E:ddu.eta.Su.cont}
			\tfrac\partial{\partial u}\eta_{S_u}(s)
			=\frac{-s}{\Gamma\left(\frac{s+1}6\right)}\left(\sum_{j=0}^{J-1}\frac{6\cdot a_j(\dot S_u,\Delta_u)}{s+(j-10)/2}+h_{J,u}(s)\right)
		\end{equation}
		where $h_{J,u}(s)$ is holomorphic for $\Re s>(10-J)/2$.
		Using $J\geq11$ we get \eqref{E:etaS.var}.
	\end{proof}

	\begin{proposition}\label{P:loc.quant}
		The coefficients $a_j(S^{1+2r},\Delta)$ in \eqref{E:SDelta.asym} and $a_j(\dot S_u,\Delta_u)$ in \eqref{E:Sdot.asymp}  all vanish.
	\end{proposition}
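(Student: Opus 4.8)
The plan is to prove that every one of these coefficients agrees with the corresponding coefficient for the \emph{trivial} flat bundle $F_\triv=M\times\mathbb C^{\rk F}$, and that the latter all vanish because the spectrum of $S$ is symmetric about zero in this case. Hence I would not attempt a direct computation of the local invariants, but instead exploit the tension between \emph{local computability} of the coefficients and \emph{global spectral symmetry} of the untwisted operator.

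First I would invoke local computability. By \cite[Lemma~2.18]{H22} --- the same result underlying the expansions \eqref{E:SDelta.asym} and \eqref{E:Sdot.asymp} --- each coefficient $a_j(S^{1+2r},\Delta)$ and $a_j(\dot S_u,\Delta_u)$ is the integral over $M$ of a density computed pointwise from the jets of the Heisenberg symbols of the operators involved. These operators are assembled from the Rumin differentials $D_q$, the Hodge star $\star$, and the flat connection on $F$ together with its parallel metric $h$. Since $F$ is flat with $\nabla h=0$, every point of $M$ has a neighborhood carrying a parallel orthonormal frame of $F$; in such a frame the twisted operators are $\rk F$ decoupled copies of their untwisted counterparts, so the integrand densities, and hence their pointwise traces, coincide with those for $F_\triv$. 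This yields
\[
	a_j(S^{1+2r},\Delta)^F=a_j(S^{1+2r},\Delta)^{F_\triv}
	\quad\text{and}\quad
	a_j(\dot S_u,\Delta_u)^F=a_j(\dot S_u,\Delta_u)^{F_\triv}.
\]

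Next I would establish the spectral symmetry for $F_\triv$. As $\mathcal H^2(\mathfrak tM)$ is a real bundle and the operators $\star_3$ and $D_2$ preserve real sections, fiberwise complex conjugation $C$ on $E=\mathcal H^2(\mathfrak tM)\otimes\mathbb C^{\rk F}$ satisfies $CSC=-S$, cf.~\eqref{E:S}. Thus $S\psi=\lambda\psi$ forces $S(C\psi)=-\lambda\,C\psi$, so $C$ interchanges the $\lambda$- and $(-\lambda)$-eigenspaces; the nonzero spectrum of $S$ is symmetric about zero with equal multiplicities and $\eta_S^{F_\triv}(s)\equiv0$, cf.~\cite[Remark~3a, p~61]{APS75}. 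It then remains to read the coefficients off this identically vanishing function. Since all residues vanish, \eqref{E:etaS.res} gives $a_j(S^{1+2r},\Delta)^{F_\triv}=0$ whenever $\Gamma\bigl(\tfrac{12+4r-j}{12}\bigr)$ is finite, i.e.\ for all indices except $j=12+4r+12l$ with $l\in\mathbb N_0$; for precisely these exceptional indices the special value \eqref{E:etaS.value} gives $0=\eta_S(-1-2r-6l)=(-1)^ll!\,a_{12+4r+12l}(S^{1+2r},\Delta)^{F_\triv}$. Hence $a_j(S^{1+2r},\Delta)^{F_\triv}=0$ for all $j,r$. For the variation coefficients I would note that $\eta_{S_u}^{F_\triv}\equiv0$ for every $u$, so $\partial_u\eta_{S_u}^{F_\triv}\equiv0$, whence $\tr\bigl(\dot S_u\Delta_u^{-(s+1)/6}\bigr)^{F_\triv}\equiv0$ by \eqref{E:etaS.dot}; applying the same residue and special-value bookkeeping to \eqref{E:ddu.eta.Su.cont} yields $a_j(\dot S_u,\Delta_u)^{F_\triv}=0$. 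Combined with the first step this gives the vanishing for arbitrary $F$.

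The main obstacle I anticipate is organizing this last extraction so that every index is covered exactly once: the residue formula \eqref{E:etaS.res} is vacuous precisely at the indices $j=12+4r+12l$ where $1/\Gamma$ vanishes, and these are exactly the indices governed by the special values \eqref{E:etaS.value}, so the two mechanisms must be dovetailed and the freedom in $r$ used to sweep out all relevant $j$. A secondary point requiring care is the reduction in the first step: one must verify that local computability genuinely produces the \emph{same} pointwise density for $F$ and $F_\triv$, which hinges on the existence of local parallel orthonormal frames and on the twisted operators decoupling into copies of the untwisted ones in such frames.
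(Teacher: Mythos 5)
Your proposal is correct and follows essentially the same route as the paper's proof: compare the coefficients with those of the trivial bundle $F_\triv$ via local computability (the twisted operators decouple locally in a parallel orthonormal frame), observe that the spectrum of $S_\triv$ is symmetric about zero since $\star_3D_2$ is real and skew-adjoint so that $\eta_{S_\triv}\equiv0$, and then read off the vanishing of all coefficients from \eqref{E:etaS.res}, \eqref{E:etaS.value}, and \eqref{E:ddu.eta.Su.cont}. Your explicit dovetailing of the residue formula with the special values at the exceptional indices $j=12+4r+12l$ is exactly the bookkeeping the paper leaves implicit (note only that this works for each fixed $r$ separately; no sweeping over $r$ is needed).
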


	\begin{proof}
		Let $S_\triv=(\mathbf i\star_3D_2)^{F_\triv}$ denote the operator associated with the trivial flat vector bundle $F_\triv=M\times\mathbb C^{\rk F}$, cf.~\eqref{E:S}.
		As the untwisted operator $\star_3D_2$ is skew-adjoint and real, the spectrum of $S_\triv$ is symmetrical about zero, cf.~\cite[Remark~3a, p~61]{APS75}.
		Hence, the eta function of $S_\triv$ vanishes identically, cf.~\eqref{E:etaS.intro}.
		In view of \eqref{E:etaS.res} and \eqref{E:etaS.value} the coefficients $a_j(S^{1+2r},\Delta)$ must all vanish for $F_\triv$.
		\footnote{
			Alternatively, the latter can be read of \eqref{E:eta.S.cont}. It may also be deduce directly by observing that 
			$\tr(S^{1+2r}e^{-t\Delta})$ vanishes identically as the spectrum of $S^{1+2r}e^{-t\Delta}$ is symmetrical about zero too, cf.~\eqref{E:D1S} and \eqref{E:spec.ASD}.
		}
		Locally, $S=(\mathbf i\star_3D_2)^F$ and $S_\triv$ are the same.
		As the coefficients $a_j(S^{1+2r},\Delta)$ can be computed locally, they must be the same for $F$ and $F_\triv$.
		Hence, $a_j(S^{1+2r},\Delta)=0$ for all unitary flat bundles $F$.
		The vanishing of $a_j(\dot S_u,\Delta_u)$ can be proved along the same lines using \eqref{E:ddu.eta.Su.cont}.
	\end{proof}

	Combining Propositions~\ref{P:etaSs}, \ref{P:eta.S.var} and \ref{P:loc.quant} we obtain Theorem~\ref{T:eta.S.hinv}.
	In particular, the eta function $\eta_S(s)$ is regular at $s=0$ and the eta invariant of $S$ may be defined by $\eta(S)=\eta_S(0)$, cf.~\eqref{E:eta.S.def.intro}.

	\begin{remark}\label{E:etaS.ori}
		Reversing the orientation of $M$ changes $\star_3$ by a factor of $-1$.
		Hence, $S$, $\eta_S(s)$, and $\eta(S)$ also change by a factor of $-1$ if the orientation is reversed, cf.~\eqref{E:S}, \eqref{E:etaS}, and \eqref{E:eta.S.def.intro}.
	\end{remark}

\section{The eta invariant in irreducible unitary representations}\label{S:reps}

	Recall that $G$ denotes the simply connected nilpotent Lie group with Lie algebra $\goe$, cf.~\eqref{E:goe.intro}.
	Let $X_1,\dotsc,X_5$ be a graded basis of $\goe$ satisfying the relations in \eqref{E:brackets.intro}.
	In this section we use harmonic analysis on $G$ to study the operator $S$ associated with the left invariant (2,3,5) distribution $\mathcal D_G$ spanned by the vector fields $X_1$ and $X_2$.
	More precisely, we will study the eta function of $S$ in irreducible unitary representations of $G$.

	Using left translation to trivialize the bundle of osculating algebras, $\mathfrak tG=G\times\goe$, and the bundle $\mathcal H^q(\mathfrak tG)=G\times H^q(\goe)$, 
	we may consider the corresponding (left invariant) Rumin differential as
	\[
		D_2\in\mathcal U^{-2}(\goe)\otimes L\bigl(H^2(\goe),H^3(\goe)\bigr).
	\]
	Here $H^q(\goe)$ denotes the Lie algebra cohomology of $\goe$ with trivial real coefficients 
	and the grading on the universal enveloping algebra $\mathcal U(\goe)$ is induced from the grading automorphism of $\goe$.
	With respect to a particular basis of $H^q(\goe)$ described in \cite[\S3.3]{H23a} we have
	\begin{equation}\label{E:D2}
		D_2=\begin{pmatrix}-X_1X_2-X_3&X_1X_1/\sqrt2&0\\-X_2X_2/\sqrt2&-\tfrac32X_3&X_1X_1/\sqrt2\\0&-X_2X_2/\sqrt2&X_2X_1-X_3\end{pmatrix}
	\end{equation}
	where $X_1,\dotsc,X_5$ are considered as left invariant differential operators on $G$.

	Let $g$ be a graded Euclidean inner product on $\goe$ and let $\mathfrak o$ be an orientation of $\goe$.
	Note that the Lie brackets in~\eqref{E:brackets.intro} provide natural one-to-one correspondences between 
	orientations of $\goe_{-1}$, orientations of $\goe_{-2}$, orientations of $\goe_{-3}$, and orientations of $\goe$.
	Recall that $g_G$ denotes the corresponding left invariant fiber wise graded Euclidean inner product on $\mathfrak tG$ and $\mathfrak o_G$ denotes the corresponding orientation of $G$.
	Via the aforementioned left trivialization $\mathcal H^q(\mathfrak tG)=G\times H^q(\goe)$ 
	the associated Hodge star operator $\star_3=\star^G_{\mathcal D_G,g_G,\mathfrak o_G,3}\colon\mathcal H^3(\mathfrak tG)\to\mathcal H^2(\mathfrak tG)$
	coincides with the Hodge star operator $\star_3=\star^\goe_{g,\mathfrak o,3}\colon H^3(\goe)\to H^2(\goe)$ associated with $g$ and $\mathfrak o$.
	Hence, we may consider, cf.~\eqref{E:S},
	\begin{equation}\label{E:SG.def}
		S=S^G_{g,\mathfrak o}=S^{G,\mathbb C}_{\mathcal D_G,g_G,\mathfrak o_G}=\mathbf i\star_3D_2\in\mathcal U^{-2}(\goe)\otimes\eend\bigl(H^2(\goe)\otimes\mathbb C\bigr),
	\end{equation}
	and, cf.~\eqref{E:Delta2.def},
	\begin{equation}\label{E:DG.def}
		\Delta=\Delta^G_g=\Delta^{G,\mathbb C}_{\mathcal D_G,g_G}=(D_1D_1^*)^2+(D_2^*D_2)^3\in\mathcal U^{-12}(\goe)\otimes\eend\bigl(H^2(\goe)\otimes\mathbb C\bigr).
	\end{equation}

	From \eqref{E:brackets.intro} we see that restriction provides an isomorphism
	\begin{equation}\label{E:Aut.gr}
		\Aut_\grr(\goe)=\GL(\goe_{-1})\cong\GL_2(\mathbb R)
	\end{equation}
	that also identifies the orientation preserving subgroups, $\Aut^+_\grr(\goe)=\GL^+(\goe_{-1})$.
	If $\phi\in\Aut_\grr(\goe)$ is a graded automorphism then, by naturality,
	\[
		\phi\cdot S^G_{g,\mathfrak o}=S^G_{\phi\cdot g,\phi\cdot\mathfrak o}
	\]
	where the dots denote the natural left actions.
	Hence,
	\begin{equation}\label{E:rho.S.phi}
		(\rho\circ\phi)\bigl(S^G_{g,\mathfrak o}\bigr)=\rho\bigl(S^G_{\phi\cdot g,\phi\cdot\mathfrak o}\bigr)
	\end{equation}
	for every unitary representation $\rho$ of $G$ on a Hilbert space $\mathcal H$. 
	
	Put $g_{ij}:=g(X_i,X_j)$.
	Up to a graded isomorphism of $\goe$ we may assume that $X_1,X_2$ is a positively oriented orthonormal basis of $\goe_{-1}$, cf.~\eqref{E:Aut.gr}.
	By the principal axis theorem we may, moreover, assume that $b_g$ in \eqref{E:bg} is diagonal in this basis, i.e., $b_g(X_1,X_2)=0$, equivalently, $g(X_4,X_5)=0$.
	In this case $a_g=g_{33}$, cf.~\eqref{E:ag}.
	Moreover, the Euclidean inner products $g$ and $b_g$ are represented by the matrices
	\begin{equation}\label{E:gbg}
		g=\begin{pmatrix}1\\&1\\&&g_{33}\\&&&g_{44}\\&&&&g_{55}\end{pmatrix}
		\qquad\text{and}\qquad
		b_g=\frac1{g_{33}}\begin{pmatrix}g_{44}\\&g_{55}\end{pmatrix}
	\end{equation}
	with respect to the basis $X_1,\dotsc,X_5$ of $\goe$ and $X_1,X_2$ of $\goe_{-1}$, respectively.
	With respect to the aforementioned basis of $H^q(\goe)$ the induced Hermitian inner product $h_2$ on $H^2(\goe)\otimes\mathbb C$ is
	\[
		h_2
		=\frac1{\sqrt{g_{44}g_{55}}}
		\begin{pmatrix}
			\sqrt{g_{55}/g_{44}}\\
			&\frac{(g_{44}+g_{55})/2}{\sqrt{g_{44}g_{55}}}\\
			&&\sqrt{g_{44}/g_{55}}
		\end{pmatrix}
	\]
	see \cite[\S3.3]{H23a}, and the Hodge star operator $\star_3\colon H^3(\goe)\to H^2(\goe)$ is
	\begin{equation}\label{E:star3}
		\star_3=\star_2^{-1}=\star_2^*=
		\frac1{\sqrt{g_{33}}}\begin{pmatrix}&&\sqrt{g_{44}/g_{55}}\\&-\frac{\sqrt{g_{44}g_{55}}}{(g_{44}+g_{55})/2}\\\sqrt{g_{55}/g_{44}}\end{pmatrix}
	\end{equation}
	see \cite[\S3.4]{H23a}.

	For an irreducible unitary representation $\rho$ of $G$ on a Hilbert space $\mathcal H$ put
	\begin{equation}\label{E:eta.rho.def}
		\eta_{\rho(S)}(s):=\tr\left(\rho(S)|\rho(S)|^{-s-1}\right)=\sum_{\lambda\in\spec_*(\rho(S))}\sign(\lambda)|\lambda|^{-s}
	\end{equation}
	and
	\[
		\eta(\rho(S)):=\eta_{\rho(S)}(0).
	\]
	Below we will show that \eqref{E:eta.rho.def} converges for $\Re s$ sufficiently large, depending on the representation $\rho$, 
	and that $\eta_{\rho(S)}(s)$ extends to a meromorphic function on the entire complex plane which is holomorphic at $s=0$.
	This will be accomplished using the Rockland operator $\Delta$ in \eqref{E:Delta2.def}.
	As in \eqref{E:etaS.Delta} and \eqref{E:etaS.Mellin}, for $\Re s $ sufficiently large,
	\begin{align}\notag
		\eta_{\rho(S)}(s)
		&=\tr\left(\rho(S)\rho(\Delta)^{-(s+1)/6}\right)
		\\\label{E:etaS.Delta.rho}
		&=\frac1{\Gamma\left(\frac{s+1}6\right)}\int_0^\infty t^{(s+1)/6}\tr\left(\rho(S)e^{-t\rho(\Delta)}\right)\frac{dt}t.
	\end{align}
	The asymptotic expansion of the trace in the integrand, to be established in Lemma~\ref{L:asym.rho} below, will yield the analytic continuation of $\eta_{\rho(S)}$.
	
	The unitary dual of the Lie group $G$ has been described explicitly by Dixmier in \cite[Prop~8]{D58}. 
	There are three types of irreducible (strongly continuous) unitary representations of $G$ 
	which we now describe in terms of a graded basis $X_1,\dotsc,X_5$ of $\goe$ satisfying \eqref{E:brackets.intro}.
	Notationally, we do not distinguish between a representation of $G$ and the corresponding infinitesimal representation of $\goe$.

	\begin{enumerate}[(I)]
		\item	\emph{Scalar representations:}
			For $(\alpha,\beta)\in\mathbb R^2$ there is an irreducible unitary representation $\rho_{\alpha,\beta}$ of $G$ on $\mathbb C$ such that
			\begin{equation}\label{E:rep.scalar}
				\rho_{\alpha,\beta}(X_1)=2\pi\mathbf i\alpha,\qquad
				\rho_{\alpha,\beta}(X_2)=2\pi\mathbf i\beta,
			\end{equation}
			and $\rho_{\alpha,\beta}(X_3)=\rho_{\alpha,\beta}(X_4)=\rho_{\alpha,\beta}(X_5)=0$.
			These are the irreducible unitary representations which factor through the abelianization $G/[G,G]\cong\mathbb R^2$.
		\item	\emph{Schr\"odinger representations:}
			For $0\neq\hbar\in\mathbb R$ there is an irreducible unitary representation $\rho_\hbar$ of $G$ on $L^2(\mathbb R)=L^2(\mathbb R,d\theta)$ such that
			\begin{equation}\label{E:Schroedinger}
				\rho_\hbar(X_1)=\tfrac\partial{\partial\theta},\qquad
				\rho_\hbar(X_2)=2\pi\mathbf i\hbar\cdot\theta,\qquad
				\rho_\hbar(X_3)=2\pi\mathbf i\hbar,
			\end{equation}
			and $\rho_\hbar(X_4)=\rho_\hbar(X_5)=0$.
			These are the irreducible unitary representations which factor through the 3-dimensional Heisenberg group $H=G/C$ but do not factor through the abelianization $G/[G,G]$.
		\item	\emph{Generic representations:}
			For real numbers $\lambda,\mu,\nu$ with $(\lambda,\mu)\neq(0,0)$ there is an irreducible unitary representation 
			$\rho_{\lambda,\mu,\nu}$ of $G$ on $L^2(\mathbb R)=L^2(\mathbb R,d\theta)$ such that:
			\begin{align}
				\notag
				\rho_{\lambda,\mu,\nu}(X_1)
				&=\frac\lambda{(\lambda^2+\mu^2)^{1/3}}\cdot\partial_\theta-\frac{2\pi\mathbf i\mu}{(\lambda^2+\mu^2)^{1/3}}\cdot\frac{\theta^2+\nu(\lambda^2+\mu^2)^{-2/3}}2
				\\\notag
				\rho_{\lambda,\mu,\nu}(X_2)
				&=\frac\mu{(\lambda^2+\mu^2)^{1/3}}\cdot\partial_\theta+\frac{2\pi\mathbf i\lambda}{(\lambda^2+\mu^2)^{1/3}}\cdot\frac{\theta^2+\nu(\lambda^2+\mu^2)^{-2/3}}2
				\\\label{E:rep.gen.X3}
				\rho_{\lambda,\mu,\nu}(X_3)
				&=2\pi\mathbf i(\lambda^2+\mu^2)^{1/3}\cdot\theta,
				\\\notag
				\rho_{\lambda,\mu,\nu}(X_4)
				&=2\pi\mathbf i\lambda,
				\\\notag
				\rho_{\lambda,\mu,\nu}(X_5)
				&=2\pi\mathbf i\mu.
			\end{align}
			This differs from the representation given in \cite[Eq~(24)]{D58} by a conjugation with a unitary scaling on $L^2(\mathbb R)$ 
			which we have introduced for better compatibility with the grading automorphism. 
	\end{enumerate}
	The representations listed above are mutually nonequivalent, and they comprise all equivalence classes of irreducible unitary representations of $G$.

	\begin{lemma}\label{L:asym.rho}
		Let $V$ finite dimensional complex vector space equipped with a Hermitian inner product, and suppose $A=A^*\in\mathcal U^{-2\kappa}(\goe)\otimes\eend(V)$ is 
		a left invariant positive Rockland differential operator on the 5-dimensional Lie group $G$ which is homogeneous of order $2\kappa>0$ with respect to the grading automorphism.
		Then, for every left invariant differential operator which is homogeneous of order $r\geq0$ with respect to the grading automorphism, 
		$B\in\mathcal U^{-r}(\goe)\otimes\eend(V)$, the following hold true:
		\begin{enumerate}[(I)]
			\item	For all nontrivial irreducible unitary representations $\rho$ of $G$ and all $N$,
				\begin{equation}\label{E:asymp.infty}
					\tr\left(\rho(B)e^{-t\rho(A)}\right)=O(t^{-N}),
				\end{equation}
				as $t\to\infty$.
			\item	In the Schr\"odinger representation $\rho_\hbar$,
				\begin{equation}\label{E:asymp.Schroedinger}
					\tr\left(\rho_\hbar(B)e^{-t\rho_\hbar(A)}\right)
					\sim t^{-r/2\kappa}\sum_{j=0}^\infty t^{(j-1)/\kappa}d_j,
				\end{equation}
				as $t\to0$ with coefficients $d_j=d_{A,B,\hbar,j}$.
			\item	In the generic representation $\rho_{\lambda,\mu,\nu}$ we have
				\begin{equation}\label{E:asymp.gen}
					\tr\left(\rho_{\lambda,\mu,\nu}(B)e^{-t\rho_{\lambda,\mu,\nu}(A)}\right)
					\sim t^{-r/2\kappa}\sum_{j=0}^\infty t^{(j-3)/4\kappa}a_j,
				\end{equation}
				as $t\to0$.
				Moreover, the coefficients $a_j=a_{A,B,\lambda,\mu,\nu,j}$ vanish for odd $j$.
		\end{enumerate}
	\end{lemma}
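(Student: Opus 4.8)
The plan is to deduce all three statements from the small-time, respectively large-time, behaviour of the scalar heat traces $t\mapsto\tr\bigl(\rho(B)e^{-t\rho(A)}\bigr)$, using that in every nontrivial irreducible representation $\rho(A)$ is an explicit positive self-adjoint operator with discrete spectrum. Since $A$ is a positive Rockland operator, $\rho(A)$ has trivial kernel and compact resolvent whenever $\rho$ is nontrivial, so its spectrum is discrete and bounded below by some $\lambda_0(\rho)>0$; a relative bound $\rho(B)\rho(A)^{-N}\in\mathcal B(\mathcal H)$ for $N$ large then shows that $\rho(B)e^{-t\rho(A)}$ is trace class for every $t>0$, so that the integrand in \eqref{E:etaS.Delta.rho} is well defined. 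The organising principle throughout is dilation equivariance: writing $\delta_\epsilon\in\Aut_\grr(\goe)$ for the grading automorphism acting by $\epsilon^k$ on $\goe_{-k}$, homogeneity gives $\delta_\epsilon A=\epsilon^{2\kappa}A$ and $\delta_\epsilon B=\epsilon^rB$, while \eqref{E:rho.S.phi} intertwines $\rho\circ\delta_\epsilon$ with a rescaled representation. This both reduces each family of representations to a reference member and constrains the admissible powers of $t$.

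For part~(I) I would argue directly from the spectral gap: factoring $\rho(B)e^{-t\rho(A)}=\bigl(\rho(B)e^{-\frac t2\rho(A)}\bigr)e^{-\frac t2\rho(A)}$, the first factor is bounded uniformly for $t\ge1$ and the second is trace class with trace $O\bigl(e^{-\frac t2\lambda_0(\rho)}\bigr)$, whence $\tr\bigl(\rho(B)e^{-t\rho(A)}\bigr)=O(e^{-ct})$ as $t\to\infty$ for some $c>0$. This is stronger than \eqref{E:asymp.infty} and treats the scalar, Schrödinger and generic cases uniformly.

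For part~(II) the Schrödinger representation \eqref{E:Schroedinger} sends $X_1\mapsto\partial_\theta$ and $X_2\mapsto2\pi\mathbf i\hbar\theta$ to first-order operators and $X_3\mapsto2\pi\mathbf i\hbar$ to a constant, so $\rho_\hbar(A)$ is a positive, globally elliptic Shubin operator of order $2\kappa$ on $L^2(\mathbb R)$ and $\rho_\hbar(B)$ one of order $r$. Because the weight $2\kappa$ forces an even number of first-order factors, $\rho_\hbar(A)$ is an \emph{even} element of the Shubin calculus, so Shubin's theory of complex powers and heat traces gives an expansion of $\tr\bigl(\rho_\hbar(B)e^{-s\rho_\hbar(A)}\bigr)$ proceeding in steps of $s^{1/\kappa}$ rather than $s^{1/2\kappa}$. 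Conjugating by the unitary dilation $U_\epsilon f(\theta)=\epsilon^{1/2}f(\epsilon\theta)$ identifies $\rho_\hbar\circ\delta_\epsilon$ with $\rho_{\epsilon^2\hbar}$ and yields the scaling identity $\tr\bigl(\rho_\hbar(B)e^{-t\rho_\hbar(A)}\bigr)=t^{-r/2\kappa}g\bigl(t^{1/\kappa}\hbar\bigr)$, where $g(\sigma)=\tr\bigl(\rho_\sigma(B)e^{-\rho_\sigma(A)}\bigr)$ expands in integer powers of $\sigma$ starting from $\sigma^{-1}$. Matching powers gives \eqref{E:asymp.Schroedinger} together with the dependence of the coefficients $d_j$ on $\hbar$.

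Part~(III) is the crux. In the generic representation \eqref{E:rep.gen.X3} the operators $\rho_{\lambda,\mu,\nu}(X_1)$ and $\rho_{\lambda,\mu,\nu}(X_2)$ contain the term $\theta^2$, so their leading Shubin symbols are proportional to $\theta^2$ and $\rho_{\lambda,\mu,\nu}(A)$ is \emph{not} Shubin-elliptic of order $4\kappa$: its principal symbol degenerates along $\{\theta=0\}$. The operator is nevertheless positive with discrete spectrum, and I would read off \eqref{E:asymp.gen} from the anisotropic Weyl--Hörmander calculus in which $\theta$ carries weight $1$ and $\xi=-\mathbf i\partial_\theta$ carries weight $2$ (equivalently, from the anharmonic-oscillator analysis of Helffer and Robert, the model being $\xi^2+\theta^4$ when $\kappa=1$). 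In this calculus $\rho_{\lambda,\mu,\nu}(A)$ is elliptic of weight $4\kappa$, and it is precisely this anisotropy---$\theta$ weighing half of $\xi$---that replaces $2\kappa$ by $4\kappa$ in the exponents of \eqref{E:asymp.gen}. I expect the main difficulty to be the vanishing of the odd coefficients $a_j$, which is the familiar phenomenon that the heat trace of an even anharmonic oscillator proceeds in steps of $s^{1/2\kappa}$ rather than $s^{1/4\kappa}$: it does not come from a unitary symmetry of $\rho_{\lambda,\mu,\nu}$ itself (the only self-equivalences of a generic representation are scalar), but from a reflection parity of the rescaled model operator combined with the vanishing of the total-derivative terms in the phase-space integral computing the trace. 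Establishing this cancellation rigorously, within a calculus tailored to the degenerate principal symbol, is the step I would budget the most effort for; by contrast parts~(I) and~(II) are comparatively routine.
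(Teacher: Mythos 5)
The paper's own ``proof'' of this lemma is a two\-/line citation: the case $r=0$ is quoted from \cite[Thm~5]{H23a} and the generalization to $r\geq0$ is asserted to be routine. Your argument is therefore a genuinely independent route, and its architecture is the right one: exponential decay from the spectral gap for (I); an isotropic Shubin calculus for the Schr\"odinger representations; a $(1,2)$-weighted anisotropic calculus for the generic ones, with the leading exponents $-(2+r)/2\kappa$ and $-(3+2r)/4\kappa$ coming from the phase-space homogeneous dimensions $2$ and $3$. Part~(I) is correct as sketched, granting the standard facts that $e^{-t\rho(A)}=\rho(p_t)$ is trace class and $\rho(B)\rho(A)^{-N}$ is bounded.

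The main gap is that in both (II) and (III) the global ellipticity of $\rho(A)$ is asserted rather than proved, and this is precisely the point where the Rockland hypothesis must enter; nothing about the mere shape of the operators $\rho(X_i)$ gives it. Since $A$ is homogeneous of weight $2\kappa$, only its monomials lying purely in $X_1,X_2$ contribute to the top (an)isotropic order, so the principal symbol of $\rho(A)$ at $(\theta,\xi)$ is the commutative evaluation of that part, i.e.\ it equals $\rho_{\alpha,\beta}(A)\in\eend(V)$ for the \emph{scalar} representation with $(\alpha,\beta)=(\xi/2\pi,\hbar\theta)$ in case (II), and with $(\alpha,\beta)=\bigl(\lambda\xi-\pi\mu\theta^2,\ \mu\xi+\pi\lambda\theta^2\bigr)/\bigl(2\pi(\lambda^2+\mu^2)^{1/3}\bigr)$ in case (III); in either case $(\alpha,\beta)\neq(0,0)$ whenever $(\theta,\xi)\neq(0,0)$, so invertibility is exactly the Rockland condition at the scalar representations. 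This step is not cosmetic: for an operator whose pure $X_1,X_2$ part is built from $X_1X_2+X_2X_1$, say, the Shubin symbol degenerates along $\{\theta\xi=0\}$ --- and, consistently, such an operator fails the Rockland condition at $\rho_{\alpha,0}$ and $\rho_{0,\beta}$. Your proof needs this identification spelled out before any of the calculus machinery applies.

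On the step you flag as the hardest, the vanishing of the odd $a_j$ in (III): the mechanism you gesture at does work, but the relevant reflection is $\theta\mapsto-\theta$ alone (an antisymplectic map), not the symplectic point reflection $(\theta,\xi)\mapsto(-\theta,-\xi)$. Under it a monomial $\theta^a\xi^b$ of anisotropic weight $d=a+2b$ has parity $(-1)^a=(-1)^d$, so every polynomial Weyl symbol has components whose parity equals the parity of their weight; each Moyal step is odd under this reflection and lowers the weight by $3$, an odd number, so this compatibility propagates through the parametrix construction, and the phase-space integrals producing the odd-weight coefficients vanish. (The same argument, with the symplectic reflection, is what upgrades the step size to $s^{1/\kappa}$ in (II); ``evenness of $\rho_\hbar(A)$'' alone is not quite the statement you need, since the insertion $\rho_\hbar(B)$ must be brought into the parity bookkeeping as well.) Alternatively, all three expansions, including the parity in (III), fall out of Kirillov's character formula: $\tr\bigl(\rho(B)e^{-t\rho(A)}\bigr)$ is the integral over the coadjoint orbit of $\ell\mapsto\widehat{(Bp_1)\circ\exp}\bigl(\delta^*_{t^{1/2\kappa}}\ell\bigr)$, the generic orbits are the parabolic cylinders $\{\ell_4,\ell_5\ \text{const},\ \ell_5\ell_1-\ell_4\ell_2+\tfrac12\ell_3^2=\text{const}\}$, and Taylor expansion of the Schwartz function $\widehat{(Bp_1)\circ\exp}$ produces quarter-powers only through the $\ell_3$-direction, in which the orbit is even --- whence the odd coefficients vanish. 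That route is shorter than building an anisotropic calculus and is presumably closer to the cited proof in \cite{H23a}.
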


	\begin{proof}
		The case $r=0$ has been established in \cite[Thm~5]{H23a}.
		The proof there readily generalizes to $r\geq0$.
	\end{proof}

	\begin{lemma}\label{L:eta.scalar}
		Suppose $\rho=\rho_{\alpha,\beta}$ is an irreducible unitary scalar representation of $G$.
		Then, with respect to any graded Euclidean inner product $g$ on $\goe$ and either orientation $\mathfrak o$ of $\goe$, 
		we have $\eta_{\rho(S)}(s)=0$ and $\eta(\rho(S))=0$ where $S=S^G_{g,\mathfrak o}$, cf.~\eqref{E:SG.def}.
	\end{lemma}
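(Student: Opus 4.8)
The plan is to exploit the fact that in a scalar representation the Hilbert space $\mathcal H=\mathbb C$ is one dimensional, so that the subelliptic machinery of Lemma~\ref{L:asym.rho} is not needed at all. Indeed $\rho(S)$ then acts on the finite dimensional space $H^2(\goe)\otimes\mathbb C$; it is a self-adjoint operator on a three dimensional Hermitian vector space, its spectrum consists of at most three real eigenvalues, and the sum defining $\eta_{\rho(S)}(s)$ in \eqref{E:eta.rho.def} is finite and converges for every $s$. Hence I would reduce the claim to showing that the nonzero eigenvalues of $\rho(S)$ occur in pairs $\pm\lambda$ with equal multiplicities; that already forces $\eta_{\rho(S)}(s)$ to vanish identically and, in particular, $\eta(\rho(S))=\eta_{\rho(S)}(0)=0$.

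The key observation I would use is that $\rho(S)$ is, up to the factor $\mathbf i$, a \emph{real} operator. Recall from \eqref{E:rep.scalar} that $\rho(X_1)=2\pi\mathbf i\alpha$ and $\rho(X_2)=2\pi\mathbf i\beta$ are purely imaginary while $\rho(X_3)=\rho(X_4)=\rho(X_5)=0$. Every entry of the matrix $D_2$ in \eqref{E:D2} is a real homogeneous expression of Heisenberg order two in $X_1,X_2,X_3$; under $\rho$ the $X_3$ terms die, and each surviving monomial is a product of exactly two of the purely imaginary scalars $\rho(X_1),\rho(X_2)$, hence real. Therefore $\rho(D_2)$ is a real matrix, and since $\star_3$ has real entries with respect to the chosen real bases of $H^2(\goe)$ and $H^3(\goe)$, the product $\star_3\rho(D_2)$ is real as well. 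Consequently $\overline{\rho(S)}=\overline{\mathbf i\star_3\rho(D_2)}=-\mathbf i\star_3\rho(D_2)=-\rho(S)$. This mirrors exactly the global mechanism invoked in Proposition~\ref{P:loc.quant}, where the untwisted $\star_3D_2$ is ``skew-adjoint and real.''

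I would then let $J$ denote complex conjugation on $H^2(\goe)\otimes\mathbb C$ with respect to the real form $H^2(\goe)$, an antilinear involution. The relation $\overline{\rho(S)}=-\rho(S)$ says precisely that $J\rho(S)=-\rho(S)J$. Hence if $\rho(S)v=\lambda v$ with $v\neq0$, then $\rho(S)(Jv)=-J\rho(S)v=-\lambda\,Jv$, using that $\lambda$ is real. As $J$ is a bijective antilinear involution it maps the $\lambda$-eigenspace antilinearly and isomorphically onto the $(-\lambda)$-eigenspace, so these have equal dimension. Thus $\spec_*(\rho(S))$ is symmetric about the origin with matching multiplicities, and $\eta_{\rho(S)}(s)=0$ for all $s$.

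There is essentially no hard step here: the whole content is the realness of $\rho(D_2)$, which follows from the even Heisenberg degree of the entries of $D_2$ together with the scalar representation killing $X_3,X_4,X_5$ and sending $X_1,X_2$ to imaginary scalars. The only point I would treat with minor care is that the self-adjointness of $S$ is taken with respect to the inner product $h_2$, which is nonetheless a real, symmetric, positive definite form and so compatible with the real structure $J$ above. As a consistency check one may instead compute directly that $\tr\rho(S)=0$ (the diagonal of $\star_3\rho(D_2)$ vanishes) and $\det\rho(D_2)=0$, whence the three eigenvalues of the self-adjoint matrix $\rho(S)$ are $0,\mu,-\mu$; this reproduces the symmetry by a purely computational route.
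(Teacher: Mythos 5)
Your proposal is correct, and its main line is genuinely different from the paper's. The paper also reduces the lemma to spectral symmetry of the $3\times 3$ matrix $\rho(S)$, but establishes that symmetry by explicit computation: using \eqref{E:Aut.gr} and the equivariance \eqref{E:rho.S.phi} it first normalizes $g$ to the form \eqref{E:gbg} with $X_1,\dotsc,X_5$ positively oriented, then writes out $\rho(D_2)$ and $\rho(S)$ via \eqref{E:D2} and \eqref{E:star3}, and observes that the resulting matrix has vanishing trace and determinant, so its (real) eigenvalues are $0,\mu,-\mu$ --- which is precisely your closing ``consistency check.'' Your primary argument instead is structural: every entry of $D_2$ is sent by $\rho_{\alpha,\beta}$ to a product of two purely imaginary scalars (the $X_3$ terms dying), so $\rho(D_2)$ is real; $\star_3$ is real because it is the complexification of a Hodge star defined on the real cohomology $H^3(\goe)$; hence $\rho(S)=\mathbf i\star_3\rho(D_2)$ anticommutes with the conjugation $J$, and since $\rho(S)$ is self-adjoint with real eigenvalues, $J$ pairs the $\lambda$- and $(-\lambda)$-eigenspaces with equal multiplicities. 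This route buys you independence of any normalization --- you never invoke \eqref{E:rho.S.phi}, \eqref{E:gbg}, or the explicit formula \eqref{E:star3} --- and it transparently mirrors the mechanism used for $S_{\triv}$ in Proposition~\ref{P:loc.quant}, which you correctly flag. What the paper's computation buys instead is the explicit matrix $\rho(S)$ itself; for the purposes of the lemma either route suffices, and both correctly conclude that the finite sum defining $\eta_{\rho(S)}(s)$ vanishes identically, so in particular $\eta(\rho(S))=0$.
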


	\begin{proof}
		Combining \eqref{E:D2} and \eqref{E:rep.scalar} we get
		\[
			\rho(D_2)=-4\pi^2\begin{pmatrix}-\alpha\beta&\alpha^2/\sqrt2&0\\-\beta^2/\sqrt2&0&\alpha^2/\sqrt2\\0&-\beta^2/\sqrt2&\alpha\beta\end{pmatrix}.
		\]
		In view of \eqref{E:Aut.gr} and \eqref{E:rho.S.phi} we may assume that the metric takes the form in \eqref{E:gbg} and that $X_1,\dotsc,X_5$ is positively oriented. 
		Using \eqref{E:star3} and \eqref{E:SG.def} we obtain
		\[
			\rho(S)=\frac{4\pi^2\mathbf i}{\sqrt{g_{33}}}
			\begin{pmatrix}
				0&\sqrt{\frac{g_{44}}{g_{55}}}\cdot\frac{\beta^2}{\sqrt2}&-\sqrt{\frac{g_{44}}{g_{55}}}\cdot\alpha\beta\\
				-\frac{\sqrt{g_{44}g_{55}}}{(g_{44}+g_{55})/2}\cdot\frac{\beta^2}{\sqrt2}&0&\frac{\sqrt{g_{44}g_{55}}}{(g_{44}+g_{55})/2}\cdot\frac{\alpha^2}{\sqrt2}\\
				\sqrt{\frac{g_{55}}{g_{44}}}\cdot\alpha\beta&-\sqrt{\frac{g_{55}}{g_{44}}}\cdot\frac{\alpha^2}{\sqrt2}&0
			\end{pmatrix}.
		\]
		This matrix has vanishing trace and determinant.
		Hence, its spectrum is symmetrical about zero.
		Therefore, $\eta_{\rho(S)}(s)=0$ and $\eta(\rho(S))=0$.
	\end{proof}

	Before we move on to Schr\"odinger and generic representations we give a 

	\begin{proof}[{Proof of Lemma~\ref{L:tetaa}}]
		For $m=0,1,2,\dotsc$ and $a>-\lambda_m$ we put 
		\[
			\zeta_m(s,a)=\sum_{n=m}^\infty(\lambda_n+a)^{-s}
		\]
		with $\lambda_n$ as in \eqref{E:lambdan}.
		Then, cf.~\eqref{E:tetaa.def},
		\begin{multline}\label{E:tetam}
			\tilde\eta(s,a)
			=\sum_{n=0}^{m-1}\sign(a+\lambda_n)|a+\lambda_n|^{-s}
			+\sum_{n=0}^{m-1}\sign(a-\lambda_n)|a-\lambda_n|^{-s}
			\\+\zeta_m(s,a)-\zeta_m(s,-a)
		\end{multline}
		for $|a|<\lambda_m$.

		Putting 
		\begin{equation}\label{E:zetam.def}
			\zeta_m(s):=\zeta_m(s,0)=\sum_{n=m}^\infty\lambda_n^{-s}
		\end{equation}
		we have, for $a>-\lambda_m$ and $L\in\mathbb N_0$,
		\begin{equation}\label{E:zetasa}
			\zeta_m(s,a)=\sum_{l=0}^L\binom{-s}la^l\zeta_m(s+l)+h_{m,a,L}(s)
		\end{equation}
		where $h_{m,a,L}(s)$ is holomorphic for $\Re s>-L$ and 
		\begin{equation}\label{E:ha.values}
			h_{m,a,L}(0)=h_{m,a,L}(-1)=\cdots=h_{m,a,L}(-L+1)=0.
		\end{equation}
		Indeed, $\lambda_n=O(n)$ as $n\to\infty$ and 
		\begin{multline*}
			h_{m,a,L}(s)
			=\zeta_m(s,a)-\sum_{l=0}^L\binom{-s}la^l\zeta_m(s+l)
			\\
			=\sum_{n=m}^\infty\lambda_n^{-s}\underbrace{\left(\left(1+\frac a{\lambda_n}\right)^{-s}
			-\sum_{l=0}^L\binom{-s}l\left(\frac a{\lambda_n}\right)^l\right)}_{O\bigl(s(s+1)\cdots(s+L)\cdot\lambda_n^{-L-1}\bigr)}
		\end{multline*}
		where the latter estimate follows from
		\begin{equation}\label{E:zs}
			(1+z)^{-s}=\sum_{l=0}^L\binom{-s}lz^l+O\Bigl(s(s+1)\cdots(s+L)z^{L+1}\Bigr)
		\end{equation}
		as $z\to0$, uniformly for $s$ in compact subsets.

		Furthermore,
		\begin{equation}\label{E:zetas}
			\zeta_0(s)=2^{s/2}\sum_{l=0}^L\binom{-s/2}l\left(\frac98\right)^l\bigl(1-2^{-s-2l}\bigr)\zeta_\Riem(s+2l)+h_L(s)
		\end{equation}
		where $\zeta_\Riem(s)=\sum_{n=1}^\infty n^{-s}$ denotes the Riemann zeta function, $h_L(s)$ is holomorphic for $\Re s>-2L-1$ and
		\begin{equation}\label{E:h.values}
			h_L(0)=h_L(-2)=\cdots=h_L(-2L)=0.
		\end{equation}
		Indeed,
		\begin{multline*}
			h_L(s)
			=\zeta_0(s)-2^{s/2}\sum_{l=0}^L\binom{-s/2}l\left(\frac98\right)^l\bigl(1-2^{-s-2l}\bigr)\zeta_\Riem(s+2l)
			\\
			=2^{s/2}\sum_{n=0}^\infty(2n+1)^{-s}\underbrace{\left(\left(\frac{\sqrt2\cdot\lambda_n}{2n+1}\right)^{-s}
			-\sum_{l=0}^L\binom{-s/2}l\left(\frac98\right)^l(2n+1)^{-2l}\right)}_{O\bigl(s(s+2)\cdots(s+2L)\cdot(2n+1)^{-2L-2}\bigr)}
		\end{multline*}
		where the estimate follows from \eqref{E:zs} and
		\[
			\left(\frac{\sqrt2\cdot\lambda_n}{2n+1}\right)^{-s}
			=\left(1+\frac98(2n+1)^{-2}\right)^{-s/2}.
		\]

		Clearly, cf.~\eqref{E:zetam.def},
		\begin{equation}\label{E:zetam.trunc}
			\zeta_m(s)=\zeta_0(s)-\sum_{n=0}^{m-1}\lambda_n^{-s}.
		\end{equation}
		Since the Riemann zeta function has a single simple pole at $s=1$ with residue $1$, we conclude from \eqref{E:zetas} and \eqref{E:zetam.trunc} that $\zeta_m(s)$
		extends to a meromorphic function on the entire complex plane with simple poles located at $s=-2l+1$, $l\in\mathbb N_0$ and residues
		\begin{equation}\label{E:zeta.res}
			\res_{s=-2l+1}\zeta_m(s)
			=\frac{1\cdot3\cdot5\cdots(2l-1)}{\sqrt2\cdot4^l\cdot l!}\left(\frac98\right)^l
			=\frac1{\sqrt2}\binom{2l}l\left(\frac9{8\cdot8}\right)^l.
		\end{equation}
		As the Riemann zeta function vanishes at negative even integers, \eqref{E:zetas} and \eqref{E:h.values} moreover yield $\zeta_0(-2l)=0$, i.e.,
		\begin{equation}\label{E:zeta.values}
			\zeta_m(-2l)=-\sum_{n=0}^{m-1}\lambda_n^{2l},\qquad l=0,1,2,\dotsc
		\end{equation}

		Combining \eqref{E:zeta.res} with \eqref{E:zetasa} and \eqref{E:ha.values} we see that for $|a|<\lambda_m$ the function
		\begin{multline*}
			\zeta_m(s,a)-\zeta_m(s,-a)
			=2\sum_{j=0}^J\binom{-s}{2j+1}a^{2j+1}\zeta_m(s+2j+1)
			\\
			+h_{m,a,2J+1}(s)-h_{m,-a,2J+1}(s)
		\end{multline*}
		extends to a meromorphic function on the entire complex plane with simple poles located at $s=-2l$ where $l=1,2,\dotsc$ and residues
		\[
			\res_{s=-2l}\zeta_m(s,a)-\zeta_m(s,-a)
			=\sqrt2\cdot\sum_{j=0}^{l-1}\binom{2l}{2j+1}\binom{2l-2j}{l-j}\left(\frac9{8\cdot8}\right)^{l-j}a^{2j+1}
		\]
		as well as
		\[
			\zeta_m(0,a)-\zeta_m(0,-a)
			=-\sqrt2\cdot a.
		\]
		Moreover, \eqref{E:zeta.values} and the binomial theorem yield
		\[
			\zeta_m(-2l-1,a)-\zeta_m(-2l-1,-a)
			=-\sum_{n=0}^{m-1}(a+\lambda_n)^{2l+1}-\sum_{n=0}^{m-1}(a-\lambda_n)^{2l+1}
		\]
		for $l=0,1,2,\dotsc$
		Combining these with \eqref{E:tetam} we obtain Lemma~\ref{L:tetaa}.
	\end{proof}

	\begin{lemma}\label{L:eta.Schroedinger}
		Suppose $\rho=\rho_\hbar$ is a Schr\"odinger representation of $G$,
		let $g$ be a graded Euclidean inner product on $\goe$,
		let $\mathfrak o$ be an orientation of $\goe$, 
		and consider the operator $S=S^G_{g,\mathfrak o}$, cf.~\eqref{E:SG.def}.

		(a)
		Then the right hand side of \eqref{E:eta.rho.def} converges absolutely for $\Re s>1$ and $\eta_{\rho(S)}(s)$ 
		extends to a meromorphic function on the entire complex plane whose poles are all simple and can only be located at $s=1-j$ with $1\neq j\in\mathbb N_0$.
		In particular, $\eta_{\rho(S)}(s)$ is holomorphic at $s=0$.

		(b) If, moreover, $b_g$ is proportional to $g|_{\goe_{-1}}$, then 
		\begin{equation}\label{E:etas}
			\eta_{\rho(S)}(s)=\frac{\sign(\hbar)}{\mathfrak o(X_3)}\cdot\left(\frac{2\pi|\hbar|}{\sqrt{g_{33}}}\right)^{-s}\cdot\tilde\eta\bigl(s,\tfrac54\bigr)
		\end{equation}
		where $\tilde\eta(s,a)$ is the function in~\eqref{E:tetaa.def}, $g_{33}=g(X_3,X_3)$, and $\mathfrak o(X_3)=\pm1$ 
		depending on whether $X_3$ is a positively or negatively oriented basis of $\goe_{-2}$ with respect to the orientation induced by $\mathfrak o$.
		In particular, the poles only occur at negative even integers and the residues are
		\begin{multline}\label{E:etaS.rhoh-2}
			\res_{s=-2l}\eta_{\rho(S)}(s)
			=\frac{\sign(\hbar)}{\mathfrak o(X_3)}\cdot\left(\frac{2\pi\hbar}{\sqrt{g_{33}}}\right)^{2l}
			\\
			\cdot\sqrt2\cdot\sum_{j=0}^{l-1}\binom{2l}{2j+1}\binom{2l-2j}{l-j}\left(\frac9{8\cdot8}\right)^{l-j}\left(\frac54\right)^{2j+1}.
		\end{multline}
		Moreover,
		\begin{equation}\label{E:etaS.rhoh}
			\eta(\rho(S))=\eta_{\rho(S)}(0)
			=\frac{\sign(\hbar)}{\mathfrak o(X_3)}\cdot\left(2-\frac{5\sqrt2}4\right)
		\end{equation}
		as well as 
		\begin{equation}\label{E:etaS.rhoh-1}
                        \eta_{\rho(S)}(-2l-1)=0,\qquad l=0,1,2,\dotsc
		\end{equation}
	\end{lemma}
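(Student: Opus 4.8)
The plan is to handle part~(a) with the heat‑kernel/Mellin machinery of Section~\ref{S:eta}, and part~(b) by inserting the explicitly known spectrum of $\rho_\hbar(S)$ and recognizing the function $\tilde\eta(s,\tfrac54)$. For part~(a) I would apply Lemma~\ref{L:asym.rho} with the positive Rockland operator $A=\Delta$, which is homogeneous of order $2\kappa=12$ (so $\kappa=6$), and with $B=S$, homogeneous of order $r=2$. Part~(II) of that lemma then gives a small‑time expansion
\[
	\tr\bigl(\rho_\hbar(S)e^{-t\rho_\hbar(\Delta)}\bigr)\sim\sum_{j=0}^\infty d_j\,t^{(j-2)/6},\qquad t\to0,
\]
while part~(I) provides rapid decay as $t\to\infty$. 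Feeding this into the Mellin representation \eqref{E:etaS.Delta.rho} and arguing exactly as in Proposition~\ref{P:etaSs} yields absolute convergence of \eqref{E:eta.rho.def} for $\Re s>1$ together with a meromorphic continuation whose only possible poles are simple and located at $s=1-j$, $j\in\mathbb N_0$, with residue proportional to $d_j/\Gamma(\tfrac{2-j}6)$.

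The one point that needs more than bookkeeping is ruling out a pole at $s=0$, i.e.\ the vanishing of the residue coming from $j=1$; this is the main obstacle, since Lemma~\ref{L:asym.rho}(II) does not force odd coefficients to vanish (unlike the generic case). I would establish it by a deformation argument in the spirit of Propositions~\ref{P:dot.A} and \ref{P:eta.S.var}: for a smooth family $g_u$ of graded inner products, Duhamel's formula gives
\[
	\tfrac\partial{\partial u}\eta_{\rho(S_u)}(s)=-s\cdot\tr\bigl(\rho(\dot S_u)\rho(\Delta_u)^{-(s+1)/6}\bigr),
\]
and the prefactor $-s$ makes the right‑hand side regular at $s=0$, so that $\res_{s=0}\eta_{\rho(S_u)}(s)$ is independent of $u$. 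This step requires $\dim\ker\rho_\hbar(\Delta_u)$ to be constant, which holds because, as in the proof of Proposition~\ref{P:eta.S.var}, that kernel is Hodge‑isomorphic to the degree‑two cohomology of the complex $\rho_\hbar(D_\bullet)$, and the differentials $D_q$ do not depend on the metric. Since the space of graded inner products is connected and each $g$ can be joined to one with $b_g$ proportional to $g|_{\goe_{-1}}$, it then suffices to verify regularity at $s=0$ for those special metrics, which is exactly what part~(b) delivers.

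For part~(b) I would first use \eqref{E:rho.S.phi} and the principal axis theorem to reduce to $g$ of the form \eqref{E:gbg} with the extra relation $g_{44}=g_{55}$ (the proportionality hypothesis) and $X_1,\dots,X_5$ positively oriented. The key input is the spectrum of $\rho_\hbar(S)$ computed in \cite[Lemma~3]{H23a}, namely
\[
	\spec_*\bigl(\rho_\hbar(S)\bigr)=\left\{\frac{2\pi\hbar}{\sqrt{g_{33}}\,\mathfrak o(X_3)}\Bigl(\tfrac54\pm\lambda_n\Bigr):n=0,1,2,\dots\right\}
\]
with $\lambda_n$ as in \eqref{E:lambdan}. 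Since $\lambda_n=O(n)$, the series \eqref{E:eta.rho.def} converges for $\Re s>1$, and factoring out the common scale $\tfrac{2\pi|\hbar|}{\sqrt{g_{33}}}$ together with the sign $\tfrac{\sign\hbar}{\mathfrak o(X_3)}$ produces precisely $\tilde\eta(s,\tfrac54)$, giving \eqref{E:etas}.

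All remaining assertions are then read off from Lemma~\ref{L:tetaa}. The prefactor $(2\pi|\hbar|/\sqrt{g_{33}})^{-s}$ is entire and nowhere zero, so the poles and residues of $\eta_{\rho(S)}$ come entirely from $\tilde\eta$: formula \eqref{E:tetaa.res} yields the residues \eqref{E:etaS.rhoh-2}, using that at $s=-2l$ the prefactor equals $(2\pi\hbar/\sqrt{g_{33}})^{2l}$ because the exponent is even, while \eqref{E:tetaa.odd} gives the vanishing \eqref{E:etaS.rhoh-1} at negative odd integers. For the value at the origin I would combine \eqref{E:tetaa.zero} with the elementary observation that $\lambda_0=\tfrac{\sqrt{17}}4<\tfrac54<\tfrac94=\lambda_1$, so that $\#\{n:\lambda_n<\tfrac54\}=1$ and hence $\tilde\eta(0,\tfrac54)=2-\tfrac{5\sqrt2}4$, which is \eqref{E:etaS.rhoh}. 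Together with the deformation argument above, the regularity of $\eta_{\rho(S)}$ at $s=0$ for special metrics established here also completes the proof of part~(a) for arbitrary $g$.
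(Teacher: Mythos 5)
Your proposal is correct and follows essentially the same route as the paper's own proof: part~(a) via the heat-trace asymptotics of Lemma~\ref{L:asym.rho}(I)--(II) and the Mellin representation \eqref{E:etaS.Delta.rho}, with the pole at $s=0$ ruled out by the metric-deformation argument reducing to the special metrics of part~(b); and part~(b) via the reduction using \eqref{E:rho.S.phi}, the spectrum \eqref{E:spec.S.h} from \cite[Lemma~3]{H23a}, and the special values of $\tilde\eta(s,\tfrac54)$ from Lemma~\ref{L:tetaa}. Your added details (constancy of $\dim\ker\rho_\hbar(\Delta_u)$ from metric-independence of the Rumin differentials, and the explicit count $\lambda_0<\tfrac54<\lambda_1$) are exactly the computations the paper leaves implicit.
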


	\begin{proof}
		Using \eqref{E:asymp.infty} and \eqref{E:asymp.Schroedinger} we see that \eqref{E:etaS.Delta.rho} converges for $\Re s>1$ and that
		\[
			\eta_{\rho(S)}(s)
			=\frac1{\Gamma\left(\frac{s+1}6\right)}\sum_{j=0}^{J-1}\frac{6\cdot d_j}{s+j-1}+h_J(s)
		\]
		where $h_J(s)$ is holomorphic for $\Re s>1-J$.
		Hence, $\eta_{\rho(S)}(s)$ extends to a meromorphic function on the entire complex plane, its poles are all simple, and they can only by located at $s=1-j$ with $j\in\mathbb N_0$.
		To complete the proof of (a) we still have to rule out a pole at $s=0$.

		Suppose $g_u$ is a smooth curve of graded Euclidean inner products on $\goe$ and let $S_u$ and $\Delta_u$ denote the associated operator.
		As in Proposition~\ref{P:eta.S.var} we have
		\begin{equation}\label{E:eta.S.dot}
			\tfrac\partial{\partial u}\eta_{\rho(S_u)}(s)=-s\cdot\tr\left(\rho(\dot S_u)\rho(\Delta_u)^{-(s+1)/6}\right)
		\end{equation}
		and, thus, the residue of $\eta_{\rho(S_u)}(s)$ at $s=0$ is independent of $u$.
		Hence, to complete the proof of part~(a) it suffices to show that $\eta_{\rho(S)}(s)$ is regular at $s=0$ for a single graded Euclidean inner product $g$ on $\goe$.
		This will be accomplished during the second part of the proof.

		To prove part~(b) we assume that $b_g$ is proportional to $g|_{\goe_{-1}}$.
		Then the nonzero spectrum of $\rho(S)$ consists of the following eigenvalues, all of which have multiplicity one:
		\begin{equation}\label{E:spec.S.h}
			\spec_*(\rho(S)):\quad\frac{2\pi\hbar}{\mathfrak o(X_3)\sqrt{g_{33}}}\cdot\frac{5\pm\sqrt{8(2n+1)^2+9}}4,\qquad n=0,1,2,3,\dotsc
		\end{equation}
		To see this note first that for every graded automorphism $\phi\in\Aut_\grr(\goe)$ the operators 
		$\rho_\hbar(S^G_{\phi\cdot g,\phi\cdot\mathfrak o})$ and $\rho_{\hbar\cdot\det(\phi|_{\goe_{-1}})}(S^G_{g,\mathfrak o})$ are isospectral by \eqref{E:rho.S.phi} 
		and since we have a unitary equivalence of representations $\rho_\hbar\circ\phi\sim\rho_{\hbar\cdot\det(\phi|_{\goe_{-1}})}$, cf.~\cite[\S3.2]{H23a}.
		In view of \eqref{E:Aut.gr} and since $\phi(X_3)=\det(\phi|_{\goe_{-1}})X_3$ we may therefore assume w.l.o.g.\ that 
		the metric takes the form in \eqref{E:gbg} and that $X_1,\dotsc,X_5$ is positively oriented.
		If $X_1,\dotsc,X_5$ is an orthonormal basis then \eqref{E:spec.S.h} follows from \cite[Lemmas~2 and 3]{H23a}, cf.~\eqref{E:S}, \eqref{E:D2}, and \eqref{E:star3}.
		For more general $g$, such that $b_g$ is proportional to $g$, we have $g_{44}=g_{55}$, see~\eqref{E:gbg}.
		This changes $\star_3$ and, thus, $S$ by a factor of $g_{33}^{-1/2}$, see \eqref{E:star3} and \eqref{E:SG.def}.
		This completes the proof of~\eqref{E:spec.S.h}.
		Using \eqref{E:eta.rho.def} and \eqref{E:tetaa.def} we obtain the formula in \eqref{E:etas}.
		Combining this with Lemma~\ref{L:tetaa} we obtain the special values of $\eta_{\rho(S)}(s)$ stated in \eqref{E:etaS.rhoh-2}--\eqref{E:etaS.rhoh-1}.
		This concludes the proof of part~(b).
		We also conclude that $\eta_{\rho(S)}(s)$ is regular at $s=0$, thus also completing the proof of part~(a).
	\end{proof}

	\begin{lemma}\label{L:eta.generic}
		Suppose $\rho=\rho_{\lambda.\mu,\nu}$ is a generic representation of $G$,
		let $g$ be a graded Euclidean inner product on $\goe$,
		let $\mathfrak o$ be an orientation of $\goe$, 
		and consider the operator $S=S^G_{g,\mathfrak o}$, cf.~\eqref{E:SG.def}.

		(a)
		Then the right hand side of \eqref{E:eta.rho.def} converges absolutely for $\Re s>3/4$ and $\eta_{\rho(S)}(s)$ 
		extends to a meromorphic function on the entire complex plane whose poles are all simple and can only be located at $s=(3-2j)/4$ with $j\in\mathbb N_0$.
		In particular, $\eta_{\rho(S)}(s)$ is holomorphic at $s=0$.
		Moreover, this eta function vanishes at $s=0$, that is,
		\[
			\eta(\rho(S))=0.
		\]

		(b) If, moreover, $b_g$ is proportional to $g|_{\goe_{-1}}$, then $\eta_{\rho(S)}(s)=0$ for all $s$.
	\end{lemma}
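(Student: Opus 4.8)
The plan is to handle the three assertions in the order ``continuation of part~(a)'', then part~(b), then ``vanishing at $s=0$ in part~(a)'', because the last will be reduced to part~(b) by a deformation argument.

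First I would establish the meromorphic continuation in part~(a). Applying Lemma~\ref{L:asym.rho}(III) with $A=\Delta$ (homogeneous of order $2\kappa=12$, so $\kappa=6$) and $B=S$ (homogeneous of order $r=2$, cf.~\eqref{E:D2} and \eqref{E:SG.def}) yields
\[
	\tr\bigl(\rho(S)e^{-t\rho(\Delta)}\bigr)\sim\sum_{j=0}^\infty t^{(j-7)/24}a_j\qquad\text{as }t\to0,
\]
with $a_j=0$ for odd $j$, while at infinity the trace is $O(t^{-N})$ for all $N$ by Lemma~\ref{L:asym.rho}(I) since $\rho$ is nontrivial. Feeding this into the Mellin representation \eqref{E:etaS.Delta.rho} shows that the defining series \eqref{E:eta.rho.def} converges for $\Re s>3/4$ and that, exactly as in the proof of Proposition~\ref{P:etaSs}, $\eta_{\rho(S)}(s)$ continues meromorphically with at most simple poles at the points $s$ with $(s+1)/6+(j-7)/24=0$, i.e.\ $s=(3-j)/4$; as $a_j=0$ for odd $j$ these reduce to $s=(3-2j)/4$ with $j\in\mathbb N_0$. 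None of these is an integer, so $\eta_{\rho(S)}(s)$ is holomorphic at $s=0$.

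Next I would prove part~(b), whose crux is the spectral symmetry $\spec_*(\rho(S))=-\spec_*(\rho(S))$. Here I exploit the graded automorphisms $\Aut_\grr(\goe)\cong\GL_2(\mathbb R)$ through \eqref{E:rho.S.phi}, together with the fact that reversing the orientation flips the sign of $\star_3$ and hence of $S$, so that $S^G_{g,-\mathfrak o}=-S^G_{g,\mathfrak o}$, cf.~\eqref{E:star3}. When $b_g$ is proportional to $g|_{\goe_{-1}}$ one has $g_{44}=g_{55}$, cf.~\eqref{E:gbg}, whence the metric $g$ is preserved by the whole orthogonal group $\OO(2)\subseteq\GL(\goe_{-1})$. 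A rotation $R_\alpha\in\mathrm{SO}(2)$ is orientation preserving and satisfies $R_\alpha\cdot g=g$, so by \eqref{E:rho.S.phi} the operator $\rho_{\lambda,\mu,\nu}(S)$ is unitarily equivalent to $\rho_{\lambda_\alpha,\mu_\alpha,\nu_\alpha}(S)$, where $R_\alpha$ acts on the parameter $(\lambda,\mu)$ as a rotation; choosing $\alpha$ with $\lambda_\alpha=0$ reduces the claim to $\lambda=0$. For the reflection $\psi\colon X_1\mapsto X_1$, $X_2\mapsto-X_2$ (hence $X_3\mapsto-X_3$, $X_4\mapsto-X_4$, $X_5\mapsto X_5$) one checks $\rho_{\lambda,\mu,\nu}\circ\psi\cong\rho_{-\lambda,\mu,\nu}$, the equivalence being implemented by the parity operator on $L^2(\mathbb R)$; for $\lambda=0$ this stabilizes the representation. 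Since $\psi$ is orientation reversing with $\psi\cdot g=g$, \eqref{E:rho.S.phi} gives $\rho_{0,\mu,\nu}(S)\sim-\rho_{0,\mu,\nu}(S)$, so the nonzero spectrum is symmetric and $\eta_{\rho(S)}(s)=0$ for all $s$.

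Finally I would establish $\eta(\rho(S))=0$ in part~(a) for an arbitrary graded Euclidean inner product $g$. As in the derivation of \eqref{E:eta.S.dot}, for a smooth family $g_u$ one has $\tfrac\partial{\partial u}\eta_{\rho(S_u)}(s)=-s\cdot\tr\bigl(\rho(\dot S_u)\rho(\Delta_u)^{-(s+1)/6}\bigr)$. Since $\dot S_u$ is again homogeneous of order $2$, Lemma~\ref{L:asym.rho}(III) shows the trace on the right is holomorphic at $s=0$ --- the only candidate pole there would come from an odd-index coefficient, which vanishes --- so the prefactor $-s$ forces $\tfrac\partial{\partial u}\eta_{\rho(S_u)}(0)=0$. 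Thus $\eta_{\rho(S)}(0)$ is independent of $g$, and connecting $g$ to a metric with $b_g\propto g|_{\goe_{-1}}$ through the convex space of graded Euclidean inner products, then invoking part~(b), yields $\eta(\rho(S))=0$. I expect the main obstacle to be part~(b): verifying that $\psi$ really implements $\rho_{\lambda,\mu,\nu}\circ\psi\cong\rho_{-\lambda,\mu,\nu}$ via parity and tracking the induced $\OO(2)$-action on $(\lambda,\mu,\nu)$, so as to be sure that an orientation reversing element of $\OO(2)$ stabilizes the given generic representation; the rest is a routine application of the Mellin-transform machinery already in place.
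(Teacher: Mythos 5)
Your proposal is correct and follows essentially the same route as the paper: Mellin transform plus Lemma~\ref{L:asym.rho}(I),(III) for the convergence, continuation, and pole locations $s=(3-2j)/4$; the variation formula $\tfrac\partial{\partial u}\eta_{\rho(S_u)}(s)=-s\cdot\tr\bigl(\rho(\dot S_u)\rho(\Delta_u)^{-(s+1)/6}\bigr)$ to show $\eta_{\rho(S)}(0)$ is metric-independent and thereby reduce to part~(b); and an orientation-reversing, $g$-preserving graded automorphism stabilizing $\rho$ up to unitary equivalence, which via \eqref{E:rho.S.phi} and $S^G_{g,-\mathfrak o}=-S^G_{g,\mathfrak o}$ forces spectral symmetry. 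The only cosmetic difference is in part~(b), where the paper applies in one step the reflection sending $(\lambda,\mu)$ to $-(\lambda,\mu)$, citing \cite{H23a} for the equivalence $\rho\circ\phi\sim\rho$, whereas you first rotate to $\lambda=0$ and then verify the equivalence for the reflection $X_1\mapsto X_1$, $X_2\mapsto-X_2$ explicitly via the parity operator on $L^2(\mathbb R)$ --- a valid, slightly more self-contained rendering of the same $\OO(2)$-symmetry argument.
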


	\begin{proof}
		Using \eqref{E:asymp.infty} and \eqref{E:asymp.gen} we see that \eqref{E:etaS.Delta.rho} converges for $\Re s>3/4$ and that
		\[
			\eta_{\rho(S)}(s)
			=\frac1{\Gamma\left(\frac{s+1}6\right)}\sum_{j=0}^{J-1}\frac{6\cdot a_j}{s+(j-3)/4}+h_J(s)
		\]
		where $h_J(s)$ is holomorphic for $\Re s>(3-J)/4$.
		Moreover, $a_j=0$ for odd $j$.
		Hence, $\eta_{\rho(S)}(s)$ extends to a meromorphic function on the entire complex plane, its poles are all simple, and they can only by located at $s=(3-2j)/4$ with $j\in\mathbb N_0$.
		In particular, $\eta_{\rho(S)}(s)$ is regular at $s=0$.
		To complete the proof of part~(a) it remains to show $\eta_{\rho(S)}(0)=0$.

		We will next show that $\eta_{\rho(S)}(0)$ is independent of $g$.
		To this end, suppose $g_u$ is a smooth curve of graded Euclidean metrics on $\goe$ and let $S_u$ and $\Delta_u$ denote the associated operators.
		As in Proposition~\ref{P:eta.S.var}, cf.~\eqref{E:ddu.eta.Su}, we have
		\begin{equation}\label{E:eta.dot.rho}
			\tfrac\partial{\partial u}\eta_{\rho(S_u)}(s)
			=\frac{-s}{\Gamma\left(\frac{s+1}6\right)}\int_0^\infty t^{(s+1)/6}\tr\left(\rho(\dot S_u)e^{-t\rho(\Delta_u)}\right)\frac{dt}t
		\end{equation}
		where $\dot S_u:=\frac\partial{\partial u}S_u$.
		Using the asymptotic expansion in Lemma~\ref{L:asym.rho}(III) we conclude
		\begin{equation}\label{E:Avarg}
			\tfrac\partial{\partial u}\eta_{\rho(S_u)}(0)=0.
		\end{equation}
		This shows that $\eta_{\rho(S)}(0)$ does not depend on the Euclidean metric $g$.
		Hence, in order to complete the proof of part~(a) it suffices to show $\eta_{\rho(S)}(0)=0$ for a single graded Euclidean inner product $g$ on $\goe$.
		This will be accomplished during the second part of the proof.

		To prove part (b) we now suppose that $b_g$ is proportional to $g|_{\goe_{-1}}$.
		Then \eqref{E:Aut.gr} restricts to an isomorphism
		\begin{equation}\label{E:Aut.gr.g}
			\Aut_\grr(\goe;g)=\OO\bigl(\goe_{-1};g|_{\goe_{-1}}\bigr)\cong\OO_2(\mathbb R).
		\end{equation}
		Hence, the orthogonal reflection in $\mathbb R^2$ mapping $(\lambda,\mu)$ to $-(\lambda,\mu)$ provides an orientation reversing $\phi\in\Aut_\grr(\goe;g)$ 
		such that the representations $\rho\circ\phi$ and $\rho$ are unitarily equivalent, cf.~\cite[\S3.2]{H23a}.
		In particular, $(\rho\circ\phi)(S)$ and $\rho(S)$ are isospectral. 
		On the other hand, $(\rho\circ\phi)(S)=\rho(-S)=-\rho(S)$ according to \eqref{E:rho.S.phi} as $S^G_{\phi\cdot g,\phi\cdot\mathfrak o}=S^G_{g,-\mathfrak o}=-S^G_{g,\mathfrak o}$.
		This shows that the spectrum of $\rho(S)$ is symmetrical about zero, whence $\eta_{\rho(S)}(s)=0$.
	\end{proof}

\section{The eta invariant of (2,3,5) nilmanifolds}\label{S:nilmf}

	In this section we provide proofs of Theorems~\ref{T:eta.nilmf.entire} and \ref{T:eta.nilmf.values} formulated in the introduction.
	Moreover, we show that the eta invariant of the odd signature operator on a Riemannian (2,3,5) nilmanifold vanishes, cf.~Proposition~\ref{P:APS235} below.

	Recall that $G$ denotes the simply connected nilpotent Lie group with Lie algebra $\goe$, cf.~\eqref{E:goe.intro} and \eqref{E:brackets.intro}. 
	Suppose $\Gamma$ is a lattice in $G$ and consider the closed nilmanifold $M=\Gamma{\setminus}G$.
	Recall that the left invariant (2,3,5) distribution $\mathcal D_G$ on $G$ spanned by $X_1$ and $X_2$ descends to a (2,3,5) distribution on the nilmanifold $M$ denoted by $\mathcal D_\Gamma$.
	Let $g$ be a graded Euclidean inner product on $\goe$ and recall that the corresponding left invariant fiberwise graded Euclidean inner product $g_G$ on $\mathfrak tG$ 
	descends to a fiberwise graded Euclidean inner product on $\mathfrak tM$ denote by $g_\Gamma$.
	Let $\mathfrak o$ be an orientation of $\goe$ and recall that the corresponding orientation $\mathfrak o_G$ of $G$ descends to an orientation on $M$ denoted by $\mathfrak o_\Gamma$.
	
	\begin{proof}[{Proof of Theorem~\ref{T:eta.nilmf.values}}]
		Suppose $\chi\colon\Gamma\to U(1)$ is a unitary character.
		The space of $L^2$-sections of the associated unitary flat line bundle $F_\chi=G\times_\chi\mathbb C$ 
		over $M=\Gamma{\setminus}G$ decomposes into a countable direct sum of irreducible unitary $G$-representations,
		\[
			\Gamma\bigl(F_\chi\bigr)=\bigoplus_\rho m(\rho)\cdot\rho.
		\]
		The multiplicities $m(\rho)$ appearing in this decomposition are known explicitly through a formula 
		due to Howe \cite[Thm~1]{H71} and Richardson \cite[Thm~4.5]{R71}, cf.~\cite[\S3]{H23b}.
		This gives rise to a decomposition of the operator $S=S^{\Gamma{\setminus}G,F_\chi}_{\mathcal D_\Gamma,g_\Gamma,\mathfrak o_\Gamma}$, 
		\[
			S=\bigoplus_\rho m(\rho)\cdot\rho(S^G_{g,\mathfrak o}),
		\]
		cf.~\eqref{E:S} and \eqref{E:SG.def}.
		The eta function decomposes accordingly, 
		\[
			\eta_S(s)=\sum_\rho m(\rho)\cdot\eta_{\rho(S^G_{g,\mathfrak o})}(s).
		\]
		Recall that $b_g$ is assumed to be proportional to $g|_{\goe_{-1}}$.
		Hence, in view of Lemma~\ref{L:eta.scalar} and Lemma~\ref{L:eta.generic}(b) only the Schr\"odinger representations contribute, i.e.,
		\begin{equation}\label{E:eta.deco}
			\eta_S(s)=\sum_\hbar m(\rho_\hbar)\cdot\eta_{\rho_\hbar(S^G_{g,\mathfrak o})}(s).
		\end{equation}

		If $\chi|_{\Gamma\cap C}$ is nontrivial, then no Schr\"odinger representation appears according to \cite[Lemma~5(II)]{H23b} 
		and we obtain $\eta_S(s)=0$, whence part (a) of the theorem.

		Assume from now on that $\chi|_{\Gamma\cap C}$ is trivial.
		Then the multiplicity of the Schr\"o\-din\-ger representation $\rho_\hbar$ is
		\begin{equation}\label{E:multi.rhoh}
			m(\rho_\hbar)=
			\begin{cases}
				|c+rn|&\text{if $\hbar=(c+rn)\frac{X_3/r}\gamma$ with $n\in\mathbb Z$, and}
				\\
				0&\text{otherwise.}
			\end{cases}
		\end{equation}
		This follows immediately from \cite[Lemma~5(II)]{H23b} if $\Gamma$ has the form considered there.
		In view of \cite[Lemma~2]{H23b} every lattice is of this form, up to a not necessarily graded automorphism $\phi\in\Aut(G)$.
		For general lattices the formula in \eqref{E:multi.rhoh} thus follows from the unitary equivalence of representations 
		$\rho_\hbar\circ\phi\sim\rho_{\hbar\cdot\det(\phi|_{\goe_{-1}})}$ which holds for all automorphisms, graded or not, cf.~\cite[\S3.2]{H23a}.
		
		Combining \eqref{E:eta.deco} with \eqref{E:multi.rhoh} and \eqref{E:etas} we obtain
		\begin{equation}\label{E:etaS.expl}
			\eta_S(s)
			=\left(\frac{2\pi/r}{\sqrt{g(\gamma,\gamma)}}\right)^{-s}
			\cdot\tilde\eta\bigl(s,\tfrac54\bigr)
			\cdot\sum_{\substack{n=-\infty\\c+rn\neq0}}^\infty\sign(c+rn)|c+rn|^{-s+1}.
		\end{equation}
		If $\chi|_{\Gamma\cap[G,G]}$ is trivial, then $c=0$ and the summands in \eqref{E:etaS.expl} cancel in pairs, whence part (b) of the theorem.
		
		To see part~(c) we now assume that $\chi|_{\Gamma\cap[G,G]}$ is nontrivial, hence $\frac cr\notin\mathbb Z$.
		Using \eqref{E:eta.Hurw} the formula in \eqref{E:etaS.expl} becomes
		\begin{equation}\label{E:eta.S.formula}
			\eta_S(s)
			=r
			\cdot\left(\frac{2\pi}{\sqrt{g(\gamma,\gamma)}}\right)^{-s}
			\cdot\tilde\eta\bigl(s,\tfrac54\bigr)
			\cdot\eta_\Hurw\bigl(s-1,\tfrac cr\bigr),
		\end{equation}
		whence \eqref{E:eta.S.formula.intro}.
		Combining this with \eqref{E:tetaa.odd} we obtain 
		\[
			\eta_S(-2l-1)=0,\qquad l=0,1,2,\dotsc
		\] 
		anew, cf.~\eqref{E:eta.S.odd}.
		Recall \cite[\href{https://dlmf.nist.gov/25.11.E14}{Eq.~(25.11.14)}]{NIST:DLMF} that 
		\[
			\zeta_\Hurw(-n,a)=-\frac{B_{n+1}(a)}{n+1},\qquad n=0,1,2,\dots
		\]
		and $B_n(1-a)=(-1)^nB_n(a)$ where $B_n(a)$ denotes the $n$-th Bernoulli polynomial \cite[\href{https://dlmf.nist.gov/24.2.E3}{Eq.~(24.2.3)}]{NIST:DLMF}. 
		In view of \eqref{E:eta.zeta.Hurw} we therefore have
		\begin{equation}\label{E:teta.Hurw.odd}
			\eta_\Hurw(-n,a)=0,\qquad n=1,3,5,\dotsc
		\end{equation}
		Combining this with \eqref{E:eta.S.formula} and \eqref{E:tetaa.zero} we obtain
		\[
			\eta_S(0)=0.
		\]
		Moreover, for $l=1,2,3,\dotsc$
		\begin{equation}\label{E:eta.S.2l.pre}
			\eta_S(-2l)
			=r\cdot\left(\frac{2\pi}{\sqrt{g(\gamma,\gamma)}}\right)^{2l}
			\cdot\res_{s=-2l}\tilde\eta\bigl(s,\tfrac54\bigr)
			\cdot\eta'_\Hurw\bigl(-2l-1,\tfrac cr\bigr).
		\end{equation}

		Using the relation between the polylogarithm $\Li_s(z)=\sum_{n=1}^\infty\frac{z^n}{n^s}$ and 
		the Hurwitz zeta function in \cite[\href{https://dlmf.nist.gov/25.12.E13}{Eq.~(25.12.13)}]{NIST:DLMF} we obtain from \eqref{E:eta.zeta.Hurw}
		\begin{equation}\label{E:eta.HUrw.Lis}
			\eta_\Hurw(1-s,a)=-2\mathbf i(2\pi)^{-s}\sin(\pi s/2)\Gamma(s)\left(\Li_s(e^{2\pi\mathbf ia})-\Li_s(e^{-2\pi\mathbf ia})\right).
		\end{equation}
		Differentiating at $s=2l+2$ and using $\overline{\Li_s(e^{2\pi\mathbf ia})}=\Li_{\bar s}(e^{-2\pi\mathbf ia})$ we get
		\begin{equation}\label{E:eta.dHurw.odd}
			\eta'_\Hurw(-2l-1,a)=(-1)^l(2\pi)^{-2l-1}(2l+1)!\cdot\Im\Li_{2l+2}(e^{2\pi\mathbf i a}).
		\end{equation}
		Combining this with \eqref{E:eta.S.2l.pre} and \eqref{E:tetaa.res} we obtain the formula in~\eqref{E:eta.S.2l}.
	\end{proof}

	\begin{proof}[{Proof of Theorem~\ref{T:eta.nilmf.entire}}]
		Recall that every unitary flat vector bundle $F$ over $\Gamma{\setminus}G$ is associated to a unitary representation $\chi\colon\Gamma\to U(N)$.
		Clearly, the eta function behaves additively with respect to this representation.
		Indeed, $F_{\chi_1\oplus\chi_2}=F_{\chi_1}\oplus F_{\chi_2}$ and thus
		\[
			S_{\mathcal D_\Gamma,g_\Gamma,\mathfrak o_\Gamma}^{\Gamma{\setminus}G,F_{\chi_1\oplus\chi_2}}
			=S_{\mathcal D_\Gamma,g_\Gamma,\mathfrak o_\Gamma}^{\Gamma{\setminus}G,F_{\chi_1}}
			\oplus S_{\mathcal D_\Gamma,g_\Gamma,\mathfrak o_\Gamma}^{\Gamma{\setminus}G,F_{\chi_2}}
		\]
		for any two representations $\chi_i\colon\Gamma\to U(N_i)$, cf.~\cite[Lemma~4.3(a)]{H22}.
		In particular,
		\[
			\eta_{F_{\chi_1\oplus\chi_2}}(\Gamma{\setminus}G,\mathcal D_\Gamma,\mathfrak o_\Gamma)
			=\eta_{F_{\chi_1}}(\Gamma{\setminus}G,\mathcal D_\Gamma,\mathfrak o_\Gamma)
			+\eta_{F_{\chi_2}}(\Gamma{\setminus}G,\mathcal D_\Gamma,\mathfrak o_\Gamma).
		\]
		W.l.o.g.\ we may therefore assume $\chi$ to be irreducible.
		In this case there exists a sublattice $\tilde\Gamma$ of finite index $N$ in $\Gamma$ 
		and a character $\tilde\chi\colon\tilde\Gamma\to U(1)$ such that $\chi$ is isomorphic to the representation induced from $\tilde\chi$, see \cite[Lemma~1]{B73}.
		In this case the Rumin complexes associated with $\chi$ and $\tilde\chi$ are essentially the same, whence
		\[
			S_{\mathcal D_\Gamma,g_\Gamma,\mathfrak o_\Gamma}^{\Gamma{\setminus}G,F_\chi}
			=S_{\mathcal D_{\tilde\Gamma},g_{\tilde\Gamma},\mathfrak o_{\tilde\Gamma}}^{\tilde\Gamma{\setminus}G,F_{\tilde\chi}}
		\]
		cf.~\cite[Lemma~4.3(b)]{H22}.
		In particular, 
		\[
			\eta_{F_\chi}(\Gamma{\setminus}G,\mathcal D_\Gamma,\mathfrak o_\Gamma)
			=\eta_{F_{\tilde\chi}}(\tilde\Gamma{\setminus}G,\mathcal D_{\tilde\Gamma},\mathfrak o_{\tilde\Gamma}).
		\]
		W.l.o.g.\ it thus suffices to consider unitary characters $\chi\colon\Gamma\to U(1)$, i.e., $N=1$.
		Recall from Theorem~\ref{T:eta.S.hinv} that the eta invariant can be computed using any graded Euclidean inner product.
		Hence, it remains to show $\eta_S(0)=0$ for a single graded Euclidean inner product $g$ on $\goe$ and all unitary characters $\chi\colon\Gamma\to U(1)$.
		This case, however, is covered by Theorem~\ref{T:eta.nilmf.values}.
	\end{proof}

	We close this section with the following Riemannian analogue.

	\begin{proposition}\label{P:APS235}
		The eta invariant of the odd signature operator on a (2,3,5) nilmanifold, with respect to any Riemannian metric and twisted by any unitary flat vector bundle, vanishes.
	\end{proposition}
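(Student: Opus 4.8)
The plan is to reduce the statement to the classical Atiyah--Patodi--Singer rho invariant of the odd signature operator $B$ acting on $\Omega^{\mathrm{ev}}(M)\otimes F$, and then to exploit the two special features of the situation: that $\dim M=5\equiv1\pmod4$, and that $\pi_1(M)=\Gamma$ is torsion-free nilpotent, hence amenable.

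First I would reduce to the case of a unitary character $\chi\colon\Gamma\to U(1)$, exactly as in the proof of Theorem~\ref{T:eta.nilmf.entire}. The eta invariant is additive under $F=F_{\chi_1}\oplus F_{\chi_2}$, and by \cite{B73} every irreducible unitary representation of $\Gamma$ is induced from a character $\tilde\chi$ on a finite index sublattice $\tilde\Gamma$. Since $\tilde\Gamma\backslash G\to\Gamma\backslash G$ is a finite covering of $(2,3,5)$ nilmanifolds and the signature operator is natural under coverings, the operator twisted by the induced bundle is the transfer of the signature operator on $\tilde\Gamma\backslash G$ twisted by $\tilde\chi$; the spectra, and hence the eta invariants, agree. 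This leaves us with $\eta_\chi(M)=0$ for unitary characters $\chi$.

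Next I would bring in two inputs from Atiyah--Patodi--Singer. Because $5\equiv1\pmod4$, the untwisted operator $B=B^{F_{\triv}}$ is $\mathbf i$ times a real skew-adjoint operator, so its spectrum is symmetric about $0$ and its eta invariant vanishes; this is the same reality argument used in Proposition~\ref{P:loc.quant}, cf.~\cite[Remark~3a]{APS75}. By \cite{APS75b} the metric variation of $\eta_\chi$ is a local quantity independent of $\chi$, so the reduced rho invariant $\rho_\chi(M):=\eta_\chi(M)-\eta_{\triv}(M)=\eta_\chi(M)$ is independent of the Riemannian metric and is a smooth invariant of the pair $(M,\chi)$. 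It therefore suffices to show $\rho_\chi(M)=0$, and by metric-independence we may compute it with the left-invariant metric $g_\Gamma$.

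The main obstacle is precisely this vanishing of the rho invariant. Here I would use the signature theorem of \cite{APS75b} together with the nilpotent structure. Since $\Omega_5^{SO}(\mathbb{CP}^\infty)\otimes\mathbb Q=0$, some nonzero multiple $N\cdot(M,\chi)$ bounds a pair $(W,\tilde\chi)$ over which the flat line bundle extends; applying the signature theorem to $W$ and cancelling the rank-one $L$-class contributions against the untwisted case yields $N\rho_\chi(M)=\sign_{\tilde\chi}(W)-\sign(W)$, the defect between the twisted and untwisted middle-dimensional signatures of $W$. The hard part is to see that this flat unitary signature defect vanishes for our $\Gamma$. This I would extract from the iterated central structure: the centre $C$ of $G$ descends to a free isometric torus action $T^2=C/(\Gamma\cap C)$ on $(M,g_\Gamma)$, since a left-invariant metric is automatically right $C$-invariant as $C$ is central, exhibiting $M$ as an iterated principal circle bundle whose base is a lower-step nilmanifold. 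Combining metric-independence with the adiabatic limit along these fibrations, each flat unitary coefficient pairs against the Euler classes of the abelian circle fibres with vanishing signature contribution, so the defect descends to the base and, by induction on the nilpotency length down to the torus, to zero; this is the finite-dimensional shadow of the vanishing of the $L^2$-rho invariant for the amenable group $\Gamma$. The trichotomy of \eqref{E:ses} --- $\chi|_{\Gamma\cap C}$ nontrivial, $\chi|_{\Gamma\cap[G,G]}$ trivial, and the remaining case --- organizes the induction just as in Theorem~\ref{T:eta.nilmf.values}, and I expect verifying the vanishing of the signature defect at each stage to be the principal technical difficulty.
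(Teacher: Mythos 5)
Your opening reductions are sound and coincide with the paper's: additivity plus \cite{B73} reduces the statement to unitary characters $\chi\colon\Gamma\to U(1)$, the untwisted eta function vanishes identically in dimension five by the reality argument of \cite[Remark~3a]{APS75}, and by \cite[Thm~2.4]{APS75b} the twisted eta invariant is metric independent and equals the rho invariant $\rho_\chi(M)$. The problems begin with your bordism step. First, it is set up over the wrong space: $\mathbb{CP}^\infty$ classifies \emph{topological} complex line bundles, whereas the identity $N\rho_\chi(M)=\sign_{\tilde\chi}(W)-\sign(W)$ requires the \emph{flat} structure to extend over $W$, i.e.\ a character $\tilde\chi$ of $\pi_1(W)$ restricting to $\chi$; this is a bordism condition over $K(\img\chi,1)$ with $\img\chi$ discrete, not over $\mathbb{CP}^\infty$. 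A null-bordism over $\mathbb{CP}^\infty$ only extends the bundle topologically, and the Atiyah--Patodi--Singer index theorem then produces curvature correction terms rather than the flat signature defect. This error is repairable (here $\img\chi$ is finitely generated abelian of rank at most two, so $H_5(B\img\chi;\mathbb Q)=0$, and $p_1(M)=0$ since $M$ is parallelizable, so $(M,\chi)$ is indeed rationally null-bordant in the flat category), but as written the step is a non sequitur.

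The fatal gap is the last step: the vanishing of the flat signature defect \emph{is} the content of the proposition, and your proposal asserts it rather than proves it. The appeal to the vanishing of the $L^2$-rho invariant for amenable groups cannot be the underlying reason: the Cheeger--Gromov invariant concerns the regular representation, and its vanishing does not imply $\rho_\chi=0$ for individual characters. Indeed $\rho_\chi(S^1)\neq0$ for generic characters $\chi$ (the spectrum $\{2\pi(n+a)\}_{n\in\mathbb Z}$ gives $\rho_\chi(S^1)=1-2a$), while $S^1$ is a nilmanifold of dimension $\equiv1\pmod4$ with torsion-free amenable fundamental group --- so both special features you cite are insufficient, and the bottom cases of your proposed induction on nilpotency length already fail. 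The adiabatic-limit heuristic likewise needs eta forms and integer spectral-sequence contributions made explicit, which is a substantial project, not a step. The paper closes this gap quite differently: it first proves $\rho_{\chi\cdot\tau}(M)=\rho_\chi(M)$ for every character $\tau$ factoring through the free quotient $p(\Gamma)\cong\mathbb Z^2$ of \eqref{E:ses.ab}, via the Chern--Simons transgression formula \cite[Thm~3.11.6]{G95}, where parallelizability of $M$, triviality of $\Pi^+(\sigma)$, and $\rk F_\chi=1$ force the integrand to vanish; this reduces everything to characters pulled back from the \emph{finite} group $A=\frac{\Gamma\cap[G,G]}{[\Gamma,\Gamma]}$, so that $\tilde M\to M$ is a finite $A$-covering and \cite[Thm~2.9]{APS75b} applies. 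The defect $\sigma(g,\tilde M)=L(g,W)-\sign(g,W)$ of \cite[Thm~7.4]{AS68} is then shown to vanish on the explicit disc bundle $W=\tilde M\times_{S^1}D^2$ over the four-dimensional Engel nilmanifold $N$, using Nomizu's theorem \cite{N54}: the cup product $H^1(N;\mathbb R)\otimes H^2(N;\mathbb R)\to H^3(N;\mathbb R)$ vanishes (whence $\sign(g,W)=0$), and $p_1(N)=0$, $e\cup e=0$ (whence $L(g,W)=0$). Without a replacement for this computation, or for the reduction that makes the covering finite, your outline does not close.
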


	\begin{proof}
		On 5-dimensional manifolds the eta function of the untwisted odd signature operator vanishes identically, cf.~\cite[Remark~3a on p.~61]{APS75}.
		Hence, in this dimension, the eta invariant of the odd signature operator, twisted by any unitary flat vector bundle, 
		is independent of the Riemannian metric and coincides with the rho invariant, cf.~\cite[Thm~2.4]{APS75b}.

		Suppose $\Gamma\subseteq G$ is a lattice and let $M=\Gamma{\setminus}G$ denote the corresponding (2,3,5) nilmanifold.
		We have to show that the rho invariant associated with a unitary flat vector bundle $F$ over $M$ vanishes.
		Arguing as in the proof of Theorem~\ref{T:eta.nilmf.entire} above we see that 
		it suffices to consider unitary flat line bundles $F=F_\chi$ associated with unitary characters $\chi\colon\Gamma\to U(1)$.
		We will denote the rho invariant of $F_\chi$ by $\rho_\chi(M)$.
		Recall from \cite[\S2]{H23b} that the natural homomorphism $G\to G/[G,G]\cong\mathbb R^2$ gives rise to a short exact sequence
		\begin{equation}\label{E:ses.ab}
			0\to A\to\frac\Gamma{[\Gamma,\Gamma]}\to p(\Gamma)\to0
		\end{equation}
		where $A=\frac{\Gamma\cap[G,G]}{[\Gamma,\Gamma]}$ is a finite abelian group and $p(\Gamma)\cong\mathbb Z^2$.

		For every unitary character $\tau\colon\Gamma\to U(1)$ that factors through $p(\Gamma)$ we have
		\begin{equation}\label{E:rho.twist}
			\rho_{\chi\cdot\tau}(M)=\rho_\chi(M).
		\end{equation}
		To see this note first that $F_\tau$ is topologically trivial.
		Hence, $F_\chi$ and $F_{\chi\cdot\tau}=F_\chi\otimes F_\tau$ are topologically isomorphic.
		The relation between the rho invariant and the Chern--Simons invariants \cite{APS75b}, which we take from \cite[Thm~3.11.6]{G95}, reads
		\begin{equation}\label{E:int}
			\rho_{\chi\cdot\tau}(M)-\rho_\chi(M)
			=\pm\int_{SM}\Td(M)\ch(\Pi^+(\sigma))\Tch(\nabla^{\chi\cdot\tau},\nabla^\chi).
		\end{equation}
		As $M$ is parallelizable, the Todd class is trivial, $\Td(M)=1$.
		Using a Riemannian metric induced from a left invariant metric on $G$, we see that the bundle $\Pi^+(\sigma)$
		provided by the positive eigenspaces of the principal symbol \cite[\S3.8.2]{G95} is trivial, and so is its Chern character, $\ch(\Pi^+(\sigma))=1$.
		As $\rk(F_\chi)=1$ the transgressed Chern character associated with the flat connections on $F_\chi$ and $F_{\chi\cdot\tau}$ 
		is concentrated in degree one, $\Tch(\nabla^{\chi\cdot\tau},\nabla^\chi)\in\Omega^1(M)$, cf.~\cite[\S3.11.3]{G95}.
		We conclude that the integral in \eqref{E:int} vanishes, whence \eqref{E:rho.twist}.

		Since the sequence in~\eqref{E:ses.ab} splits there exists a homomorphism $\frac\Gamma{[\Gamma,\Gamma]}\to A$ that restricts to the identity on $A$.
		Composing this with the canonical projection we obtain a surjective homomorphism denoted by $\pi\colon\Gamma\to A$.
		In light of \eqref{E:rho.twist}	it suffices to show $\rho_{\alpha\circ\pi}(M)=0$ for all unitary characters $\alpha\colon A\to U(1)$.
		Note that $\tilde\Gamma:=\ker\pi$ is a normal sublattice in $\Gamma$ and let $\tilde M=\tilde\Gamma{\setminus}G$ denote the corresponding nilmanifold.
		The group $A=\Gamma/\tilde\Gamma$ acts freely (from the left) on $\tilde M$, turning the canonical projection $\tilde M\to M$ into an $A$-covering.
		In view of \cite[Thm~2.9]{APS75b} it suffices to show that Atiyah and Singer's \cite[\S7]{AS68} invariant $\sigma(g,\tilde M)$ vanishes for every nontrivial $g\in A$.
		To this end, let $Z\subseteq G$ be a central 1-parameter subgroup intersecting the lattice $\tilde\Gamma$ nontrivially.
		Then $S^1=\frac Z{\tilde\Gamma\cap Z}$ acts freely on $\tilde M$, turning the canonical projection $\tilde M\to N:=\tilde\Gamma{\setminus}G/Z$ 
		into an $A$-equivariant principal $S^1$-bundle.
		The associated disc bundle $W:=\tilde M\times_{S^1}D^2$ has boundary $\partial W=\tilde M$ and the $A$-action extends naturally from $\tilde M$ to $W$.
		Recall \cite[Thm~7.4]{AS68} that
		\[
			\sigma(g,\tilde M)=L(g,W)-\sign(g,W)
		\]
		where $\sign(g,W)$ denotes the $G$-signature of $W$, and $L(g,W)$ is a number obtained by integrating certain characteristic classes over the fixed point set of $g$ acting on $W$.
		We will complete the proof by showing that both summands on the right hand side vanish, for nontrivial $g\in A$.

		Let $e\in H^2(N;\mathbb R)$ denote the Euler class of the disc bundle $p\colon W\to N$.
		The natural map $H^3(W,\tilde M;\mathbb R)\to H^3(W;\mathbb R)$ fits into the commutative diagram:
		\begin{equation}\label{E:diag}
			\vcenter{
			\xymatrix{
				H^3(W,\tilde M;\mathbb R)\ar[r]\ar@{=}[d]_-{\textrm{Thom isom.}}&H^3(W;\mathbb R)\ar@{=}[d]^-{p^*}
				\\
				H^1(N;\mathbb R)\ar[r]^-{\cup e}&H^3(N;\mathbb R)
			}}
		\end{equation}
		Here the vertical isomorphism on the right is induced by the bundle projection $p$, 
		while the vertical isomorphism on the left is the Thom isomorphism of said disc bundle.
		We note that $N=\tilde\Gamma{\setminus}G/Z=\frac{\tilde\Gamma}{\tilde\Gamma\cap Z}{\setminus}\frac GZ$ is a 4-dimensional nilmanifold.
		By Nomizu's \cite{N54} theorem, $H^*(N;\mathbb R)=H^*(\noe;\mathbb R)$ where $\noe=\goe/\zoe$ is a nilpotent Lie algebra of Engel type, and $\zoe$ denotes the Lie algebra of $Z$.
		It is straight forward to compute the cohomology ring of $\noe$.
		This shows that the cup product $H^1(N;\mathbb R)\otimes H^2(N;\mathbb R)\to H^3(N;\mathbb R)$ vanishes.
		Hence, the upper horizontal map in~\eqref{E:diag} vanishes too.
		We conclude that $\sign(g,W)=0$, for all $g\in A$, cf.~\cite[\S2]{APS75b}.

		Consider a nontrivial $g\in A$ acting on $W$.
		If its fixed point set is nonempty then it coincides with $N$ considered as a submanifold of $W$ via the zero section of the disc bundle $W\to N$.
		In this case
		\[
			L(g,W)=\int_N\mathfrak P_g\bigl(p_1(N),e\bigr)
		\]
		where $\mathfrak P_g$ is a polynomial depending on $g$, see \cite[Thm~6.12]{AS68} or \cite[Thm~14.5]{LM89} for the general formula.
		The Pontryagin class $p_1(N)$ vanishes since $N$ is parallelizable.
		In order to prove $L(g,W)=0$ it thus suffices to show $e\cup e=0$ in $H^4(N;\mathbb R)$.
		By the exactness of the Gysin sequence $e$ generates the kernel of the map $H^2(N;\mathbb R)\to H^2(\tilde M;\mathbb R)$ induced by the bundle projection $\tilde M\to N$.
		Using Nomizu's theorem wee see that this kernel has dimension one, and $e\cup e=0$.
	\end{proof}


\section*{Funding information}
	This research was funded in whole or in part by the Austrian Science Fund (FWF) [10.55776/PAT8218224].
	For open access purposes, the author has applied a CC BY public copyright license to any author accepted manuscript version arising from this submission.

%

\end{document}